\newtheorem{lem}{Lemma}[section]
\newtheorem{theorem}[lem]{Theorem}
\newtheorem{propo}[lem]{Proposition}
\newtheorem{defi}[lem]{Definition}
\newtheorem{coro}[lem]{Corollary}
\newtheorem{rema}[lem]{Remark}
\numberwithin{equation}{section}
\def\RR{{\mathbb R}}
\def\CC{{\mathbb C}}
\def\dfrac{\displaystyle\frac}
\def\R{\mathbb{R}}
\def\epsilon{\varepsilon}
\newcommand{\abs}[1]{\lvert#1\rvert}
\title[Layer solutions for fractional Laplacians]{Nonlinear equations for fractional Laplacians II: 
existence, uniqueness, and qualitative properties of solutions}
\author{Xavier Cabr\'e}
\author{Yannick Sire}
\thanks{The first author was supported by
grants MTM2008-06349-C03-01,  MTM2011-27739-C04-01 (Spain) and 
2009SGR-345 (Catalunya). The second author is supported by the ANR project PREFERED}
\address{X. Cabr\'e: ICREA and Universitat Polit{\`e}cnica de Catalunya,
Departament de Matem{\`a}tica Aplicada I, Diagonal 647, 08028
Barcelona, Spain}
\email{xavier.cabre@upc.edu}
\address{Y. Sire: Universit\'e Paul C\'ezanne, LATP,
Facult\'e des Sciences et Techniques, Case cour A,
Avenue Escadrille Normandie-Niemen, F-13397 Marseille Cedex 20,
France, and CNRS, LATP, CMI, 39 rue F. Joliot-Curie, F-13453 Marseille Cedex
13, France}
\email{sire@cmi.univ-mrs.fr}
\begin{document}

\begin{abstract}
This paper, which is the follow-up to part~I, concerns the equation
$(-\Delta)^{s} v+G'(v)=0$ in $\mathbb{R}^{n}$,
with $s\in (0,1)$, where $(-\Delta)^{s}$ stands for the fractional
Laplacian ---the infinitesimal generator of a L\'evy process. 

When $n=1$, we prove that there exists a layer solution of the equation 
(i.e., an increasing solution with limits $\pm 1$ at $\pm \infty$)
if and only if the potential $G$ has only two absolute minima
in $[-1,1]$, located at $\pm 1$ and satisfying $G'(-1)=G'(1)=0$. 
Under the additional hypothesis $G''(-1)>0$ and $G''(1)>0$, we also establish its uniqueness and 
asymptotic behavior at infinity. 
Furthermore, we provide with a concrete, almost explicit, example
of layer solution.

For $n\geq 1$, we prove some results related to the 
one-dimensional symmetry of certain solutions ---in the spirit of a well-known
conjecture of De Giorgi for the standard Laplacian.

\end{abstract}

\maketitle

\section{Introduction}

This paper, which is a follow-up to our work \cite{CS1}, is devoted to study the 
nonlinear problem
\begin{equation}
\label{problem}
(-\Delta)^{s} v= f(v)\,\,\,\text{ in } \mathbb{R}^{n},
\end{equation}
where $s \in (0,1)$ and 
\begin{equation}\label{deflapl}
(-\Delta)^{s} v(x)=C_{n,s}\ \text{P.V.}\int_{\mathbb{R}^{n}}
\frac{v(x)-v(\overline x )}{|x-\overline x|^{n+2s}}\,d\overline  x 
\end{equation}
is the fractional Laplacian.  
In the previous integral, P.V. stands for the Cauchy principal value
and $C_{n,s}$ is a normalizing constant 
to guarantee that the symbol of the resulting operator is $|\xi|^{2s}$;
see \cite{CS1} for more details. As explained in
section~3 of \cite{CS1}, this problem is equivalent to the nonlinear 
boundary value problem 
\begin{equation}
\label{extAlpha}
\begin{cases}
\textrm{div\,}(y^{a}\,\nabla u)=0&\text{ in } \mathbb{R}^{n+1}_+\\ 
(1+a)\displaystyle{\frac{\partial u}{\partial{\nu}^{a}}}
=f(u) &\text{ on } \partial\mathbb{R}^{n+1}_+ ,
\end{cases}
\end{equation}
where $n\geq 1$, $\mathbb{R}^{n+1}_+=\{(x,y) \in \mathbb{R}^n \times \mathbb{R} : y>0\}$
is a halfspace, $\partial\mathbb{R}^{n+1}_+=\{y=0\}$,
$u=u(x,y)$ is real valued, and $$\frac{\partial u}{\partial
  {\nu}^{a}}=-\lim_{y \to 0} y^{a} \partial_y u$$
is the generalized exterior normal derivative of~$u$.
Points in $\mathbb{R}^{n}$ are denoted by 
$x=(x_1,\ldots ,x_{n})$. The parameter $a$ belongs to $(-1,1)$ 
and is related to the power of the fractional
Laplacian $(-\Delta)^s$ by $$a=1-2s. $$ Indeed, Caffarelli and Silvestre (see
\cite{cafS, CS1}) proved the following formula relating the fractional
Laplacian $(-\Delta)^s$ to the Dirichlet-to-Neumann operator: 
\begin{equation}\label{cttNeumann}
(-\Delta)^{s} \left \{u(\cdot, 0) \right \}=d_s \frac{\partial u}{\partial
  \nu^{a}} \quad\text{ in } \mathbb{R}^{n}=\partial\mathbb{R}^{n+1}_+, 
\end{equation}
where $d_s$ is a positive constant depending only on $s$. 

The aim of the present paper is to study some special bounded solutions of~\eqref{problem}. 
The solutions we consider are the so-called {\it layer solutions}, i.e., those solutions
which are monotone increasing, 
connecting  $-1$ to $1$ at $\mp\infty$, 
in one of the $x$-variables.  We focus on their existence, uniqueness,
symmetry and variational properties, as well as their asymptotic
behavior. 

In our previous paper \cite{CS1}, we proved a Modica-type estimate which allowed to derive a 
necessary condition on the nonlinearity $f$
for the existence of a layer solution in $\RR$. More precisely, 
we proved the following result.

\begin{theorem}[\cite{CS1}]\label{modthm} 
Let $a \in (-1,1)$ and $f$ any $C^{1,\gamma}(\RR)$ function, 
for some $\gamma >\max(0,a)$. Let $n=1$ and $u$ be a layer solution of \eqref{extAlpha}, 
that is, a bounded solution of \eqref{extAlpha} with $n=1$ such that $u_x(\cdot,0) >0$ 
in $\RR$ and $u(x,0)$ has limits $\pm 1$ as $x \rightarrow \pm \infty$.

Then, for
every $x\in\RR$ we have 
$\int_0^{+\infty}t^{a}|\nabla u(x,t)|^2 dt<\infty$ and the Hamiltonian equality
\begin{equation*}
(1+a)\int_0^{+\infty} \frac{1}{2} t^a \left\{u_x^2(x,t)-u_y^2(x,t)\right\} dt=G(u(x,0))-G(1). 
\end{equation*}
Furthermore, for all $y \geq 0$ and  $x \in \RR$ we have
\begin{equation*}
(1+a) \int_0^y\dfrac{1}{2} t^{a} \left\{
u_x^2(x,t)-u_y^2(x,t)\right\} dt < G(u(x,0))-G(1).
\end{equation*}
 
\end{theorem}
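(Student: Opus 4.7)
My plan is to combine regularity/decay estimates with the divergence of a well-chosen stress-energy current, in the spirit of classical Modica-Mortola identities. The key object is the ``horizontal momentum'' current
\[
T(x,y) := \Bigl(\tfrac{y^a}{2}(u_x^2-u_y^2),\ y^a u_x u_y\Bigr),
\]
which is divergence-free in $\mathbb{R}^2_+$ by a direct calculation from $\textrm{div}(y^a\nabla u)=0$.

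\emph{Integrability.} To show $\int_0^{+\infty} t^a|\nabla u(x,t)|^2\,dt<\infty$ pointwise in $x$, I split $(0,+\infty)$ into a neighbourhood of the boundary and a tail. Near $t=0$ the Neumann relation $\lim_{t\to 0}t^a u_y = -f(u(x,0))/(1+a)$, together with the boundary regularity for $\textrm{div}(t^a\nabla u)=0$ recalled from part~I, gives $t^a|\nabla u(x,t)|^2\leq C\,t^{-a}$, which is integrable since $a>-1$. For large $t$, interior Caccioppoli estimates on dyadic shells combined with the decay $\nabla u\to 0$ at spatial infinity (a consequence of boundedness of $u$) yield a summable annular energy.

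\emph{Hamiltonian identity.} Applying the divergence theorem to $T$ on the rectangle $\{\xi>x,\ 0<y<R\}$, the side $\xi=+\infty$ vanishes by the decay of $\nabla u$ and the bottom $y=0$ is converted via the boundary condition into a telescoping total $\xi$-derivative, contributing $(G(u(x,0))-G(1))/(1+a)$. After rearranging and multiplying by $1+a$, this gives
\[
(1+a)\!\int_0^R\!\tfrac{y^a}{2}(u_x^2-u_y^2)(x,y)\,dy - [G(u(x,0))-G(1)] = (1+a)R^a\!\int_x^{+\infty}\! u_\xi(\xi,R)\,u_y(\xi,R)\,d\xi.
\]
The integrability proved above, combined with Fubini, forces the right-hand side to vanish along a sequence $R_k\to\infty$, producing the Hamiltonian equality.

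\emph{Strict inequality.} At finite $R$ the strict inequality reduces to the strict negativity of $h(x) := \int_x^{+\infty} u_\xi(\xi,R)u_y(\xi,R)\,d\xi$. The Hamiltonian applied at $x\to\pm\infty$ forces $G(-1)=G(1)$, hence $h(\pm\infty)=0$. Since $u_x$ solves a linearized degenerate problem with strictly positive boundary data, the strong maximum principle gives $u_x>0$ throughout $\mathbb{R}^2_+$, so the sign of $h'(x) = -u_x(x,R)u_y(x,R)$ is driven by $u_y(\cdot,R)$. A contradiction/unique-continuation argument then yields $h<0$ everywhere: if $h$ attained its maximum $0$ at some interior $x^*$ then $u_y(x^*,R)=0$, and combining this with the degenerate PDE and the sign of $u_x$ one propagates $u_y\equiv 0$ along a horizontal line, then, by unique continuation, throughout $\mathbb{R}^2_+$, contradicting the layer behaviour $u(x,0)\to\pm 1$. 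This last step is the main obstacle: the nonlocal character of $(-\Delta)^s$ prevents a pointwise Hamiltonian, and extracting strict sign from a genuinely nonlocal tail integral without any a priori sign for $u_y$ on horizontal slices requires combining the max-principle monotonicity of $u_x$ with a unique continuation result for the degenerate operator $\textrm{div}(y^a\nabla u)=0$.
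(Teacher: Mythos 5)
First, a point of context: Theorem~\ref{modthm} is quoted from the companion paper \cite{CS1} and is not proved in the present paper, so there is no in-paper proof to compare against; I am therefore judging your argument on its own terms, against the known proof in \cite{CS1}/\cite{CSM}.

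Your integrability step and your derivation of the identity with the remainder term $(1+a)R^a\int_x^{+\infty}u_\xi u_y(\xi,R)\,d\xi$ are sound: the current $T$ is indeed divergence-free, the bottom boundary term telescopes into $G(u(x,0))-G(1)$ via $\lim_{y\to0}y^au_y=-f(u(\cdot,0))/(1+a)$, and the side at $\xi=+\infty$ vanishes. Two caveats on the equality part. The ``Fubini'' justification for killing the top term is not available from what you proved: pointwise-in-$x$ integrability of $\int_0^\infty t^a|\nabla u(x,t)|^2dt$ does not control $\int_{\{\xi>x,\,y>0\}}y^a|\nabla u|^2$, which is in fact \emph{infinite} for $s\le 1/2$ (the global $\dot H^s$ energy of a layer diverges). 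The standard fix is direct: since $u=P_s(\cdot,y)*u(\cdot,0)$ one has $u_\xi>0$ in all of $\R^2_+$, so $\int_x^\infty|u_\xi u_y|\,d\xi\le \sup_\xi|u_y(\xi,R)|\cdot(1-u(x,R))\le C/R$ by interior gradient estimates, and the remainder is $O(R^{a-1})\to0$ with no subsequence needed.

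The genuine gap is in the strict inequality. From an interior maximum of $h(x)=\int_x^{+\infty}u_\xi u_y(\xi,R)\,d\xi$ at fixed height $R$ you only extract $h'(x^*)=0$, i.e.\ $u_y(x^*,R)=0$ at a single point, and no contradiction can follow from this: $u_y$ genuinely vanishes at points, and even on whole vertical lines (for the odd explicit layer $v^t_s$ of Theorem~\ref{examen}, $u$ is odd in $x$, so $u_y(0,y)\equiv0$ while $u$ is very much a layer). The proposed propagation ``$u_y(x^*,R)=0\Rightarrow u_y\equiv0$ on a horizontal line $\Rightarrow u_y\equiv0$'' has no basis; unique continuation gives nothing from a single zero. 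The correct mechanism, which is the one in \cite{CS1,CSM}, is to maximize the deficit in \emph{both} variables: set $\tilde H(x,y):=(1+a)\int_0^y\frac12 t^a(u_x^2-u_y^2)(x,t)\,dt-\{G(u(x,0))-G(1)\}$, note $\nabla\tilde H=(1+a)\bigl(-y^au_xu_y,\ \tfrac{y^a}{2}(u_x^2-u_y^2)\bigr)$ and that $\tilde H\to0$ along $x\to\pm\infty$ and $y\to+\infty$. If $\sup\tilde H\ge0$ were attained at an interior point, \emph{both} components of $\nabla\tilde H$ would vanish there, forcing $u_xu_y=0$ and $u_x^2=u_y^2$, hence $\nabla u=0$, contradicting $u_x=P_s(\cdot,y)*u_x(\cdot,0)>0$ in $\R^2_+$; a Hopf-type analysis rules out a maximum on $\{y=0\}$. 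Your one-variable maximization at fixed $R$ discards exactly the second component of $\nabla\tilde H$ that makes the argument close.
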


In the previous theorem, the last estimate is uniform as $s$ tends to $1$, i.e.,
as $1+a$ tends to $0$. This led in \cite{CS1} to the convergence of layers, as $s \uparrow 1$, 
to a layer of $-v''=f(v)$ in $\RR$. 
In addition, using the Hamiltonian estimates of Theorem \ref{modthm}, we established 
the following necessary conditions for the existence of a layer in $\RR$. 

\begin{theorem}[\cite{CS1}]\label{necLayers}
Let $s \in (0,1)$ and $f$ any $C^{1,\gamma}(\RR)$ function, 
for some $\gamma >\max(0,1-2s)$. Assume that there exists a layer solution $v$ of  
\begin{equation}\label{eqcc}
(-\partial_{xx})^s v= f(v)\,\,\,\,\text{in }\RR ,
\end{equation}
that is, $v$ is a solution of \eqref{eqcc} satisfying 
$$v'>0 \quad \text{in }\RR \qquad\text{and}\qquad 
\lim_{x \rightarrow \pm \infty} v(x)=\pm 1. $$

Then, we have  
\begin{equation}\label{nec1}
G'(-1)=G'(1)=0 
\end{equation}
and 
\begin{equation}\label{nec2}
G>G(-1)=G(1)\,\,\,\,\text{in }(-1,1).
\end{equation}
\end{theorem}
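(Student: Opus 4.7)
\medskip
\noindent\textbf{Proof plan.} Let $u$ be the extension of $v$ satisfying \eqref{extAlpha} with $u(\cdot,0)=v$. Since $v'>0$ on $\mathbb{R}$, $u$ is a layer solution in the sense of Theorem~\ref{modthm}, so both the Hamiltonian equality and the strict monotone inequality of that theorem are at my disposal. The plan is to first prove \eqref{nec2} by a two-step squeezing argument, and then deduce \eqref{nec1} by direct analysis of the fractional Laplacian at infinity.

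\emph{Step 1: pointwise bound.} Evaluating the strict inequality of Theorem~\ref{modthm} at $y=0$ collapses the integral on the left to zero, leaving $G(v(x))>G(1)$ for every $x\in\mathbb{R}$. Since $v$ is a continuous bijection from $\mathbb{R}$ onto $(-1,1)$, this is equivalent to $G>G(1)$ on $(-1,1)$, and passing to the limit $x\to-\infty$ yields $G(-1)\geq G(1)$.

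\emph{Step 2: reverse inequality by reflection.} To upgrade this to equality, I would exploit the reflection symmetry of \eqref{eqcc}. The function $\tilde v(x):=-v(-x)$ is again a layer solution, now of $(-\partial_{xx})^{s}\tilde v=\tilde f(\tilde v)$ with $\tilde f(t):=-f(-t)$, which is still $C^{1,\gamma}$ and has potential $\tilde G(t):=G(-t)$. Applying Step~1 to $\tilde v$ gives $G(v(-x))=\tilde G(\tilde v(x))>\tilde G(1)=G(-1)$ for every $x\in\mathbb{R}$; letting $x\to-\infty$ yields $G(1)\geq G(-1)$. Combined with Step~1 this proves \eqref{nec2}.

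\emph{Step 3: critical points.} For \eqref{nec1}, I would use the equation $(-\partial_{xx})^{s}v(x)=-G'(v(x))$ and show that the left-hand side tends to $0$ as $x\to\pm\infty$. By standard interior regularity for \eqref{eqcc} with $f\in C^{1,\gamma}$, $v\in C^{2}(\mathbb{R})$ with $v,v',v''$ bounded, so one may write
\[
(-\partial_{xx})^{s}v(x)=\frac{C_{1,s}}{2}\int_{\mathbb{R}}\frac{2v(x)-v(x+z)-v(x-z)}{|z|^{1+2s}}\,dz .
\]
The integrand is dominated uniformly in $x$ by $C\min(|z|^{1-2s},|z|^{-1-2s})\in L^{1}(\mathbb{R})$, while tending to zero pointwise as $x\to\pm\infty$ (because $v(x\pm z)\to\pm1$). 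Dominated convergence yields $(-\partial_{xx})^{s}v(x)\to 0$, hence $G'(\pm1)=0$.

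\emph{Main obstacle.} The delicate point is Step~2: the strict inequality of Theorem~\ref{modthm} alone gives only $G(-1)\geq G(1)$, and the seemingly natural alternative—passing to the limit $x\to-\infty$ in the Hamiltonian \emph{equality}—would require showing $\int_{0}^{\infty}t^{a}|\nabla u(x,t)|^{2}\,dt\to 0$, a much more involved decay estimate on the energy of the extension. The reflection trick short-circuits this difficulty by producing the reverse inequality from the already-proved strict bound.
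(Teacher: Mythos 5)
Your argument is correct. Note first that the paper does not reprove this theorem here --- it is quoted from \cite{CS1} --- but it does tell us the intended mechanism: the Hamiltonian/Modica estimates of Theorem~\ref{modthm}. Your Step~1 uses exactly that mechanism (the strict inequality at $y=0$ collapses to $G(v(x))>G(1)$; the only unstated detail is that the extension of $v$ solves \eqref{extAlpha} with $f$ replaced by $(1+a)d_s^{-1}f$, so the potential is rescaled by a positive constant, which is harmless). Where you genuinely deviate is in the two remaining pieces. For the equality $G(-1)=G(1)$, the route in \cite{CS1} goes through the Hamiltonian \emph{equality}, whose normalization at $-G(1)$ rests on showing that $\int_0^{+\infty}t^a\abs{\nabla u(x,t)}^2\,dt$ vanishes as $x\to\pm\infty$; your reflection trick $\tilde v(x)=-v(-x)$, $\tilde f(t)=-f(-t)$ sidesteps that energy-decay estimate entirely and is strictly more elementary --- it only costs verifying that $\tilde v$ is again a layer solution for a $C^{1,\gamma}$ nonlinearity with potential $G(-\cdot)$, which you do correctly. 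For \eqref{nec1}, the paper points to Lemma~4.8(i) of \cite{CS1}, which works on the extension via a translation/compactness argument; your second-difference representation plus dominated convergence, with dominating function $C\min(\abs{z}^{1-2s},\abs{z}^{-1-2s})$, reaches the same conclusion directly on the trace. The one hypothesis you should make explicit is the global bound on $v''$ needed for the domination near $z=0$: it follows from Lemma~\ref{regNL} together with translation invariance of the equation, since the local $C^{2,\beta}$ estimate depends only on $\norm{v}_{L^\infty}$ and $\norm{f}_{C^{1,\gamma}}$. With that remark, both of your shortcuts are sound, and each buys a more self-contained proof than the one the source paper assembles from its auxiliary lemmas.
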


In the present paper, we prove that the two necessary conditions in Theorem \ref{necLayers} are 
actually sufficient to ensure the existence of a layer solution in $\RR$. 
Under the additional hypothesis $G''(-1) >0$ and $G''(1) >0$, we also prove 
the uniqueness (up to
translations) of a layer solution in $\RR$ and we establish its asymptotic behavior at infinity. 

To study the asymptotic behavior of the layer solution for a given nonlinearity $f$,  
it will be very useful to have the following almost explicit example 
of layer solution for a particular nonlinearity. 
For every $t>0$, a layer solution for some odd nonlinearity
$f^t_s\in C^1([-1,1])$ (see Theorem~\ref{examen} below for more details)
is provided by the following function: 
\begin{equation}\label{explicit}
v^t_s(x)=-1+2\int_{-\infty}^x p_s(t,\overline{x})\,d\overline{x}=
\frac{2}{ \pi}\int_0^\infty\frac{\sin(xr)}{r}e^{-tr^{2s}} dr,
\end{equation}
where $p_s$ is the fundamental solution of the linear fractional heat equation
\begin{equation*}
\partial_t w+(-\partial_{xx})^s w=0, \qquad t>0,\,\,\,x \in \RR.
\end{equation*}

When $s=1/2$, the particular layer solution above agrees with the explicit one used in 
\cite{CSM}, namely
$$
v^t_{1/2}(x)=\frac{2}{\pi}\arctan\frac{x}{t}, \quad\text{with }
\,\, f^t_{1/2}(v)=\frac{1}{\pi t}\sin (\pi v).
$$
In \cite{CSM}, J. Sol\`a-Morales and one of the authors studied layer solutions of 
\begin{equation*}
\begin{cases}
\Delta u=0&\text{ in } \mathbb{R}^{n+1}_+\\ 
\displaystyle{\frac{\partial u}{\partial{\nu}}}
=f(u) &\text{ on } \partial\mathbb{R}^{n+1}_+ ,
\end{cases}
\end{equation*}
which corresponds to the case $a=0$ (that is $s=1/2$) in \eqref{extAlpha}. 
The goal of our paper is to generalize this study to any fractional
power of the Laplacian between $0$ and $1$. We will make a great use of the tools developed in 
\cite{CSM}.

The study of elliptic equations involving fractional powers of the
Laplacian appears to be important in many physical situations in which one has to consider long-range or
anomalous diffusions. From a probabilistic point of view, the
fractional Laplacian appears as the infinitesimal generator of  a
L\'evy process (see the book of Bertoin \cite{bertoin}). In our case, as
in \cite{CSM}, we will concentrate on the problem \eqref{extAlpha} and
we will not consider probabilistic aspects.

Problem \eqref{extAlpha} is clearly a degenerate elliptic problem concerning the
weight $y^{a}$. However, since $a \in (-1,1)$, the weight
$y^{a}$ belongs to the Muckenhoupt class of $A_2$ functions, i.e., it satisfies 
\begin{equation*}
\sup_B \, (\frac{1}{|B|} \int_B w)(\frac{1}{|B|} \int_B w^{-1}) \leq C
\end{equation*}
where $w(x,y)=|y|^{a}$ and $B$ denotes any ball in $\mathbb{R}^{n+1}$. 
This fact allows to develop a regularity theory for weak solutions of \eqref{extAlpha};
see \cite{CS1}. 

Another important property of the weight $y^{a}$ is that it just
depends on the extension variable $y$ and not on the tangential variable
$x$. The equation is therefore invariant under translations in $x$,
which allows the use of the sliding method to get uniqueness of layer solution in $\RR$,
as well as monotonicity of solutions with limits $\pm 1$ at $\pm \infty$.

\begin{rema}
{\rm
Another interesting problem is to consider the existence of monotone solutions
of equation \eqref{problem} connecting $\underline{v}(x_2,...,x_{n})$ at
$-\infty$ to $\overline{v}(x_2,...,x_{n})$ at $+\infty$ where both
$\underline{v}$ and $\overline{v}$ are solutions of
$(-\Delta)^sw=f(w)$ in $\RR^{n-1}$. We will not address this problem here,
but we believe that
the methods developed in the present paper (and in \cite{CSM,CS1})
allow to deal with this type of problem.      
}
\end{rema}

\section{Results}

Throughout the paper we will assume that the nonlinearity $f$ is of class
$C^{1,\gamma}(\RR)$ for some $\gamma > \max(0,1-2s)$. We will denote by $G$ the
associated potential, i.e., 
\begin{equation*}
G'=-f. 
\end{equation*}
The potential $G$ is uniquely defined up to an additive constant. 

Let $P_s=P_s(x,y)$ be the Poisson kernel 
associated to the operator $L_a=\textrm{div\,}(y^{a}\,\nabla)$,
with $a=1-2s$. 
We then have (see section~3 of \cite{CS1}):
for $v$ a bounded $C^2_\textrm{loc}(\RR^n)$ function, $v$ 
is a solution of \eqref{problem} if and only if 
$$u(\cdot,y)=P_s(\cdot,y) * v,$$
a function having $v$ as trace on $\partial \RR^{n+1}_+$,
is a solution of \eqref{extAlpha} with $f$ replaced by $(1+a) d_s^{-1} f
= 2(1-s)d_s^{-1} f$.
Recall that $d_s$ is the constant from \eqref{cttNeumann}.  
It turns out that $2(1-s)d_s^{-1}$ has a positive limit as $s\uparrow 1$.
This is the reason why we wrote problem \eqref{extAlpha}  in
\cite{CS1} with the multiplicative constant 
$1+a=2(1-s)$ in it; we wanted uniform estimates as  $s\uparrow 1$.

Let us recall some regularity results from \cite{CS1}. The first one is Lemma 4.4 of 
\cite{CS1}.

\begin{lem}[\cite{CS1}]\label{regNL}
Let $f$ be a $C^{1,\gamma}(\RR)$ function with $\gamma >\max(0,1-2s)$. 
Then, any bounded solution of 
$$(-\Delta)^s v =f(v)\,\,\,\text{in }\RR^n$$
is $C^{2,\beta}(\RR^n)$ for some $0 <\beta < 1$ depending only on $s$ and $\gamma$.

Furthermore, given $s_0>1/2$ there exists $0 <\beta < 1$ depending only on $n$,
$s_0$, and $\gamma$ ---and hence independent of $s$--- such that for every $s>s_0$, 
$$\|v\|_{C^{2,\beta}(\RR^n)} \leq C $$
for some constant $C$ depending only on $n$, $s_0$, $\|f\|_{C^{1,\gamma}}$,
and $\|v\|_{L^{\infty}(\RR^n)}$ ---and hence independent of $s \in (s_0,1)$.

In addition, the function defined by $u(\cdot, y)=P_s(\cdot,y) \, * \, v$ 
(where $P_s$ is the Poisson kernel 
associated to the operator $L_a$) satisfies for every $s>s_0$,
$$\|u\|_{C^{\beta}(\overline{\RR^{n+1}_+})} + \|\nabla_x u\|_{C^{\beta}(\overline{\RR^{n+1}_+})}
+\|D^2_x u\|_{C^{\beta}(\overline{\RR^{n+1}_+})} \leq C $$
for some constant $C$ independent of $s\in (s_0,1)$, indeed 
depending only on the same quantities as the previous one.
\end{lem}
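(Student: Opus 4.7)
The plan is to reduce the nonlocal problem to the degenerate local extension \eqref{extAlpha} via $u(\cdot,y)=P_s(\cdot,y)*v$, and then apply elliptic regularity for divergence-form equations with the $A_2$ weight $y^a$, bootstrapping via the translation invariance in the $x$-variables.

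First, I would recall that $y^a$ lies in the Muckenhoupt class $A_2$ for every $a\in(-1,1)$, with an $A_2$ constant depending only on $a$. By the Fabes--Kenig--Serapioni theory for degenerate equations with $A_2$ weights, interior De Giorgi--Nash--Moser estimates give $u\in C^\alpha_{\mathrm{loc}}(\RR^{n+1}_+)$. The boundary estimate up to $\{y=0\}$ --- with generalized Neumann datum $-\lim_{y\to 0^+} y^a u_y = d_s^{-1} f(v)$ --- is the content of the Caffarelli--Silvestre type results available in \cite{CS1}, yielding $u\in C^\alpha(\overline{\RR^{n+1}_+})$ locally with an estimate depending only on $\|u\|_{L^\infty}$, $\|f(v)\|_{L^\infty}$ and the $A_2$ constant of $y^a$.

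Second, I would bootstrap using the $x$-translation invariance. Differentiating \eqref{extAlpha} weakly in $x_i$, one sees that $u_{x_i}$ solves the same degenerate equation with Neumann datum proportional to $f'(v)\, v_{x_i}$. Feeding in the already established $C^\alpha$ regularity of $v$, the hypothesis $\gamma>\max(0,1-2s)$ guarantees that $f'(v)\, v_{x_i}$ is Hölder of a sufficient exponent for a weighted Schauder-type boundary estimate to close the loop; this upgrades $u_{x_i}$ to $C^\alpha(\overline{\RR^{n+1}_+})$, hence $v\in C^{1,\alpha}$, and one more iteration yields $u_{x_i}\in C^{1,\alpha}$, i.e.\ $v\in C^{2,\beta}$. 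Applied once more to the mixed derivatives $u_{x_i x_j}$, this also controls $\|D^2_x u\|_{C^\beta(\overline{\RR^{n+1}_+})}$, as claimed.

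Third, for the estimates uniform in $s\in(s_0,1)$, note that $a=1-2s$ stays in the compact set $[-1+2s_0,\,0)$, bounded away from $-1$, so the $A_2$ constant of $y^a$ and hence the constants in the Fabes--Kenig--Serapioni and weighted Schauder estimates are controlled uniformly in $s>s_0$. The only remaining $s$-dependence is the prefactor $(1+a)d_s^{-1}=2(1-s)d_s^{-1}$ in the Neumann condition, which as emphasized following \eqref{cttNeumann} tends to a strictly positive limit as $s\uparrow 1$ and is therefore bounded between two positive constants on $(s_0,1)$.

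The main obstacle is the uniformity in $s$: the weighted Schauder-type boundary estimate must be carried out with constants that do not blow up as $s\uparrow 1$ (i.e.\ $a\to-1^+$). This requires either a quantitative version of the $A_2$-weighted Schauder theory or, as done in \cite{CS1}, a direct argument using the Poisson kernel $P_s$ together with uniform-in-$s$ kernel bounds. Once this uniformity is in hand, the bootstrap above delivers the asserted $s$-independent constant $C$.
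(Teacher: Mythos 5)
This paper does not actually prove Lemma~\ref{regNL}: it is quoted verbatim as Lemma~4.4 of \cite{CS1}. So your proposal can only be measured against the strategy of that reference, which is indeed the one you outline (pass to the extension $u=P_s(\cdot,y)*v$, use regularity for the degenerate equation with the $A_2$ weight $y^a$, and bootstrap using translation invariance in $x$).

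There is, however, a concrete error in the step that matters most here, namely the uniformity in $s\in(s_0,1)$. You claim that $a=1-2s$ stays in $[-1+2s_0,0)$ and is therefore ``bounded away from $-1$''. This is backwards: since $a=1-2s$ and $s\in(s_0,1)$ with $s_0>1/2$, one has $a\in(-1,\,1-2s_0)$, and $a\downarrow -1$ as $s\uparrow 1$. That is precisely the degenerate limit: $\int_0^1 y^a\,dy=1/(1+a)\to\infty$, the $A_2$ constant of $y^a$ blows up, and the constants in the Fabes--Kenig--Serapioni and weighted Schauder estimates do \emph{not} stay bounded by a soft compactness argument. The uniformity as $s\uparrow 1$ is exactly the delicate point of the lemma; it is obtained in \cite{CS1} through the normalization $(1+a)\,\partial u/\partial\nu^a=f(u)$ --- the factor $1+a=2(1-s)$ is inserted for this very purpose, as the present paper emphasizes after \eqref{cttNeumann} --- together with estimates whose dependence on $a$ near $-1$ is tracked explicitly. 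Your final paragraph correctly identifies this as ``the main obstacle'' and defers it back to \cite{CS1}, which contradicts your earlier compactness claim; as written, the uniform-in-$s$ half of the statement is not established. A secondary issue: the bootstrap is circular as stated, since you use H\"older continuity of $v_{x_i}$ (through the Neumann datum $f'(v)\,v_{x_i}$) in order to deduce H\"older continuity of $u_{x_i}$. The standard fix is to work with incremental quotients, or to apply the weighted Schauder-type estimate directly to $u$ with Neumann datum $f(v)\in C^\alpha$, gaining roughly $2s$ derivatives per step; when $s$ is small this requires finitely many iterations (not just two), and the hypothesis $\gamma>\max(0,1-2s)$ is what allows the iteration to pass the threshold $C^{2}$.
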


Following \cite{CSM},
we introduce
\begin{align*}
& B_R^+=\{ (x,y)\in\R^{n+1} : y>0, |(x,y)|<R\}, \\
& \Gamma_R^0=\{ (x,0)\in\partial\R^{n+1}_+ : |x|<R\}, \\
& \Gamma_R^+=\{ (x,y)\in\R^{n+1} : y\ge 0, |(x,y)|=R\}. 
\end{align*}
We consider the problem in a half-ball
\begin{equation}\label{temp}
\begin{cases}
\textrm{div\,}(y^{a}\,\nabla u)=0&\text{ in } B_R^+\\ 
(1+a)\displaystyle{\frac{\partial u}{\partial{\nu}^{a}}}
=f(u) &\text{ on } \Gamma_R^0 .
\end{cases}
\end{equation} 
In the sequel we will denote by 
$$L_{a}=\textrm{div\,}(y^{a}\,\nabla) $$ 
the differential operator in \eqref{temp}. Obviously, there is a natural 
notion of weak solution of \eqref{temp}; see \cite{CS1}.

We have the following regularity result (Lemma~4.5 of \cite{CS1}).
\begin{lem}[\cite{CS1}]
\label{regularity1} 
Let $a \in (-1,1)$ and $R>0$. Let $\varphi \in C^\sigma (\Gamma^0_{2R})$ for some $\sigma \in (0,1)$ and 
$u \in L^\infty(B^+_{2R}) \cap H^1(B^+_{2R},y^a)$ be a weak solution of
\begin{equation*}
\label{problemBR}
\begin{cases}
L_a u=0&\text{ in } B^+_{2R}\subset\RR^{n+1}_+\\ 
\displaystyle{\frac{\partial u}{\partial\nu^a}}
=\varphi&\text{ on } \Gamma^0_{2R}.
\end{cases}
\end{equation*}

Then, there exists  $\beta \in (0,1)$ depending only on $n$, $a$, and $\sigma$, 
such that $u \in C^\beta(\overline{B_R^+})$ and 
$y^a u_y \in C^\beta(\overline{B_R^+})$. 

Furthermore, there exist constants $C^1_R$ and $C^2_R$ depending only on $n$, $a$, 
$R$, $\|u\|_{L^\infty(B_{2R}^+)}$ and also on $\|\varphi \|_{L^\infty(\Gamma^0_{2R})}$ (for $C^1_R$) 
and $\|\varphi \|_{C^\sigma(\Gamma^0_{2R})}$ (for $C^2_R)$, such that 
\begin{equation}\label{reg11}
 \|u\|_{C^\beta(\overline{B_R^+})} \leq C^1_R
\end{equation}
and 
\begin{equation}\label{reg12}
 \|y^a u_y\|_{C^\beta(\overline{B_R^+})} \leq C^2_R.
\end{equation}
\end{lem}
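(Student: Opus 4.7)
The plan is to exploit the conjugate duality between the equations $L_a u = 0$ and $L_{-a} w = 0$, where $w := y^a u_y$. A direct computation shows that if $u$ solves $\textrm{div\,}(y^a \nabla u) = 0$ in $\{y>0\}$, then $w$ solves $\textrm{div\,}(y^{-a} \nabla w) = 0$ in $\{y>0\}$, and the Neumann condition $\partial u/\partial \nu^a = \varphi$ translates into the \emph{Dirichlet} condition $w = -\varphi$ on $\Gamma^0_{2R}$. Since $a \in (-1,1)$, both weights $y^a$ and $y^{-a}$ belong to the Muckenhoupt class $A_2$, so the Fabes--Kenig--Serapioni De Giorgi--Nash--Moser theory for degenerate elliptic equations applies uniformly to both problems. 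This already delivers interior Hölder regularity for $u$ and $w$ in any set of the form $\{y\ge\delta\}\cap B_R^+$, with exponent and constants depending only on $n$, $a$, and the $L^\infty$ bounds. The substantive part is regularity up to $\{y=0\}$, where $u$ and $w$ must be handled separately according to their boundary data.

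For $u$ itself, I would first reduce to the case $\varphi\equiv 0$ by subtracting off a Poisson-type extension (via the kernel $P_s$) of a $C^\sigma(\R^n)$ extension of $\varphi$; the regularity of this auxiliary function up to the boundary, together with \eqref{cttNeumann}, absorbs all the inhomogeneous boundary data. After this reduction, I would reflect the remaining function evenly across $\{y=0\}$: the weight $|y|^a$ is symmetric, and the vanishing flux condition ensures that the reflected function is a genuine weak solution of $L_a u=0$ on the full ball $B_{2R}\subset\R^{n+1}$, lying in $H^1(B_{2R},|y|^a)$. Fabes--Kenig--Serapioni applied in the full ball then yields \eqref{reg11}.

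For $w$, the natural reduction is instead to subtract a Whitney-type $C^\sigma(\overline{B_{2R}^+})$ extension $\Phi$ of $-\varphi$ so that $w+\Phi$ vanishes on $\Gamma^0_{2R}$, and then reflect \emph{oddly} across $\{y=0\}$. This produces a solution of $L_{-a}(\,\cdot\,) = g$ on the full ball with right-hand side $g$ bounded in terms of $\|\varphi\|_{C^\sigma(\Gamma^0_{2R})}$, to which an inhomogeneous Fabes--Kenig--Serapioni Hölder estimate applies, yielding \eqref{reg12}. The main technical obstacle throughout will be to verify rigorously that each reflected function is truly a weak solution on the full ball — in particular that the trace $y^a u_y|_{y=0}$ makes sense as a distribution and that no spurious singular surface measure appears on $\{y=0\}$ after reflection — and to track the precise dependence of constants on $\|u\|_{L^\infty(B_{2R}^+)}$, $\|\varphi\|_{L^\infty}$, and $\|\varphi\|_{C^\sigma}$ through both reductions; for these delicate points I would rely on the trace theory, weak formulation, and flux identities established in \cite{CS1}.
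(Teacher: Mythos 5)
This lemma is not proved in the present paper: it is quoted verbatim as Lemma~4.5 of \cite{CS1}, so there is no in-paper argument to compare against. That said, the architecture you propose (the conjugate function $w=y^au_y$ solving $L_{-a}w=0$ with Dirichlet data $-\varphi$, the $A_2$ property of $|y|^{\pm a}$, Fabes--Kenig--Serapioni interior theory, and even/odd reflection across $\{y=0\}$ after killing the boundary data) is exactly the standard route followed in \cite{CS1} and \cite{cafS}, and your computation of the conjugate equation and of the sign of the Dirichlet datum is correct. Two of your reduction steps, however, do not work as written.

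First, the reduction of $u$ to homogeneous Neumann data via ``a Poisson-type extension (via the kernel $P_s$)'' is miscalibrated: $P_s$ produces a function with prescribed \emph{trace}, not prescribed conormal derivative. By \eqref{cttNeumann}, the Poisson extension of $g$ has Neumann data $d_s^{-1}(-\Delta)^sg$, so to absorb $\varphi$ you would first have to invert $(-\Delta)^s$ on a merely $C^\sigma$ function and then prove up-to-boundary H\"older regularity of the resulting extension --- which is essentially the estimate you are trying to establish. The workable version of this step is to subtract the explicit profile $\psi=-\tfrac{y^{1-a}}{1-a}\,\varphi_\epsilon(x)$ for a mollification $\varphi_\epsilon$ of (an extension of) $\varphi$: this has exact Neumann data $\varphi_\epsilon$ and $L_a\psi=-\tfrac{y}{1-a}\Delta_x\varphi_\epsilon$, and one then runs a Campanato/Schauder iteration in $\epsilon$ to exploit only the $C^\sigma$ norm. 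Second, for $w$ the claim that a Whitney-type $C^\sigma$ extension $\Phi$ yields a right-hand side $g=L_{-a}\Phi$ ``bounded in terms of $\|\varphi\|_{C^\sigma}$'' is false: for a function that is only H\"older continuous, $L_{-a}\Phi$ is a distribution of order one, not an $L^\infty$ function. You must either take $\Phi$ to be the $L_{-a}$-harmonic (energy) extension of the boundary datum, so that $w-\Phi$ solves the \emph{homogeneous} equation with zero Dirichlet data and odd reflection applies cleanly (boundary H\"older continuity of $\Phi$ for $C^\sigma$ data then comes from the degenerate Dirichlet theory of Fabes--Kenig--Serapioni), or take a smooth extension with $|\nabla\Phi|\lesssim y^{\sigma-1}\|\varphi\|_{C^\sigma}$ and invoke an FKS-type estimate with a divergence-form right-hand side $\mathrm{div}(y^{-a}F)$. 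Until one of these repairs is made, the estimate \eqref{reg12} is not established. (There is also a harmless sign slip: you want $\Phi$ to extend $-\varphi$ and to consider $w-\Phi$, or equivalently $\Phi$ extending $\varphi$ and $w+\Phi$.)
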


Problem \eqref{temp} has variational
structure, with corresponding energy functional
\begin{equation}\label{enerfunc}
E_{B_R^+}(w)=
\int_{B_R^+}\dfrac{1}{2}y^{a} |\nabla w|^2
+\int_{\Gamma^0_R} \frac{1}{1+a} G(w),
\end{equation}
where $G'=-f$.
This allows us to introduce some of the following notions.

\begin{defi} 
\label{defsolns}
{\rm

a) We say that $u$ is a {\it layer solution} of \eqref{extAlpha}
if it is a bounded solution of \eqref{extAlpha},
\begin{equation}
\label{increasing}
u_{x_1}>0\quad \text{on } \partial\R^{n+1}_+, \ \text{ and}
\end{equation}
\begin{equation}
\label{limits}
\lim_{x_1\to\pm\infty}u(x,0)=\pm 1 \quad\text{for every }
(x_2,\ldots ,x_{n})\in \R^{n-1}.
\end{equation}
Note that we will indifferently call layer solution a solution as above for problem 
\eqref{extAlpha} or a solution $v$ of equation \eqref{problem} satisfying the same properties.

b) Assume that $u$ is a $C^\beta$ function  in $\overline{\R^{n+1}_+}$
for some $\beta \in (0,1)$,
satisfying $-1<u<1$ in $\overline{\R^{n+1}_+}$ and such that for all $R>0$,
$$y^a |\nabla u|^2 \in L^1(B_R^+).$$ 
We say that $u$ is a {\it local minimizer} of 
problem~\eqref{extAlpha}~if
$$ 
E_{B_R^+} (u)\le E_{B_R^+} (u+\psi) 
$$ 
for every $R>0$ and every $C^1$ function $\psi$ in $\overline{\R^{n+1}_+}$
with compact support in $B_R^+ \cup \Gamma_R^0$ and 
such that $-1\le u+\psi \le 1$ in $B_R^+$. 
To emphasize this last condition, in some occasions we will 
say that $u$ is a local minimizer relative to
perturbations in $[-1,1]$.

c) We say that $u$ is a {\it stable solution} of~\eqref{extAlpha}
if $u$ is a bounded solution of \eqref{extAlpha} and if
\begin{equation}
\label{stability} 
\int_{\R^{n+1}_+} y^{a} |\nabla\xi|^2 - 
\int_{\partial\R^{n+1}_+} \frac{1}{1+a} f'(u)\, \xi^2 \ge 0 
\end{equation}
for every function $\xi\in C^1(\overline{\R^{n+1}_+})$ 
with compact support in $\overline{\R^{n+1}_+}$. 
} 
\end{defi}

It is clear that every local minimizer is a stable solution.
At the same time, it is not difficult to prove that every layer solution $u$ is also a
stable solution ---for this, one uses Lemma~\ref{polipo} below and the fact that
$u_{x_1}$ is a positive solution of the linearized problem to \eqref{extAlpha}.

\subsection{Layer solutions in $\RR$} 
The following result characterizes the nonlinearities~$f$
for which problem \eqref{problem} admits a layer solution in $\mathbb R$. 
In addition, it contains a result on uniqueness of layer
solutions.  

\begin{theorem}\label{existNonlocal}
Let $f$ be any $C^{1,\gamma}(\RR)$ function with $\gamma >\max(0,1-2s)$, where $s\in (0,1)$.
Let $G'=-f$. 
Then, there exists a solution $v$ of 
$$(-\partial_{xx})^s v =f(v)\,\,\,\,\text{in}\,\,\RR$$
such that $v'>0$ in $\RR$ and $\lim_{x \to \pm \infty} v(x)=\pm 1$
if and only if 
\begin{equation}\label{condG}
G'(-1)=G'(1)=0 \quad \text{and} \quad
G>G(-1)=G(1) \text{ in } (-1,1).
\end{equation}

If in addition $f'(-1)<0$ and $f'(1)<0$, then this solution is unique up to translations.

As a consequence, if $f$ is odd and $f'(\pm 1)<0$, 
then the solution is odd with respect to some
point. That is, $v(x+b)=-v(-x+b)$ for some $b\in\mathbb{R}$.
\end{theorem}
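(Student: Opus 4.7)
The necessity of \eqref{condG} is already given by Theorem~\ref{necLayers}, so the plan splits into three tasks: construct a layer solution assuming \eqref{condG}, prove uniqueness up to translations under the additional sign conditions on $f'(\pm 1)$, and deduce odd symmetry. I work throughout on the extension problem \eqref{extAlpha} with $n=1$, producing the layer as $v(x)=u(x,0)$.

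\textbf{Existence.} Following the strategy of \cite{CSM}, I first construct minimizers $u_R$ of $E_{B_R^+}$ on an expanding sequence of half-balls, restricting to competitors with $-1\le u\le 1$ and prescribing boundary data on $\Gamma_R^+$ that interpolate monotonically between $-1$ and $+1$ in $x$. Existence of $u_R$ follows from the direct method since the weight $y^a$ is in $A_2$. The key structural property is that $u_R$ can be taken monotone nondecreasing in $x$; this follows either by a monotone rearrangement in $x$ (which decreases the $x$-part of the Dirichlet energy while preserving the $y$-part and the trace-integral of $G$ by equimeasurability) or by a sliding comparison using the monotone boundary data. Applying the uniform $C^{2,\beta}$ estimates of Lemma~\ref{regNL} and Lemma~\ref{regularity1}, I extract a subsequential limit $u=\lim_{R\to\infty}u_R$ in $C^2_{\mathrm{loc}}(\overline{\RR^{2}_+})$ which solves \eqref{extAlpha} and satisfies $u_x\ge 0$. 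To show that $v=u(\cdot,0)$ has limits $\pm 1$, observe that the monotone bounded $v$ has some limits $\ell_\pm\in[-1,1]$; if $\ell_+<1$, the Hamiltonian identity of Theorem~\ref{modthm} together with the strict inequality $G>G(1)$ on $(-1,1)$ from \eqref{condG} forces $\int_0^\infty t^a u_x^2(x,t)\,dt$ to stay bounded away from zero as $x\to+\infty$, which contradicts the monotonicity-based fact that $u_x\to 0$ in an $L^2_{y^a\,dy}$ sense along a sequence. Hence $\ell_\pm=\pm 1$. Finally, $v'>0$ strictly: since $u_{x_1}\ge 0$ is a weak solution of the linearized problem and is not identically zero (as $v$ is non-constant), the strong maximum principle for $L_a$ (valid in the $A_2$ setting) rules out interior zeros, and a Hopf-type lemma at $y=0$ gives $v'(x)>0$ for every $x$.

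\textbf{Uniqueness up to translations.} Let $u$ and $\tilde u$ be two layer solutions and assume $f'(\pm 1)<0$. I apply the sliding method of Berestycki--Nirenberg in the $x_1$-direction. For $t\in\RR$, set $\tilde u^t(x,y):=\tilde u(x+t,y)$. The condition $f'(\pm 1)<0$ is used to produce a positive supersolution $\chi$ of the linearized equation near $\pm 1$, which allows one to show that for all $t$ sufficiently large, $\tilde u^t\ge u$ on $\partial\R^{2}_+$ in neighborhoods of $\pm\infty$; on the complementary compact interval, largeness of $t$ and the limits of $u,\tilde u$ give the same inequality. Defining $t_\ast:=\inf\{t:\tilde u^s\ge u \text{ in }\overline{\R^{2}_+}\text{ for all }s\ge t\}$, I show $t_\ast\in\RR$ and that $\tilde u^{t_\ast}\equiv u$ via the strong maximum principle and the Hopf lemma for $L_a$: either two solutions touch at an interior point or on $\{y=0\}$, and in either case the linearization gives identical coincidence. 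Repeating the argument with the roles reversed yields the two-sided inequality, hence $u=\tilde u(\cdot+t_\ast,\cdot)$.

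\textbf{Odd symmetry.} If $f$ is odd, then $w(x):=-v(-x)$ solves the same equation, is strictly increasing, and has limits $\pm 1$ at $\pm\infty$; by the uniqueness just established, $w(x)=v(x+c)$ for some $c\in\RR$, and setting $b=c/2$ gives $v(x+b)=-v(-x+b)$.

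\textbf{Main obstacle.} The delicate step is the initiation of the sliding method: the problem lives in an unbounded domain and the operator is degenerate at $\{y=0\}$, so to compare $u$ and $\tilde u^t$ globally one must have quantitative control of the decay of $1-v$ and $1+v$ at $\pm\infty$. This is exactly where the hypothesis $f'(\pm 1)<0$ enters, by furnishing a linearized operator at $\pm 1$ with strictly positive principal eigenvalue and thus a supersolution barrier. Without this sign condition the sliding argument breaks down, which is consistent with the weaker uniqueness statement in the theorem.
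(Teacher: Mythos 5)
Your treatment of necessity, of uniqueness (sliding method plus the barrier near $\pm 1$ furnished by $f'(\pm 1)<0$), and of odd symmetry matches the paper's route. The existence part, however, has a genuine gap at its most delicate point: nothing in your construction prevents the limit $u=\lim u_R$ from being a constant. Since the datum is only prescribed on the curved boundary, the transition region of $u_R(\cdot,0)$ can drift to $\pm\infty$ as $R\to\infty$, and the subsequential limit can then be identically $1$, $-1$, or another critical point of $G$; your later steps tacitly assume $v$ is nonconstant. The paper (Lemma~\ref{keyexist}) rules this out quantitatively: it works on rectangles $Q_R^+=(-R,R)\times(0,R^{1/8})$ with the explicit datum $\arctan x/\arctan R$, proves the energy bound $E_{Q_R^+}(u^R)\le CR^{1/4}$, deduces that each of the sets $\{u^R(\cdot,0)>1/2\}$ and $\{u^R(\cdot,0)<-1/2\}$ has measure at least $R^{3/4}$, hence finds a zero $x_R$ of $u^R(\cdot,0)$ at distance at least $R^{3/4}$ from the endpoints, and recenters there so that the normalization $u_R(0,0)=0$ survives in the limit. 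Some such normalization, with the accompanying energy and measure estimates, is indispensable.

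The second problem is your argument for $\ell_\pm=\pm1$. Theorem~\ref{modthm} is stated for layer solutions, i.e., for solutions already known to have limits $\pm1$, so invoking it to establish the limits is circular; moreover the identity controls $\int_0^\infty t^a\left\{u_x^2-u_y^2\right\}dt$, not $\int_0^\infty t^a u_x^2\,dt$, so even a version of it valid for general limits $L^\pm$ would not directly yield your claimed lower bound. The paper instead first shows that the limit $u$ is a local minimizer relative to perturbations in $[-1,1]$ (by passing the minimality of $u_R$ to the limit) and then applies Proposition~\ref{sameheight}, an energy-comparison argument in expanding half-balls, to obtain $G\ge G(L^-)=G(L^+)$ in $[-1,1]$; combined with the strict hypothesis $G>G(\pm1)$ in $(-1,1)$ and with $u(0,0)=0$, this forces $L^\pm=\pm1$. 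Two smaller points: the rearrangement alternative you offer for monotonicity is not compatible with prescribing data on the curved boundary of a half-ball, which is one reason the paper works on rectangles and obtains $u^R_x\ge0$ by sliding; and the uniform $C^{2,\beta}$ bounds of Lemma~\ref{regNL} apply to entire solutions, so for the approximations one only has the $C^\beta$ bounds of Lemma~\ref{regularity1} and must pass to the limit in the weak formulation rather than in $C^2_{\rm loc}$.
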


\begin{rema}\label{uniq}
\rm{
The statement on uniqueness of layer solution also holds for any nonlinearity
$f$ of class $C^1([-1,1])$ satisfying $f'(-1)<0$ and $f'(1)<0$.
There is no need for $f'$ to be $C^\gamma([-1,1])$. Indeed, we will see that the proof
follows that of \cite{BHM} and thus only requires $f$ to be Lipschitz in
$[-1,1]$ and nonincreasing in a neighborhood of $-1$ and of $1$.
See also Lemma~5.2 of \cite{CSM} where this more general assumption is presented.
}
\end{rema}

Note that a layer solution $v=v(x)$, $x\in\RR$, as in Theorem~\ref{existNonlocal} 
provides with a family of layer solutions of the same equation in
$\RR^n$. More precisely, for each direction $e\in\RR^n$, with $|e|=1$ and $e_1>0$,
let 
$$
v^e(x_1,\ldots,x_n):=v(\langle e,(x_1,\ldots,x_n)\rangle).
$$ 
Then, $v^e$ is a layer
solution of
\begin{equation}\label{eqstar}
 (-\Delta)^{s} v^e= f(v^e)\,\,\,\text{ in } \mathbb{R}^{n}.
\end{equation}
This fact is not immediate from the definition of the fractional Laplacian
\eqref{deflapl}
through principal values in $\RR$ and in $\RR^n$ ---indeed, the integrals
in $\RR$ and in $\RR^n$ differ, but the normalizing constants $C_{n,s}$
in front make them agree.
This fact ---that $v^e$ solves \eqref{eqstar}---
follows directly from the equivalence of problem \eqref{problem} with
the extension problem \eqref{extAlpha}
and the fact that the constant $d_s$ in \eqref{cttNeumann} is independent of the
dimension~$n$.

The equality $G(-1)=G(1)$ is equivalent to
$$
\int_{-1}^1 f(s)ds = 0.
$$

\begin{rema}\label{consecutive}
\rm{
Note that $G$ may have one or several local 
minima in $(-1,1)$
with higher energy than $-1$ and 1, and still satisfy
condition \eqref{condG}. Such $G$ will therefore admit 
a layer solution, hence a solution with limits $-1$ and 1
at infinity. Instead,
such layer solution will not exist if $G$ has a minimum at some point
in $(-1,1)$ with same height as $-1$ and $1$. 
In particular, when $G$ is periodic (as in the Peierls-Nabarro
problem $f(u)=\sin (\pi u)$, see \cite{T1}), 
the previous theorem proves that there exists no increasing solution connecting two
non-consecutive absolute minima of $G$.
}
\end{rema}

In \cite{PSV}, with different techniques than ours it is proved 
that for potentials $G$ with $G'(-1)=G'(1)=0$ and $G> G(-1)=G(1)$ in $(-1,1)$,   
there exists a layer solution to equation \eqref{problem}. 
We also refer to the interesting paper
\cite{frank} where properties of ground state solutions are investigated.

Our next result gives the asymptotic behavior of layer solutions.

\begin{theorem}\label{asympNL}
Let $f$ be any $C^{1,\gamma}(\RR)$ function with $\gamma >\max(0,1-2s)$, where $s\in (0,1)$.  
Assume that $f'(-1)<0$, $f'(1)<0$, and that $v$ is a layer solution of 
$$(-\partial_{xx})^s v=f(v)\,\,\,\text{in}\,\,\RR.$$
 
Then, there exist constants $0<c\leq C$ such that
\begin{equation}\label{asymptder}
c|x|^{-1-2s}\leq v'(x) \leq C|x|^{-1-2s}\,\,\,\,\text{for}\,\,\,|x|\geq 1. 
\end{equation}
As a consequence, for other constants $0<c\leq C$,
\begin{equation}\label{asympt1}
cx^{-2s}\leq 1-v(x) \leq Cx^{-2s}\,\,\,\,\text{for}\,\,\,x>1
\end{equation}
and
\begin{equation}\label{asympt-1}
c|x|^{-2s}\leq 1+v(x) \leq C|x|^{-2s}\,\,\,\text{for}\,\,\,x<-1.
\end{equation}
\end{theorem}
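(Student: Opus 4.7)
The plan is to establish the two-sided pointwise estimate \eqref{asymptder} on $v'$ first; then \eqref{asympt1} and \eqref{asympt-1} follow by integration, since $v(\pm\infty)=\pm 1$ gives
$$
1 - v(x) = \int_x^{+\infty} v'(s)\,ds, \qquad 1 + v(x) = \int_{-\infty}^{x} v'(s)\,ds,
$$
and $\int_x^{\infty} s^{-1-2s}\,ds = (2s)^{-1}x^{-2s}$ for $x>0$. The rate $|x|^{-1-2s}$ is already displayed by the almost explicit layer $v^t_s$ of \eqref{explicit}, whose derivative $(v^t_s)'(x) = 2p_s(t,x)$ is the one-dimensional $2s$-stable transition density and satisfies the classical two-sided bound $c(t)|x|^{-1-2s}\le p_s(t,x)\le C(t)|x|^{-1-2s}$ for $|x|\ge 1$; this universal rate is dictated solely by the nonlocal order $2s$.

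For the upper bound I differentiate the equation to obtain the linearization $(-\partial_{xx})^s v' = f'(v)\,v'$. Since $f'(\pm 1)<0$ and $v(x)\to\pm 1$, there exist $R_0>0$ and $c_0>0$ with $-f'(v(x))\ge c_0$ for all $|x|\ge R_0$. Hence the positive function $v'$ satisfies
$$
(-\partial_{xx})^s v' + c_0\,v' \,\le\, 0 \qquad \text{in } \{|x|>R_0\}.
$$
As a barrier I take the Green's function $G_{c_0}$ of $(-\partial_{xx})^s + c_0$ on $\RR$: a standard Fourier/Bessel-potential computation shows $G_{c_0}$ is positive and satisfies $G_{c_0}(x)\sim C|x|^{-1-2s}$ as $|x|\to\infty$. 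Choosing $M$ so large that $M\,G_{c_0}\ge v'$ on $[-R_0,R_0]$, and noting that both $v'$ and $M\,G_{c_0}$ vanish at infinity, the comparison principle for the nonlocal operator $(-\partial_{xx})^s + c_0$ on the unbounded exterior domain yields $v'(x)\le M\,G_{c_0}(x)\le C|x|^{-1-2s}$ for $|x|\ge 1$.

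For the lower bound I exploit the nonlocality of the operator. Since $v'$ is continuous and strictly positive on the compact interval $[-R_0,R_0]$, there exists $\kappa>0$ with $v'\ge\kappa$ there. Writing $(-\partial_{xx})^s v'$ in symmetric second-difference form (absolutely convergent thanks to the $C^{2,\beta}$ bound of Lemma~\ref{regNL}), I isolate in $-(-\partial_{xx})^s v'(x)$ the shifts $h$ for which $x\pm h\in[-R_0,R_0]$: each such $h$ has $|h|$ of order $|x|$ and produces an integrand of size at least $\kappa/2$ (since one of $v'(x\pm h)$ equals a value in $[-R_0,R_0]$ while $v'(x)$ and the other term are small), yielding a definite positive contribution of order $c\,\kappa\,|x|^{-1-2s}$. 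The remaining near-diagonal and intermediate-shift pieces are controlled by the upper bound already established. Combining with the pointwise identity $-(-\partial_{xx})^s v'(x)=-f'(v(x))v'(x)\le C\,v'(x)$ (since $f'$ is bounded on $[-1,1]$), I obtain $v'(x)\ge c\,|x|^{-1-2s}$ for $|x|\ge 1$.

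The main obstacle is making the lower-bound estimate quantitative: the near-diagonal part of the principal-value integral generates, via Taylor expansion, a $v''$-term whose a priori size is of the very order one wishes to prove. Breaking this circularity requires combining the upper bound with interior Schauder-type estimates for the linearized equation satisfied by $v'$, yielding $|v''(x)|\lesssim|x|^{-1-2s}$, which is enough to absorb the near-diagonal contribution and to show that the ``remainder'' part of the integral is indeed a small perturbation of the main positive contribution. Once \eqref{asymptder} is thus established in both directions, the integration step described at the outset delivers \eqref{asympt1} and \eqref{asympt-1}.
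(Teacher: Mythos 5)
Your integration step and your upper bound are essentially sound and run parallel to the paper's argument: the paper also differentiates the equation and compares $v'$, via a maximum principle for $(-\partial_{xx})^s+c_0$ on the complement of a compact set, against a positive barrier decaying like $|x|^{-1-2s}$. The only difference is the barrier: you take the resolvent kernel $G_{c_0}$ of $(-\partial_{xx})^s+c_0$, while the paper takes $\varphi^t=\partial_x v^t_s$, the derivative of the almost explicit layer, whose supersolution property \eqref{vtsuper} (valid once $2t^{-1}<\min\{-f'(-1),-f'(1)\}$, since $(f^t_s)'(\pm1)=-1/t$) and exact asymptotics \eqref{limder} are exactly what the P\'olya computation of Lemma~\ref{calcul} provides. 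Either barrier works, and the polynomial decay of $G_{c_0}$ you invoke is of comparable difficulty to the paper's Lemma~\ref{calcul}.

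The genuine gap is in your lower bound. First, even granting $|v''(x)|\lesssim|x|^{-1-2s}$, this is \emph{not} ``enough to absorb the near-diagonal contribution'': the mass harvested from $[-R_0,R_0]$ through the kernel is $c_1\kappa\,|x|^{-1-2s}$ with a \emph{specific} constant $c_1\kappa$, whereas the near-diagonal second-difference term over $|h|<1$ is controlled only by $C_2|x|^{-1-2s}$ with an \emph{uncontrolled} Schauder constant $C_2$; nothing forces $C_2<c_1\kappa$, so the main term can be swamped and the argument does not close as stated. To repair it one must additionally shrink the near-diagonal window to $|h|<\delta$ (gaining $\delta^{1-2s}$ for $s<1/2$, and for $s\ge 1/2$ requiring also decay of a H\"older seminorm $[v'']_{C^\alpha}$ with $\alpha>2s-1$ at the same rate $|x|^{-1-2s}$). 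Second, the weighted bound $|v''(x)|\lesssim|x|^{-1-2s}$ does not follow from the standard interior Schauder estimate for the fractional Laplacian, which carries the global term $\|v'\|_{L^\infty(\R)}$ and therefore only yields $|v''|=O(1)$; you would need a localized-tail version of the interior estimate together with a bootstrap on $[v']_{C^\alpha}$ near infinity, none of which is supplied. The paper avoids all of this by observing that the \emph{same} explicit barrier is also a subsolution for small $t$: by \eqref{vtsub}, once $\max\{-f'(-1),-f'(1)\}<(2t)^{-1}$ the function $\tilde w=Cv'-\varphi^t$ satisfies $(-\partial_{xx})^s\tilde w+(2t)^{-1}\tilde w\ge 0$ for $|x|$ large, and the identical maximum principle gives $v'\ge C^{-1}\varphi^t\ge c|x|^{-1-2s}$. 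You should either adopt this symmetric comparison for the lower bound or carry out the weighted Schauder and $\delta$-splitting machinery in full.
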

 
To prove the above theorem for a given nonlinearity $f$,
the following almost explicit layer solution
(we emphasize that it is a layer solution for another nonlinearity) will be very useful.
More properties and remarks on this concrete layers will be given in
section~3.

\begin{theorem}\label{examenintro}
Let $s\in (0,1)$. For every $t>0$, the $C^{\infty}(\RR)$ function 
$$
v^t_s(x):=-1+2\int_{-\infty}^x p_s(t,\overline{x})\,d\overline{x}
=\frac{2}{ \pi}\int_0^\infty\frac{\sin(xr)}{r}e^{-tr^{2s}} \, dr
$$
is the layer solution in $\RR$ of \eqref{problem} for a nonlinearity  $f^t_s\in 
C^{1}([-1,1])$ which is odd and satisfies
$f^t_s(0)=f^t_s(1)=0,$ $f^t_s >0$ in $(0,1)$, and $(f^t_s)'(\pm 1)=-1/t$.
\end{theorem}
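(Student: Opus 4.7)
The plan is to recognize $v^t_s$ as the solution of a linear fractional parabolic equation with Heaviside-type initial data, exploit the self-similarity of the fractional heat kernel to obtain a closed form for $\partial_t v^t_s$, and then read off the nonlinearity from the identity $(-\partial_{xx})^s v^t_s=-\partial_t v^t_s$. First I would verify that the two integral expressions for $v^t_s$ agree: plugging the Fourier representation $p_s(t,x)=\pi^{-1}\int_0^\infty\cos(xr)e^{-tr^{2s}}\,dr$ into the first integral and using Fubini (together with $\int_0^x\cos(\bar x r)\,d\bar x=\sin(xr)/r$) yields the second. From that first form, $(v^t_s)'(x)=2p_s(t,x)>0$ gives strict monotonicity and $C^\infty$-smoothness, the identity $\int_\RR p_s(t,\cdot)=1$ combined with the evenness of $p_s(t,\cdot)$ gives odd symmetry of $v^t_s$ and the limits $\pm 1$ at $\pm\infty$.

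Next, the Fourier representation immediately gives $(-\partial_{xx})^s v^t_s=-\partial_t v^t_s$, since $(-\partial_{xx})^s$ acts on $\sin(xr)/r$ as multiplication by $r^{2s}$ while $\partial_t e^{-tr^{2s}}=-r^{2s}e^{-tr^{2s}}$. To put the right-hand side in a tractable closed form, I use the self-similar representation $p_s(t,x)=t^{-1/(2s)}\phi(xt^{-1/(2s)})$ with $\phi(\cdot):=p_s(1,\cdot)$, which turns $\int_{-\infty}^x p_s(t,\bar x)\,d\bar x$ into $\int_{-\infty}^{xt^{-1/(2s)}}\phi(u)\,du$. Differentiating in $t$ and using the classical tail bound $\phi(u)\sim c_{1,s}|u|^{-1-2s}$ (so that in particular $u\phi(u)\to 0$ as $|u|\to\infty$) produces the clean identity
\begin{equation*}
-\partial_t v^t_s(x)=\frac{x\,p_s(t,x)}{s\,t},\qquad x\in\RR.
\end{equation*}
As a sanity check, for $s=1/2$ this recovers $f^t_{1/2}(v)=\sin(\pi v)/(\pi t)$ after the change of variables $x=t\tan(\pi v/2)$.

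Since $v^t_s:\RR\to(-1,1)$ is a $C^\infty$-diffeomorphism, I then define
\begin{equation*}
f^t_s(v):=\frac{X\,p_s(t,X)}{s\,t}\quad\text{with}\quad X:=(v^t_s)^{-1}(v),
\end{equation*}
so that $(-\partial_{xx})^s v^t_s=f^t_s(v^t_s)$ holds on $\RR$ by construction, and hence $v^t_s$ is a layer solution of $(-\partial_{xx})^s v=f^t_s(v)$. Oddness of $v^t_s$ and evenness of $p_s(t,\cdot)$ force $f^t_s$ to be odd with $f^t_s(0)=0$; since $X$ has the sign of $v$ and $p_s(t,\cdot)>0$, the formula gives $f^t_s>0$ on $(0,1)$; and from $Xp_s(t,X)\sim c_{1,s}t|X|^{-2s}\to 0$ as $|X|\to\infty$ we obtain $f^t_s(\pm 1)=0$.

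Finally, for the $C^1$-regularity at $\pm 1$ and the value $(f^t_s)'(\pm 1)=-1/t$, I differentiate $f^t_s(v^t_s(x))=xp_s(t,x)/(s t)$ in $x$ and divide by $(v^t_s)'(x)=2p_s(t,x)$ to obtain
\begin{equation*}
(f^t_s)'(v^t_s(x))=\frac{1}{2s t}\left(1+\frac{x\,\partial_x p_s(t,x)}{p_s(t,x)}\right).
\end{equation*}
The asymptotic $\phi(u)\sim c_{1,s}|u|^{-1-2s}$, differentiated, gives $\phi'(u)\sim -(1+2s)c_{1,s}|u|^{-2-2s}\mathrm{sgn}(u)$, so $x\partial_x p_s(t,x)/p_s(t,x)\to -(1+2s)$ as $|x|\to\infty$; the display above then tends to $(1-(1+2s))/(2s t)=-1/t$. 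Because $x\partial_x p_s/p_s$ is smooth on $\RR$ and extends continuously to $\pm\infty$ with common value $-(1+2s)$, and because $(v^t_s)^{-1}$ is a homeomorphism of $[-1,1]$ onto the two-point compactification of $\RR$, we conclude $f^t_s\in C^1([-1,1])$ with $(f^t_s)'(\pm 1)=-1/t$. I expect the main technical obstacle to be the \emph{differentiable} control of the tail of $\phi$; this is a classical consequence of a Mellin-type expansion for symmetric stable densities, after which the rest of the verification reduces to a short explicit calculation.
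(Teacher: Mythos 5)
Your proposal is correct and follows essentially the same route as the paper: both derive the identity $(-\partial_{xx})^s v^t_s(x)=\tfrac{1}{2st}\,x\,(v^t_s)'(x)$ from the fractional heat equation and the self-similarity of $p_s$, define $f^t_s$ by composing with $(v^t_s)^{-1}$, and read off $(f^t_s)'(\pm 1)=-1/t$ from the logarithmic derivative $x\partial_x p_s/p_s\to-(1+2s)$. The only difference is that where you invoke the classical (Mellin/P\'olya--Blumenthal--Getoor) tail expansion of the symmetric stable density and its derivative, the paper proves exactly these asymptotics from scratch via a contour-rotation argument (its Lemma~\ref{calcul}, applied with $\kappa=2$ and $\kappa=4s$), so your appeal to that classical fact is legitimate and not a gap.
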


In the theorem, since $f^t_s\in C^1([-1,1])$ and 
$(f^t_s)'(\pm 1) <0$, Theorem~\ref{existNonlocal} and Remark~\ref{uniq}
guarantee that 
its corresponding layer $v^t_s$ is unique up to translations.

As we will see in Theorem~\ref{minimality} below, every layer solution is a local minimizer and,
in particular, a stable solution. This holds in any dimension and for any nonlinearity.
Our next result states that the converse is also true in dimension one and under certain hypothesis 
on the nonlinearity. That is,   
under various assumptions on $G$, we prove that, for $n=1$, local minimizers, solutions 
with limits (not monotone a priori), or stable solutions are indeed layer solutions. 

\begin{theorem}
\label{classif} 
Let $f$ be any $C^{1,\gamma}(\RR)$ function, with $\gamma >\max(0,1-2s)$. Let $n=1$ and $u$ be a 
function such that 
$$
|u|<1 \quad\text{ in }\overline{\R^2_+}.
$$

{\rm a)} Assume that $G>G(-1)=G(1)$ in $(-1,1)$,
and that $u$ is a local minimizer of problem~\eqref{extAlpha}
relative to perturbations in $[-1,1]$. 
Then, either $u=u(x,y)$ or $u^*=u^*(x,y):=u(-x,y)$ 
is a layer solution of~\eqref{extAlpha}.

{\rm b)} Assume $G''(-1)>0$, $G''(1)>0$, and that $u$ is a solution 
of~\eqref{extAlpha} with $$\lim_{x\rightarrow\pm\infty}u(x,0)=\pm 1.$$
Then, $u$ is a layer solution of~\eqref{extAlpha}.

{\rm c)} Assume that $G$ satisfies$:$
\begin{eqnarray}\label{Gstable2}
& &\text{if }-1\le L^- < L^+\le 1, \; G'(L^-)=G'(L^+)=0 ,\\
&&
\text{and } \; G>G(L^-)=G(L^+)
\text{ in } (L^-,L^+), \\
\label{Gstable3}
& &\text{then } L^-=-1 \text{ and } L^+=1.
\end{eqnarray}
Let $u$ be a nonconstant stable solution of \eqref{extAlpha}. 
Then, either $u=u(x,y)$ or $u^*=u^*(x,y):=u(-x,y)$
is a layer solution of~\eqref{extAlpha}.
\end{theorem}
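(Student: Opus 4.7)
The plan is to reduce each case to showing that, up to the reflection $u\mapsto u^*$, the trace $v(x):=u(x,0)$ is strictly monotone and has limits $\pm 1$ at $\pm\infty$; once this is done, being a layer follows from Definition~\ref{defsolns}. The three parts differ in the tool that delivers this reduction: minimality for (a), sliding for (b), and stability for (c). Throughout I would use the uniform $C^\beta$ estimates of Lemma~\ref{regularity1} to pass to locally uniform limits of translated solutions.

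For part (a), I would extract from any $t_k\to+\infty$ a locally uniform subsequential limit $u^+$ of $u(\,\cdot+t_k,\,\cdot\,)$. By lower semicontinuity of the energy \eqref{enerfunc}, $u^+$ is again a local minimizer. A diagonal argument against rational shifts in $x_1$ shows $u^+(x_1+\tau,y)=u^+(x_1,y)$, so $u^+=u^+(y)$. Bounded solutions of $(y^a u^+_y)_y=0$ on $(0,\infty)$ with $a\in(-1,1)$ are constants $c^+$; the boundary condition forces $G'(c^+)=0$, and minimality forces $G(c^+)=\min_{[-1,1]}G=G(\pm 1)$. The hypothesis $G>G(\pm 1)$ in $(-1,1)$ then gives $c^+\in\{-1,1\}$, and similarly for $c^-$. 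Comparing $u$ on a large half-ball with the competitor identically equal to $c^+$ (glued back to $u$ in an annular layer) and exploiting the strict gap $G>G(\pm 1)$ rules out $c^+=c^-$. After possibly replacing $u$ by $u^*$, we have $c^\pm=\pm 1$. Monotonicity is then obtained by a BHM-style sliding of translates $u(\,\cdot+\tau,\,\cdot\,)$, where the minimizer-based comparison replaces a sign hypothesis on $f'$ at $\pm 1$.

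For part (b), the assumption $G''(\pm 1)>0$ yields $f'(\pm 1)<0$, which is exactly what the sliding method needs at infinity. Setting $w_\tau(x,y)=u(x+\tau,y)-u(x,y)$, I would show $w_\tau>0$ in $\overline{\R^{n+1}_+}$ for every $\tau>0$. For $\tau$ large, the hypothesized limits at $\pm\infty$ together with the strict negativity of $f'$ near $\pm 1$ and the maximum principle for the $A_2$-weighted operator $L_a$ give $w_\tau>0$. Then I would show that $\tau^*:=\inf\{\tau>0:w_\sigma>0\text{ for all }\sigma\ge\tau\}$ must equal $0$: if not, at $\tau^*$ a zero is attained, contradicting the strong maximum principle and Hopf lemma for $L_a$ combined with the linearized Neumann equality on $\{y=0\}$. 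Passing to $\tau\downarrow 0$ gives $u_{x_1}\ge 0$, and strict positivity follows from the strong maximum principle applied to $u_{x_1}$.

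For part (c), I would rely on stability. The derivative $\varphi:=u_{x_1}$ solves $L_a\varphi=0$ in $\R^{n+1}_+$ with $(1+a)\partial_{\nu^a}\varphi=f'(u)\varphi$ on $\{y=0\}$. A Sternberg--Zumbrun-type argument, plugging $\xi=\varphi\eta_R$ into \eqref{stability} and integrating by parts against the linearized equation, shows in $n=1$ that $\varphi$ cannot change sign on $\partial\R^2_+$; the needed energy growth on half-balls is controlled by the bound $|u|<1$ and Lemma~\ref{regularity1}. Reflecting if necessary, we may suppose $\varphi\ge 0$, and nonconstancy together with the strong maximum principle upgrades this to $\varphi>0$. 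Then $v(x)=u(x,0)$ is strictly increasing with limits $L^-<L^+$ in $[-1,1]$; translates $u(\,\cdot+x_k,\,\cdot\,)$ with $x_k\to\pm\infty$ converge to $x$-independent constant solutions, so $G'(L^\pm)=0$, and the Hamiltonian identity of Theorem~\ref{modthm}, extended to monotone solutions with limits, gives $G(L^-)=G(L^+)$. The hypotheses \eqref{Gstable2}--\eqref{Gstable3} then force $L^\pm=\pm 1$. The main obstacle I anticipate is the Sternberg--Zumbrun step in part (c): one has to exploit the stability inequality with a degenerate interior weight $y^a$ and a nontrivial boundary term, and the cutoff must be chosen so that the weighted gradient contribution is absorbed while the boundary contribution survives; a secondary technical point is the rigorous exclusion of $c^+=c^-$ in part (a) without a sign condition on $f'$ at $\pm 1$.
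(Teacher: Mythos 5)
The paper disposes of Theorem~\ref{classif} by noting that the proof of Proposition~6.1 of \cite{CSM} carries over verbatim, using the tools already established here: the Liouville-type Lemma~\ref{partials} (stability implies $u_x=c\varphi$ with $\varphi>0$), Proposition~\ref{sameheight}, the Hamiltonian estimate of Theorem~\ref{modthm}, and the sliding/maximum principles of \cite{CS1}. Your parts (b) and (c) follow essentially this route and are sound: (b) is the \cite{BHM} sliding, for which $G''(\pm1)>0$ supplies the sign of $f'$ near $\pm1$ needed to start and to close the sliding; (c) is the Sternberg--Zumbrun argument, which is exactly the content of Lemma~\ref{partials}, with the quadratic energy growth supplied by the Caccioppoli estimate in its proof. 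One omission in (c): to invoke \eqref{Gstable2}--\eqref{Gstable3} you must verify not only $G'(L^\pm)=0$ and $G(L^-)=G(L^+)$ but also the \emph{strict} inequality $G>G(L^-)=G(L^+)$ in $(L^-,L^+)$; this follows from the second (strict) display of Theorem~\ref{modthm} taken at $y=0$, applied to the rescaled monotone solution connecting $L^-$ to $L^+$.

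Part (a) is where you genuinely depart from the paper, and the departure opens two gaps. First, your monotonicity step is a ``BHM-style sliding,'' but the hypotheses of (a) give only $G>G(\pm1)$ in $(-1,1)$, hence no control on the sign of $f'$ near $\pm1$ (one may have $G''(\pm1)=0$); without $f$ nonincreasing near $\pm1$ the sliding cannot even be started at large $\tau$, and the phrase ``the minimizer-based comparison replaces a sign hypothesis on $f'$'' is not an argument as stated (the known minimizer substitute is the $\min/\max$ energy-splitting device, which you neither describe nor, as it turns out, need). Second, your exclusion of $c^+=c^-$ by gluing the constant $c^+$ to $u$ on an annulus does not close: the usable lower bound for $E_{B_R^+}(u)$, coming from the compact set where $u(\cdot,0)$ stays away from $c^+$, is only $O(1)$, while the competitor's cost contains the Dirichlet gluing term $C\eta^{-2}R^{1-2s}$, which does not tend to $0$ when $s\le 1/2$, and the potential energy on the annulus is $o(1)\cdot \eta R$ with no rate, since without $f'(c^+)<0$ you have no decay estimate for $u-c^+$. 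Both gaps vanish if you treat (a) exactly as you treated (c): a local minimizer with $|u|<1$ is stable, so Lemma~\ref{partials} gives $u_x=c\varphi$; if $c=0$ then $u$ is a constant in $(-1,1)$ and Proposition~\ref{sameheight} forces that constant to minimize $G$ on $[-1,1]$, contradicting $G>G(\pm1)$ in $(-1,1)$; hence, after possibly replacing $u$ by $u^*$, $u_x>0$, the limits $L^-<L^+$ exist, and Proposition~\ref{sameheight} combined with $G>G(\pm1)$ in $(-1,1)$ yields $L^\pm=\pm1$.
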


\begin{rema}\label{twolayers}
\rm{
Notice that the hypothesis \eqref{Gstable2}-\eqref{Gstable3} on $G$ in part c) of the theorem
is necessary to guarantee that $u$ connects $\pm 1$. Indeed, assume that
$-1<L^-<L^+<1$ are four critical points of $G$ with $G>G(-1)=G(1)$ in $(-1,1)$
and with $G>G(L^-)=G(L^+)$ in $(L^-,L^+)$. Assume also that
\begin{equation*}
G(\pm 1) < G(L^\pm).
\end{equation*}
Then, by our existence result (Theorem~\ref{existNonlocal}) applied twice
---in $(-1,1)$ and also in $(L^-,L^+)$ after rescaling it---, we have
that $(-\partial_{xx})^{s}v=f(v)$ in $\RR$ admits two different increasing
solutions: one connecting $L^\pm$ at infinity, and another connecting $\pm 1$.

Instead, as pointed out in 
Remark~\ref{consecutive}, if $G\geq G(\pm 1) = G(L^\pm)$ in $(-1,1)$, then there is no
increasing solution connecting $\pm 1$, as a consequence of our
Modica estimate, which gives \eqref{nec2}.

Note that an identically constant
function $u\equiv s$ is a stable solution of~\eqref{problem}
if and only if $G'(s)=0$ and $G''(s)\ge 0$.
This follows easily from the definition~\eqref{stability} of
stability. Therefore, regarding part c) of the previous theorem,
a way to guarantee that a stable solution $u$ is nonconstant
is that $u=s\in (-1,1)$ at some point and that either $G'(s)\not =0$
or $G''(s)<0$. 
}
\end{rema}

\subsection{Stability, local minimality, and symmetry of solutions}
The following result states that every layer solution 
in $\mathbb{R}^{n+1}_+$ is
a local minimizer.
This result is true in every dimension $n$. 

\begin{theorem}
\label{minimality}
Let $f$ be any $C^{1,\gamma}(\RR)$ function and $\gamma >\max(0,1-2s)$, where $s\in (0,1)$. 
Assume that problem \eqref{extAlpha} admits a layer solution $u$
in $\mathbb{R}^{n+1}_+$ with $n\geq 1$. Then$\,:$

{\rm a)}
$u$ is a local minimizer of problem \eqref{extAlpha}.

{\rm b)} The potential $G$ satisfies
\begin{equation}\label{balRn}
G'(-1)=G'(1)=0 \quad \text{and} \quad
G\geq G(-1)=G(1) \text{ in } (-1,1).
\end{equation}
\end{theorem}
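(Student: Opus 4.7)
My plan is to prove (a) by a sliding/foliation argument in the spirit of \cite{CSM} adapted to the weighted extension setting, and then to deduce (b) from (a) together with the behavior of the equation at infinity.

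For (a), I would first upgrade the boundary monotonicity $u_{x_1}>0$ on $\partial\RR^{n+1}_+$ to $u_{x_1}>0$ in all of $\overline{\RR^{n+1}_+}$. The function $v=u_{x_1}$ solves $L_a v=0$ in $\RR^{n+1}_+$ with the linearized boundary condition $(1+a)\,\partial v/\partial\nu^a=f'(u)\,v$, and is smooth enough by the regularity of Lemmas \ref{regNL}--\ref{regularity1}; the strong maximum principle and Hopf boundary lemma for the $A_2$-weighted operator $L_a$ proved in \cite{CS1} then propagate strict positivity from $\partial\RR^{n+1}_+$ to all of $\overline{\RR^{n+1}_+}$. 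The translates $u^t(x,y):=u(x+te_1,y)$ thus form a smooth strictly increasing one-parameter family of solutions of \eqref{extAlpha}, and by the Poisson representation $u(\cdot,y)=P_s(\cdot,y)*u(\cdot,0)$ combined with the boundary limits they satisfy $u^t\to\pm 1$ locally uniformly in $\overline{\RR^{n+1}_+}$ as $t\to\pm\infty$; in particular they foliate the slab $\{-1<w<1\}$.

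Given an admissible competitor $\tilde u=u+\psi$ with $-1\le\tilde u\le 1$ and $\psi$ compactly supported in $B_R^+\cup\Gamma^0_R$, I choose $T$ so large that $u^{-T}\le\tilde u\le u^T$ on $\overline{B_R^+}$ and then run the sliding/calibration argument. The pointwise identities
\begin{equation*}
|\nabla\min(\tilde u,u^t)|^2+|\nabla\max(\tilde u,u^t)|^2=|\nabla\tilde u|^2+|\nabla u^t|^2 \quad\text{and}\quad G(\min)+G(\max)=G(\tilde u)+G(u^t)
\end{equation*}
yield the decomposition $E_{B_R^+}(\min)+E_{B_R^+}(\max)=E_{B_R^+}(\tilde u)+E_{B_R^+}(u^t)$. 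Both $\min(\tilde u,u^t)$ and $\max(\tilde u,u^t)$ are $[-1,1]$-valued and coincide with $u^t$ outside a compact subset of $B_R^+\cup\Gamma^0_R$; sliding $t$ downward from $T$, a touching argument at the first $t$ at which the nonnegative differences $u^t-\min(\tilde u,u^t)$ or $\max(\tilde u,u^t)-u^t$ vanish at an interior or boundary point forces, via the strong maximum principle and Hopf lemma for $L_a$, the relation $t\le 0$, and reading the identity at $t=0$ yields $E_{B_R^+}(u)\le E_{B_R^+}(\tilde u)$. The main technical difficulty is implementing the touching step cleanly in the weighted Sobolev framework, where the bulk and boundary terms are coupled through $\partial/\partial\nu^a$.

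For (b), the identities $G'(\pm 1)=0$ come from passing to the limit in the trace equation: $w:=u(\cdot,0)\in C^{2,\beta}(\RR^n)$ satisfies $(-\Delta)^s w=f(w)$ by Lemma \ref{regNL}, and the shifts $w(\cdot+ke_1)$ converge in $C^2_{\textrm{loc}}$ to the constant $\pm 1$ as $k\to\pm\infty$ (using the $x_1$-limits of $u$ and the uniqueness of the bounded Poisson extension with constant trace), so $f(\pm 1)=0$. To obtain $G\ge G(\pm 1)=G(\mp 1)$ in $(-1,1)$, I argue by contradiction with (a): if $G(m)<G(1)$ for some $m\in(-1,1)$, I would construct a compactly supported $\psi$ making $\tilde u=u+\psi$ equal to the constant $m$ on a subset of $\Gamma^0_R$ of $n$-dimensional measure of order $R^n$ where $u\approx 1$, with a thin transition region. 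The resulting potential saving is of order $R^n(G(1)-G(m))$, while the additional weighted Dirichlet cost is of strictly lower order in $R$ by the $y^a$-weighted trace/Hardy estimates of \cite{CS1}; this contradicts (a) for $R$ large. A symmetric construction at $x_1\to-\infty$ forces $G(-1)=G(1)$. The principal obstacles I anticipate are the sliding/touching step of (a) in the weighted coupled setting and, in (b), constructing a competitor whose transition cost can be bounded, uniformly in $s\in(0,1)$, below the volume-order potential saving.
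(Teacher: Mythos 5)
Your part (a) has a genuine gap at the touching step. You apply the strong maximum principle and the Hopf lemma to the differences $u^t-\tilde u$ (equivalently to $u^t-\min(\tilde u,u^t)$ and $\max(\tilde u,u^t)-u^t$), but $\tilde u=u+\psi$ is an \emph{arbitrary} admissible competitor, not a solution of \eqref{extAlpha}, and neither are $\min(\tilde u,u^t)$ or $\max(\tilde u,u^t)$. No comparison principle is available for the difference of a solution and a non-solution, so ``the first $t$ at which the nonnegative differences vanish'' yields nothing; moreover, the identity $E(\min)+E(\max)=E(\tilde u)+E(u^t)$ read at $t=0$ gives $E_{B_R^+}(u)\le E_{B_R^+}(\tilde u)$ only if one already knows that $E_{B_R^+}(\min(\tilde u,u))\ge E_{B_R^+}(u)$ and $E_{B_R^+}(\max(\tilde u,u))\ge E_{B_R^+}(u)$ --- which is the minimality you are trying to prove. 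The missing ingredient is the intermediate object that makes the sliding legitimate: one first produces an absolute minimizer $w$ of $E_{B_R^+}$ in the class $\{-1\le w\le 1,\ w=u \text{ on }\Gamma^+_R\}$ (Lemma~\ref{existmin}; this requires $f(1)\le 0\le f(-1)$, which holds because $f(\pm 1)=0$ for a layer). This $w$ is a genuine solution of the Dirichlet--Neumann problem, and only then can one slide the translates $u^t$ against $w$ and invoke the maximum principle and the Hopf lemma to conclude $w\equiv u$; that is Lemma~\ref{uniqDN}, and it is the paper's proof. Your foliation idea could alternatively be made rigorous via a calibration built from the leaves $u^t$, but that is a different argument from the touching step you sketch.

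Part (b) is essentially the paper's argument: $G'(\pm 1)=0$ by translating and passing to the limit (Lemma 4.8(i) of \cite{CS1}), and $G\ge G(-1)=G(1)$ by comparing, in far half-balls $B_R^+(b,0,0)$ where $u(\cdot,0)\approx 1$, the energy of $u$ (bounded below by $c\,\varepsilon R^n$) with that of a competitor equal to a constant $m$ with $G(m)<G(1)$; this is Proposition~\ref{sameheight}, and of course it inherits the dependence on part (a). Two corrections to your quantitative claim: the transition region cannot be ``thin'' --- a cutoff transitioning over width $O(1)$ has weighted Dirichlet cost of order $R^{n+1-2s}$, which is not $o(R^n)$ when $s\le 1/2$; one must take width $\eta R$, giving Dirichlet cost $C\eta^{-2}R^{n-2s}=o(R^n)$. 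And the potential energy contributed by the transition strip is then of order $\eta R^n$, i.e.\ the \emph{same} order as the saving with a small constant, so the argument closes only by first fixing $\eta$ small relative to $\varepsilon$ and then letting $R\to\infty$. Uniformity in $s$ is not needed here.
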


The strict inequality $G> G(-1)=G(1)$ in \eqref{balRn} is known
to hold when $n=1$ or, as a consequence, when $u(\cdot, 0)$ 
is a one-dimensional solution in $\RR^n$.
We established this in \cite{CS1} (it is one of the implications
in Theorem~\ref{existNonlocal} above). 
The strict inequality $G>G(\pm 1)$ also holds when $n=2$
(as a consequence of Theorem~\ref{symNL} below) and when
$n=3$ and $s\geq1/2$ (as a consequence of a result from \cite{CCin2}).
It remains an open question in the rest of cases.

For $n=2$, we prove that bounded stable solutions $u$
(and hence also local minimizers and layer solutions)  
are functions of only two variables: $y$ and a linear combination 
of $x_1$ and $x_2$. This statement on the 1D symmetry of $u(\cdot,0)$ is closely
related to a conjecture of De~Giorgi on 1D symmetry for interior
reactions, proved in \cite{GG1,AC,AAC} in low dimensions and partially settled by 
Savin~\cite{savin} up to dimension 8. We also refer the reader to \cite{SV1,SV2} 
where some rigidity properties of boundary reactions have been established through a 
more geometric approach.  Particularly, in \cite{SV1}, the 
following symmetry result in dimension 
$n=2$ is proved by using a completely different approach than the one used in the present paper, 
relying on a weighted Poincar\'e inequality (see also \cite{FAR}).

\begin{theorem}\label{symNL}
Let $f$ be any $C^{1,\gamma}(\RR)$ function and $\gamma >\max(0,1-2s)$, where $s\in (0,1)$.
Let $v$ be a bounded solution of  
$$(-\Delta)^s v=f(v)\,\,\,\text{in }\,\,\RR^2 .$$
Assume furthermore that its extension $u$ is stable. 

Then, $v$ is a function of one variable. 
More precisely,
$$
v(x_1,x_2)=v_0\left( \cos(\theta) x_1+\sin(\theta) x_2\right)\quad
\text{in }\mathbb{R}^2
$$
for some angle $\theta$ and some solution $v_0$ 
of the one-dimensional problem with same nonlinearity~$f$, and
with either $v'_0 > 0$ everywhere or
$v_0$ identically constant.
\end{theorem}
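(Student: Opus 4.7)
The plan is to follow the Ambrosio--Cabr\'e approach to De~Giorgi's conjecture in dimension two, adapted to the extension problem \eqref{extAlpha} with the Muckenhoupt weight $y^a$. Let $u$ be the $L_a$-extension of $v$, so that $u$ is a bounded stable solution of \eqref{extAlpha} in $\RR^3_+$. By Lemmas~\ref{regNL} and \ref{regularity1}, the tangential derivatives $\psi_i:=u_{x_i}$ (for $i=1,2$) belong to $H^1_{\mathrm{loc}}(\overline{\RR^3_+};y^a)$ and solve the linearized problem
\begin{equation*}
L_a\psi_i=0\text{ in }\RR^3_+,\qquad \partial_{\nu^a}\psi_i=\tfrac{1}{1+a}\,f'(u(\cdot,0))\,\psi_i\text{ on }\partial\RR^3_+.
\end{equation*}

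The core step is a weighted Sternberg--Zumbrun inequality. I insert $\xi=|\nabla_x u|\,\eta$, with $\eta$ a Lipschitz compactly supported cutoff in $\overline{\RR^3_+}$, into the stability condition \eqref{stability}. Testing each linearized equation against $\psi_i\eta^2$, integrating by parts, summing in $i=1,2$, and using the pointwise identity $\sum_i\psi_i\nabla\psi_i=|\nabla_x u|\,\nabla|\nabla_x u|$ on $\{|\nabla_x u|>0\}$, one rewrites the boundary contribution as
\[
\int_{\partial\RR^3_+}\tfrac{f'(u)}{1+a}|\nabla_x u|^2\eta^2 \;=\; \int_{\RR^3_+} y^a\Bigl(\textstyle\sum_i|\nabla\psi_i|^2\Bigr)\eta^2 + 2\int_{\RR^3_+} y^a\eta\,|\nabla_x u|\,\nabla|\nabla_x u|\cdot\nabla\eta.
\]
Substituting this back into \eqref{stability}, the cross terms cancel and one is left with
\[
\int_{\RR^3_+} y^a|\nabla_x u|^2|\nabla\eta|^2 \;\geq\; \int_{\RR^3_+} y^a\eta^2\Bigl(\textstyle\sum_i|\nabla\psi_i|^2-|\nabla|\nabla_x u||^2\Bigr),
\]
whose integrand is nonnegative by Cauchy--Schwarz.

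Next I exploit that $n=2$ is the critical dimension by choosing a logarithmic cutoff $\eta_R$ supported in $B_R$ with $\eta_R\equiv 1$ on $B_{R^{1/2}}^+$ and $|\nabla\eta_R|^2\leq C/(r^2\log^2 R)$ on $\{R^{1/2}<r<R\}$, together with the Caccioppoli-type energy bound
\begin{equation*}
\int_{B_R^+} y^a|\nabla u|^2 \;\leq\; C R^2,
\end{equation*}
obtained by testing $L_a u=0$ against $(u-c)\chi^2$ and using $|u|\leq 1$ together with the boundedness of the Neumann data $f(u)$ on $\Gamma_{2R}^0$. A dyadic decomposition of the annulus $\{R^{1/2}<r<R\}$ then gives $\int y^a|\nabla_x u|^2|\nabla\eta_R|^2 \leq C/\log R \to 0$ as $R\to\infty$, forcing $\sum_i|\nabla\psi_i|^2=|\nabla|\nabla_x u||^2$ a.e. in $\RR^3_+$.

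The equality case in Cauchy--Schwarz then yields that $\nabla\psi_1$ and $\nabla\psi_2$ are parallel to the two-vector $(\psi_1,\psi_2)$ pointwise on $\{|\nabla_x u|>0\}$, and a standard connectedness argument shows that the ratio $\psi_2/\psi_1$ is a constant $c\in[-\infty,+\infty]$. Hence $u$ depends only on $y$ and on $\cos(\theta)x_1+\sin(\theta)x_2$ for some $\theta$, so $v=u(\cdot,0)=v_0(\cos(\theta)x_1+\sin(\theta)x_2)$. The final dichotomy $v_0'>0$ everywhere versus $v_0$ identically constant follows from the strong maximum principle and Hopf's lemma for $L_a$ applied to $v_0'$, which is a nonnegative solution of the linearized boundary-reaction problem. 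The principal technical difficulty lies in making the weighted Sternberg--Zumbrun computation fully rigorous: the set $\{|\nabla_x u|=0\}$ where $|\nabla_x u|$ fails to be smooth must be treated by an approximation such as $\sqrt{|\nabla_x u|^2+\varepsilon}$, and the integration by parts must carefully track the degeneracy of $L_a$ and $\partial_{\nu^a}$ at $\{y=0\}$ in order to ensure the exact cancellation of the boundary $f'(u)$-terms against the interior integrals.
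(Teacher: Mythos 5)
Your argument is correct in outline, but it is a genuinely different route from the one the paper takes; in fact it is essentially the approach of \cite{SV1}, which the paper explicitly cites as ``a completely different approach'' to this same theorem. The paper proves Theorem~\ref{symNL} via Lemma~\ref{polipo} (stability is equivalent to the existence of a H\"older continuous $\varphi>0$ solving the linearized problem) followed by Lemma~\ref{partials}: the quotient $\sigma=u_{x_i}/\varphi$ satisfies $\textrm{div\,}(y^a\varphi^2\nabla\sigma)=0$ with vanishing conormal derivative, and a Liouville theorem from \cite{CS1} under the growth condition $\int_{B_R^+}y^a(\varphi\sigma)^2\le CR^2$ forces $\sigma$ to be constant, i.e.\ $u_{x_i}=c_i\varphi$. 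Both proofs hinge on the same Caccioppoli bound $\int_{B_R^+}y^a|\nabla u|^2\le CR^{n-2s}+CR^n\le CR^2$ and on the criticality of $n=2$, but they exploit it differently: the Liouville route only needs a plain cutoff with $|\nabla\xi|\le C/R$, whereas your Sternberg--Zumbrun route needs the logarithmic cutoff to make $\int y^a|\nabla_x u|^2|\nabla\eta_R|^2\to 0$. What the paper's route buys is that it entirely avoids the two delicate points you correctly flag: the non-smoothness of $|\nabla_x u|$ on its zero set, and --- more importantly --- the passage from the pointwise parallelism $\psi_i\nabla\psi_j=\psi_j\nabla\psi_i$ on $\{|\nabla_x u|>0\}$ to a single global direction; your phrase ``a standard connectedness argument'' hides real work when $\{|\nabla_x u|>0\}$ is a priori disconnected, and this is precisely where \cite{SV1} must argue carefully. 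Moreover, the identity $u_{x_i}=c_i\varphi$ with $\varphi>0$ gives $\partial_z u_0=(c_1^2+c_2^2)^{1/2}\varphi$, so the final dichotomy ($v_0'>0$ everywhere or $v_0$ constant) is immediate, whereas in your scheme the claim that $v_0'$ is a \emph{nonnegative} solution of the linearized problem (needed before invoking Hopf's lemma) requires either extracting the sign from the equality case or re-running a stability argument for the reduced one-dimensional problem. What your route buys in exchange is finer pointwise geometric information (a weighted geometric Poincar\'e inequality controlling the curvature of level sets), which is useful in settings where a global positive solution $\varphi$ of the linearized problem is not available.
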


For $n=3$ and $s\geq 1/2$, this 1D symmetry result has been proved by 
E. Cinti and one of the authors in \cite{CCin1,CCin2}.
It remains open for $n=3$ and $s< 1/2$, and also for $n\geq 4$.

A simpler task than the study of all stable solutions consists of 
studying solutions 
$u$ of \eqref{extAlpha} with $|u|\le 1$ and satisfying the limits
\eqref{limits} {\it uniformly} in $(x_2,\ldots ,x_{n})\in\mathbb{R}^{n-1}$.
Under hypothesis $f'(-1)<0$ and $f'(-1)<0$, 
it is possible to establish in every dimension~$n$ that 
these solutions depend only on
the $y$ and $x_1$ variables, and are monotone 
in~$x_1$. 
Here, by the uniform limits hypothesis,
the $x$-variable in which the solution finally depends on is known
a priori ---in contrast with the variable of dependence in Theorem~\ref{symNL}.
For the standard Laplacian this result was first established in~\cite{BHM} 
using the sliding method.
We will not provide the proof of the result because it is completely analogue to the one  
in \cite{CSM}. Since our operator $L_a$ is 
invariant under translations in $x$, one can perform the sliding method together with the 
maximum principles proved in \cite{CS1}.

Theorem~\ref{symNL} is a partial converse in dimension two
of Theorem~\ref{minimality} a), 
in the sense that it establishes the monotonicity of
stable solutions and in particular, of local minimizers.
The remaining property for being a layer solution
(i.e., having limits $\pm 1$ at infinity) 
requires additional hypotheses on $G$, as in Theorem~\ref{classif}.

\subsection{Outline of the paper} In section 3 we construct an almost explicit 
layer solution (Theorem~\ref{examenintro})
and we use it to establish the asymptotic behavior of any layer solution in $\RR$
as stated in Theorem~\ref{asympNL}. 
In section 4 we prove the existence of minimizers to mixed Dirichlet-Neumann problems
in bounded domains of $\RR^{n+1}_+$ ---a result needed in subsequent sections.  
In section 5 we prove the local minimality of layer solutions in any dimension 
and the necessary conditions
on $G$ for such a layer in $\R^n$ to exist, Theorem \ref{minimality}. 
The 1D symmetry result for stable solutions in $\RR^2$,
Theorem~\ref{symNL}, is established in section 6. 
Finally, section~7 concerns layers in $\RR$ and establishes the existence 
Theorem~\ref{existNonlocal} 
and the classification result Theorem~\ref{classif}.

\section{An example of layer solution. Asymptotic properties of layer solutions}

In this section we provide with an example of layer solution based on the fractional heat equation. 
From it, we get the asymptotic behavior of layers for all other nonlinearities.
Let us first explain how the concrete layer is found. 

The starting point is the fractional heat equation, 
\begin{equation}\label{fracHeat}
\partial_t w+(-\partial_{xx})^s w=0,  \qquad t>0,\,\,\,x \in \RR ,
\end{equation}
which is known to have a fundamental solution of the form
\begin{equation}\label{explicitq}
p_s(t,x)=t^{-\frac{1}{2s}}q_{s}(t^{-\frac{1}{2s}} x) >0
\end{equation}
for $x\in\RR, t>0$. 
Being the fundamental solution, $p_s$ has total integral
in $x$ equal to $1$, i.e., 
\begin{equation}\label{propps}
\int_{\RR}p_{s}(t,x)\,dx=1 \qquad \text{ for all } t>0.
\end{equation}
To compute $p_s$, one takes the Fourier transform of \eqref{fracHeat} to obtain
$$
\partial_t \widehat{p_s} +|\xi|^{2s}\widehat{p_s} =0,
$$
where $\widehat{p_s}=\widehat{p_s}(t,\xi)$ is the Fourier transform in $x$ of $p_s(t,x)$.
Thus, since $p_s(0,\cdot)$ is the Delta at zero and hence $\widehat{p_s}(0,\cdot)\equiv 1$,
we deduce
$$
\widehat{p_s}(t,\xi) =\exp \{-t|\xi|^{2s}\}.
$$
{From} this, by the inversion formula for the Fourier transform, we find
\begin{equation}\label{explicitp}
 p_s(t,x)=\frac{1}{ \pi}\int_0^\infty \cos(xr)e^{-tr^{2s}}\,dr.
\end{equation}

It follows that the function 
\begin{equation}\label{defv}
 v^t_s(x):=-1+2\int_{-\infty}^x p_s(t,\overline{x})\,d\overline{x}
=2\int_0^x p_s(t,\overline{x})\,d\overline{x}
\end{equation}
is increasing and has limits $\pm 1$ at $\pm \infty$.
The concrete expression \eqref{explicitv} below for $v^t_s$ 
is obtained by interchanging the order of the two integrals
when using \eqref{explicitp} to compute the primitive of $p_s$.

That $v^t_s$ is a layer solution is stated in the next
theorem, which contains all statements in~Theorem~\ref{examenintro}
and also the asymptotic behavior of $v^t_s$, among other facts. 
The proof of the theorem is given at the end of this section.

\begin{theorem}\label{examen}
Let $s\in (0,1)$. For every $t>0$, the $C^\infty(\RR)$ function 
\begin{eqnarray}\label{explicitv}
v^t_s(x) &:= & -1+2\int_{-\infty}^x p_s(t,\overline{x})\,d\overline{x}
= 2\int_0^x p_s(t,\overline{x})\,d\overline{x}\nonumber\\
&=& \frac{2}{ \pi}\int_0^\infty\frac{\sin(xr)}{r}e^{-tr^{2s}} \, dr\nonumber\\
&=& \textrm{sign}(x)\, \frac{2}{ \pi}\int_0^\infty\frac{\sin(z)}{z}e^{-t(z/|x|)^{2s}} \, dz
\end{eqnarray}
is the layer solution in $\RR$ of \eqref{problem} for a nonlinearity  $f^t_s\in 
C^{1}([-1,1])$ which is odd and twice differentiable in $[-1,1]$  and which satisfies 
$$
f^t_s(0)=f^t_s(1)=0 , \quad f^t_s >0 \text{ in } (0,1), \quad (f^t_s)'(\pm 1)=-\,\frac{1}{t} 
$$
and
\begin{equation}\label{secondf}
(f^t_s)''(1)=-\, \frac{\pi}{t}\, \frac{\cos(\pi s)}{\sin(\pi s)}\,\frac{\Gamma(4s)}{(\Gamma(2s))^2}\,\,
\begin{cases}
<0 &\text{ if } 0<s< 1/2 ,\\ 
=0 &\text{ if }  s =1/2, \\
>0 &\text{ if } 1/2< s <1. 
\end{cases}
\end{equation}

In addition, the following limits exist:
\begin{equation}\label{limder}
\lim_{|x| \to \infty} |x|^{1+2s}(\partial_x v^t_s)(x) =t \frac{4s}{\pi}\sin(\pi s)\Gamma(2s)>0
\end{equation}
and, as a consequence, 
$$\lim_{x \to \pm \infty} |x|^{2s} |v^t_s(x) \mp 1| =t \frac{2}{\pi}\sin(\pi s)\Gamma(2s)>0. $$
\end{theorem}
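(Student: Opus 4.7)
The plan is to exploit the fact that $v^t_s$ is, up to an additive constant, the primitive of $2p_s(t,\cdot)$, together with the invariance of $\partial_t+(-\partial_{xx})^s$ and the self-similar scaling $p_s(t,x)=t^{-1/(2s)}p_s(1,t^{-1/(2s)}x)$, to force $v^t_s$ to solve a semilinear fractional equation with an explicitly identifiable nonlinearity $f^t_s$. First, $v^t_s$ is smooth, strictly increasing on $\RR$, and has limits $\pm 1$ at $\pm\infty$ by $p_s>0$ and \eqref{propps}; evenness of $\widehat{p_s}(t,\cdot)=e^{-t|\cdot|^{2s}}$ gives evenness of $p_s(t,\cdot)$ and oddness of $v^t_s$, while the third and fourth representations in \eqref{explicitv} follow from inserting \eqref{explicitp} into the primitive, interchanging integrals, and rescaling. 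Using \eqref{fracHeat} together with a Fourier argument based on $\widehat{v^t_s+1}(\xi)=(2/i\xi)\,e^{-t|\xi|^{2s}}$ as a tempered distribution (the singularity at $\xi=0$ being integrable since $|\xi|^{2s-1}\in L^1_{\mathrm{loc}}$ for $s>0$), I obtain $\partial_t v^t_s+(-\partial_{xx})^s v^t_s=0$; combining with the identity $\partial_t v^t_s(x)=-\tfrac{x}{2st}\partial_x v^t_s(x)$, which follows by differentiating the scaling relation in $t$, gives
\[ (-\partial_{xx})^s v^t_s(x)=\frac{x}{2st}\,\partial_x v^t_s(x). \]

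\textbf{Definition of $f^t_s$ and qualitative properties.} Since $v^t_s:\RR\to(-1,1)$ is a smooth bijection, the previous identity forces me to set
\[ f^t_s(v):=\frac{(v^t_s)^{-1}(v)}{2st}\,\partial_x v^t_s\!\left((v^t_s)^{-1}(v)\right),\qquad v\in(-1,1), \]
making $v^t_s$ a layer solution of $(-\partial_{xx})^s v=f^t_s(v)$. Oddness of $f^t_s$ and $f^t_s(0)=0$ follow from oddness of $v^t_s$ and evenness of $\partial_x v^t_s$; positivity of both factors for $x>0$ yields $f^t_s>0$ on $(0,1)$. The continuous extension to $v=\pm 1$ with $f^t_s(\pm 1)=0$, the $C^1$ regularity up to the endpoints, and the exact values of $(f^t_s)'(\pm 1)$ and $(f^t_s)''(1)$ will all be derived from the asymptotic expansion of $p_s$.

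\textbf{Asymptotics and endpoint derivatives.} The technical input is the two-term expansion
\[ p_s(1,z)=\frac{2s\sin(\pi s)\Gamma(2s)}{\pi\,|z|^{1+2s}}-\frac{4s\sin(\pi s)\cos(\pi s)\Gamma(4s)}{\pi\,|z|^{1+4s}}+o\bigl(|z|^{-1-4s}\bigr) \]
as $|z|\to\infty$, derived from \eqref{explicitp} by expanding $e^{-r^{2s}}=\sum_k(-1)^kr^{2sk}/k!$ and applying term-by-term the identity $\int_0^\infty\cos(zr)r^{\alpha-1}\,dr=\Gamma(\alpha)\cos(\pi\alpha/2)|z|^{-\alpha}$ (for $0<\alpha<1$ and extended by analytic continuation). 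Rescaling via $p_s(t,x)=t^{-1/(2s)}p_s(1,t^{-1/(2s)}x)$ yields \eqref{limder}, and integrating from $x$ to $\infty$ gives the stated asymptotic for $1\mp v^t_s$. Differentiating the key identity in $x$ I obtain
\[ (f^t_s)'(v^t_s(x))=\frac{1}{2st}\Bigl(1+\frac{x\,\partial_{xx}v^t_s(x)}{\partial_x v^t_s(x)}\Bigr), \]
and the leading-order asymptotic forces $x\,\partial_{xx}v^t_s/\partial_x v^t_s\to-(1+2s)$, so $(f^t_s)'(\pm 1)=-1/t$. For the second derivative I would invert the two-term expansion of $1-v^t_s(x)$ as a power series in $x^{-2s}$, substitute into the two-term expansion of $(x/2st)\,\partial_x v^t_s(x)$, and identify the coefficient of $(1-v)^2$; the resulting algebra produces precisely formula \eqref{secondf}, whose sign is governed by $\cos(\pi s)$ and thus vanishes exactly at $s=1/2$.

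\textbf{Main obstacle.} The delicate step is justifying the two-term asymptotic of $p_s(1,z)$ with the precise constants, since the series $\sum_k(-1)^k r^{2sk}/k!$ does not converge term-by-term under the Fourier integral; the expansion must be obtained rigorously by a contour shift in the complex $r$-plane combined with uniform-in-$z$ bounds on the truncation error. Once that expansion is in hand, everything else is bookkeeping using the identity $f^t_s(v^t_s(x))=\tfrac{x}{2st}\,\partial_x v^t_s(x)$ and elementary calculus.
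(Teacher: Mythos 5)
Your overall strategy coincides with the paper's: the same scaling reduction, the same key identity $(-\partial_{xx})^s v^t_s(x)=\frac{x}{2st}\,\partial_x v^t_s(x)$ obtained from the fractional heat equation and self-similarity, the same definition of $f^t_s$ by inverting $v^t_s$, and the asymptotics of the Fourier integral \eqref{explicitp} via a contour deformation (this is exactly Lemma~\ref{calcul}, P\'olya's lemma, in the paper; for $s>1/2$ one must rotate only to the angle $\pi/(4s)$ rather than to the imaginary axis, a point worth being aware of when you carry out your ``contour shift''). Your two-term expansion of $p_s(1,z)$ has the correct constants, and the inversion algebra you sketch does reproduce \eqref{secondf}.

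There is, however, a genuine gap in how you pass from the asymptotics of $v'$ to the derivative information on $f^t_s$. You assert that ``the leading-order asymptotic forces $x\,\partial_{xx}v^t_s/\partial_x v^t_s\to-(1+2s)$.'' This does not follow: knowing $v'(x)=Ax^{-1-2s}+o(x^{-1-2s})$ gives no control on $v''$, since asymptotic expansions cannot in general be differentiated. The same issue reappears, more seriously, for the second derivative: your plan of inverting the expansion of $1-v^t_s$ and reading off the coefficient of $(1-v)^2$ in $f^t_s(v)$ only yields a Peano-type second-order expansion of $f^t_s$ at $1$, not twice differentiability in the sense claimed (existence of $(f^t_s)''(\pm1)$ as the derivative of $(f^t_s)'$). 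To close both gaps you need separate, rigorously justified asymptotics for $q_s'(x)=\tfrac12 v''(x)$ — that is, a two-term expansion of the integral $\int_0^\infty r\sin(xr)e^{-r^{2s}}\,dr$ — and then you must work with the exact formula
\begin{equation*}
(f^t_s)'(v(x))=\frac{1}{2s}\Bigl(1+\frac{x\,q_s'(x)}{q_s(x)}\Bigr),
\end{equation*}
showing that $(1+2s)q_s+xq_s'=O(x^{-1-4s})$ with an explicit leading constant, and computing $\lim_{v\to1}\bigl((f^t_s)'(v)-(f^t_s)'(1)\bigr)/(v-1)$ directly. This is precisely what the paper does by a second integration by parts followed by Lemma~\ref{calcul} with $\kappa=4s$. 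The fix stays entirely within your framework, but it is an additional computation, not ``bookkeeping'' that follows from the expansion of $p_s(1,z)$ alone.
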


\begin{rema}\label{remark32}
\rm{
As stated in the theorem, we have $f^t_s\in C^1([-1,1])$ and 
$(f^t_s)'(\pm 1) <0$ for every $s\in (0,1)$. 
In particular, by Theorem~\ref{existNonlocal} and Remark~\ref{uniq}, 
its corresponding layer $v^t_s$ is unique up to translations.

When $s=1/2$, the particular layer above agrees with the explicit one used in 
\cite{CSM}, namely
$$
v^t_{1/2}(x)=\frac{2}{\pi}\arctan\frac{x}{t}, \quad\text{with }
\,\, f^t_{1/2}(v)=\frac{1}{\pi t}\sin (\pi v).
$$
This can be easily seen computing \eqref{explicitp} explicitly when $s=1/2$,
using integration by parts, to obtain 
$$
\partial_x v^t_{1/2}(x)= 2 p_{1/2}(t,x)=
\frac{2}{\pi}\,\frac{1}{t}\,\frac{1}{1+x^2/t^2}.
$$

We may try to see which function we obtain in the above formulas setting
$s=1$. In this case, \eqref{explicitp} can be checked to be equal to a Gaussian and thus
$v^t_1$, two times its primitive, is the error function $\text{erf}(x)$ ---up to 
a scaling constant. Its derivative is therefore
$e^{-cx^2}$, which does not have the correct decay $e^{-cx}$ at $+\infty$ for 
the derivative $v'$ of a layer
solution to $-v''=f(v)$. This is due to the fact that the limit as $s\to 1$ of $f^t_s$
will not be a $C^1([-1,1])$ nonlinearity at the value $1$ ---even if they
all satisfy $(f^t_s)'(1)=-1/t$. The reason is that their second derivatives at $1$,
$(f^t_s)''(1)$, blow-up as $s\to 1$ as shown by \eqref{secondf}.

Note also that \eqref{secondf} shows that, when $1/2<s<1$, the nonlinearity $f^t_s$ 
is positive but not concave in $(0,1)$.
}
\end{rema}

The following immediate consequence of Theorem \ref{examen} will give the
asymptotic behavior of layer solutions for any nonlinearity $f$. 

\begin{coro}\label{supnl}
Let $s \in (0,1)$ and $t>0$ be a constant. Then, the function 
$$
\varphi^t =\partial_x v^t_s >0, 
$$
where $v^t_s$ is the explicit layer of Theorem~\ref{examen},
satisfies
\begin{equation}\label{vtsuper}
(-\partial_{xx})^s \varphi^t (x)+ 2t^{-1} \varphi^t (x) \geq 0 \qquad\text{for }
\ |x|\text{ large enough,}
\end{equation}
\begin{equation}\label{vtsub}
(-\partial_{xx})^s \varphi^t (x)+ 2^{-1}t^{-1} \varphi^t (x) \leq 0 \qquad\text{for }
\ |x|\text{ large enough,}
\end{equation}
and that the following limit exists and is positive: 
\begin{equation}\label{vtlim}
\lim_{|x| \to \infty}  |x|^{1+2s} \varphi^t (x)\in (0, +\infty).
\end{equation}
\end{coro}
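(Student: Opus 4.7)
The plan is to exploit that $\varphi^t = \partial_x v^t_s$ is, by translation invariance, a solution of the linearization of the equation for $v^t_s$ around itself. Since $v^t_s \in C^\infty(\RR)$ and $f^t_s \in C^1([-1,1])$ (both by Theorem~\ref{examen}), differentiating the identity $(-\partial_{xx})^s v^t_s = f^t_s(v^t_s)$ with respect to $x$ gives
\begin{equation*}
(-\partial_{xx})^s \varphi^t(x) = (f^t_s)'(v^t_s(x))\,\varphi^t(x) \qquad \text{for all } x\in\RR.
\end{equation*}
Here I would justify the commutation between $\partial_x$ and $(-\partial_{xx})^s$ either through the Fourier representation of $v^t_s$ given in \eqref{explicitv} or from the integral principal-value definition \eqref{deflapl}, which is translation invariant.

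Next I would use the asymptotic information about $v^t_s$ and $(f^t_s)'$ at the endpoints. By Theorem~\ref{examen}, $v^t_s(x) \to \pm 1$ as $x \to \pm\infty$, and $(f^t_s)'(\pm 1) = -1/t$. Since $f^t_s \in C^1([-1,1])$, continuity of $(f^t_s)'$ on $[-1,1]$ implies that $(f^t_s)'(v^t_s(x)) \to -1/t$ as $|x|\to\infty$. Therefore, there exists $R>0$ such that for $|x| \geq R$,
\begin{equation*}
-\,\frac{2}{t} \;\leq\; (f^t_s)'(v^t_s(x)) \;\leq\; -\,\frac{1}{2t}.
\end{equation*}
Since $v^t_s$ is a layer (in particular monotone increasing), $\varphi^t > 0$ on $\RR$. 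Multiplying the two-sided bound by $\varphi^t > 0$ and substituting into the linearized equation yields immediately \eqref{vtsuper} and \eqref{vtsub}.

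For \eqref{vtlim}, there is nothing left to prove beyond quoting the identity \eqref{limder} from Theorem~\ref{examen}, which states that
\begin{equation*}
\lim_{|x|\to\infty} |x|^{1+2s}\,\varphi^t(x) \;=\; \lim_{|x|\to\infty} |x|^{1+2s}(\partial_x v^t_s)(x) \;=\; t\,\frac{4s}{\pi}\sin(\pi s)\,\Gamma(2s) \;>\;0,
\end{equation*}
which sits in $(0,+\infty)$ as required. The proof is thus essentially a bookkeeping argument drawing on Theorem~\ref{examen}; I do not foresee a genuine obstacle, only the need to verify the commutation of $\partial_x$ and $(-\partial_{xx})^s$, which is routine given the explicit integral representation of $v^t_s$ in \eqref{explicitv}.
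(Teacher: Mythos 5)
Your proposal is correct and follows essentially the same route as the paper: identify $\varphi^t$ as a positive solution of the linearized equation $(-\partial_{xx})^s\varphi^t=(f^t_s)'(v^t_s)\,\varphi^t$, use $(f^t_s)'(\pm1)=-1/t$ together with the continuity of $(f^t_s)'$ and the limits of $v^t_s$ to trap the coefficient between $-2/t$ and $-1/(2t)$ for $|x|$ large, and quote \eqref{limder} for \eqref{vtlim}. No gaps.
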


\begin{proof}
Clearly $\varphi^t =\partial_x v^t_s>0$ satisfies the linearized equation
$$
(-\partial_{xx})^s \varphi^t - (f^t_s)'(v^t_s(x)) \, \varphi^t=0\quad\text{ in } \RR.
$$
Using that  $\varphi^t >0$, $v^t_s$ has limits $\pm 1$ at $\pm\infty$,
$f^t_s$ is $C^1([-1,1])$, 
and that $(f^t_s)'(\pm 1) =-1/t$, both \eqref{vtsuper} and \eqref{vtsub} follow.
The statement \eqref{vtlim} follows from \eqref{limder}.
\end{proof}

With this corollary at hand, we can now prove the asymptotics of any layer.

\medskip
\noindent
{\it Proof of Theorem}~\ref{asympNL}.
The proof uses Corollary~\ref{supnl} above and a very easy maximum principle,
Lemma 4.13 and Remark 4.14 of \cite{CS1}. Its statement in dimension one
is the following. 

Let $w\in C^2_{\rm loc}(\RR)$ be a continuous function in $\R$ such that
$w(x)\rightarrow 0$ as $|x|\rightarrow\infty$ and
\begin{equation}\label{linMP}
(-\partial_{xx})^s w +d(x) w\geq 0 \qquad \text{in }\R
\end{equation}
for some bounded function $d$. Assume also that, for some
nonempty closed set $H\subset\R$, one has
$w> 0$ in $H$ and that $d$ is continuous and nonnegative in $\R\setminus H$.
Then, $w>0$ in $\R$.

Let now $f$ and $v$ be a nonlinearity and a layer as in Theorem~\ref{asympNL}.
We then have
\begin{equation}\label{linv}
(-\partial_{xx})^s v' - f'(v) v'=0 \quad\text{ in } \R.
\end{equation}

To prove the upper bound for $v'$ in \eqref{asymptder}, we take $t$ large enough
such that
$2t^{-1}<\min\{-f'(-1),-f'(1)\}$. Then, for any positive constant $C>0$,
$$
w:=C\varphi^t - v'
$$
satisfies, by \eqref{vtsuper} and \eqref{linv}, $(-\partial_{xx})^s w+ 2t^{-1} w \geq 0$ 
for $|x|$ large enough, say for $x$ in the complement of a compact interval $H$.
Next, take the constant $C>0$ so that $w\geq 1$ in the compact set $H$, and
define now $d$ in $H$ so that $(-\partial_{xx})^s w+ d w = 0$ in $H$
---recall that $w\geq 1$ in $H$ and hence $d$ is well defined and bounded in $H$.  
We take $d= 2t^{-1}$ in $\R\setminus H$.
Thus, \eqref{linMP} is satisfied and, since $w\to 0$ at infinity, 
the maximum principle above leads to
$w>0$ in $\R$. This is the desired upper bound for $v'$ in \eqref{asymptder},
since $\varphi^t$ satisfies \eqref{vtlim}.

To prove the lower bound for $v'$ in \eqref{asymptder}, we proceed in the same way but
replacing the roles of $v'$ and $\varphi^t$. For this, we now take $t>0$ small enough
such that
$\max\{-f'(-1),-f'(1)\}<2^{-1}t^{-1}$. Thus,
$\tilde w:=C v'- \varphi^t$ satisfies $(-\partial_{xx})^s \tilde w+ 2^{-1}t^{-1} \tilde w 
\geq 0$ for $|x|$ large enough. One proceeds exactly as before to obtain
$\tilde w>0$ in $\R$ for $C$ large enough, 
which is the desired lower bound for $v'$ in \eqref{asymptder}.
\hfill\qed
\medskip

It remains to establish Theorem \ref{examen}. 
For this, we use the following well-known technical lemma due to G. P\'olya \cite{Pol},
1923. We prove it here for completeness; in fact, the proof as explained in \cite{Pol}
only works for $s\leq 1/2$. For $s>1/2$, we follow the proof given in \cite{Kol}.

\begin{lem}\label{calcul}
For $\kappa > 0 $ and $s \in (0,1)$, we have 
$$\lim_{x \to +\infty} \int_0^\infty \sin(z) z^{\kappa s-1}e^{-(z/x)^{2s}}\,dz
=\sin(\kappa s \pi/2)\Gamma(\kappa s). $$
\end{lem}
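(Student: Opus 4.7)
\medskip
\noindent\textbf{Proof plan.}
The target value $\sin(\kappa s\pi/2)\Gamma(\kappa s)$ is the classical Mellin transform of $\sin$, obtainable from the contour identity
$$\int_0^\infty z^{\kappa s-1}e^{iz}\,dz=e^{i\kappa s\pi/2}\Gamma(\kappa s)$$
(valid as an improper integral for $0<\kappa s<1$, and by analytic continuation otherwise) upon taking imaginary parts. The plan is therefore to pass to the limit $x\to\infty$ inside the integral, viewing $e^{-(z/x)^{2s}}$ as a regularizer that converges pointwise to $1$. The obstruction is that $z^{\kappa s-1}$ is not integrable at infinity, so one needs a contour deformation to produce a genuine integrable majorant.

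The first step is to write $\sin z=\mathrm{Im}\,e^{iz}$ and set
$$\Phi(x):=\int_0^\infty z^{\kappa s-1}e^{iz-(z/x)^{2s}}\,dz,$$
so that the integral in question equals $\mathrm{Im}\,\Phi(x)$. I then deform the contour $[0,\infty)$ to a ray $\{re^{i\theta}:r\ge 0\}$ in the open upper half-plane. Along such a ray, $|e^{iz}|=e^{-r\sin\theta}$ decays exponentially for $\theta\in(0,\pi/2]$, while $|e^{-(z/x)^{2s}}|=e^{-\cos(2s\theta)(r/x)^{2s}}\le 1$ as long as $\cos(2s\theta)\ge 0$, i.e., $\theta\le\pi/(4s)$. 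The deformation is justified by Cauchy's theorem on a quarter-sector: the large arc of radius $R\to\infty$ vanishes by the exponential decay of $e^{iz}$, and the small arc near the origin vanishes thanks to $\kappa s>0$.

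When $s\le 1/2$ (following P\'olya), I take $\theta=\pi/2$: setting $z=ir$,
$$\Phi(x)=i^{\kappa s}\int_0^\infty r^{\kappa s-1}e^{-r}e^{-e^{i\pi s}(r/x)^{2s}}\,dr,$$
and since $\mathrm{Re}(e^{i\pi s})=\cos(\pi s)\ge 0$, the regularizing factor has modulus at most $1$, providing the dominant $r^{\kappa s-1}e^{-r}\in L^1(0,\infty)$. Dominated convergence yields $\Phi(x)\to i^{\kappa s}\Gamma(\kappa s)=e^{i\kappa s\pi/2}\Gamma(\kappa s)$, whose imaginary part is the claim. When $s>1/2$, the choice $\theta=\pi/2$ fails because $\cos(\pi s)<0$ would make the regularizer grow exponentially; instead, following \cite{Kol}, I fix any $\theta\in(0,\pi/(4s))$ (nonempty because $\pi/(4s)<\pi/2$), apply the same dominated-convergence argument to obtain $\lim\Phi(x)=e^{i\theta\kappa s}\int_0^\infty r^{\kappa s-1}e^{ire^{i\theta}}\,dr$, and evaluate this integral via the identity $\int_0^\infty r^{\kappa s-1}e^{-\lambda r}\,dr=\lambda^{-\kappa s}\Gamma(\kappa s)$ applied with $\lambda=-ie^{i\theta}$ (which has positive real part $\sin\theta$): the result is again $e^{i\kappa s\pi/2}\Gamma(\kappa s)$.

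The main obstacle is the case $s>1/2$: one must choose the intermediate angle $\theta$ so that the exponential decay of $e^{iz}$ and the boundedness of the regularizer are simultaneously available on the new contour, and one must verify carefully that the two arcs of the Cauchy quarter-sector contribute negligibly in the limit. This is exactly the issue that the P\'olya argument ($\theta=\pi/2$) cannot resolve, forcing the case split indicated in the paper.
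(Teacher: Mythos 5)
Your proof is correct and follows essentially the same route as the paper's: write $\sin z=\mathrm{Im}\,e^{iz}$, rotate the contour to a ray on which $e^{iz}$ decays while the regularizer $e^{-(z/x)^{2s}}$ stays bounded, split at $s=1/2$, and conclude by dominated convergence. The only (harmless) variation is in the case $s>1/2$, where the paper rotates to the borderline angle $\pi/(4s)$ and then performs a second rotation of the limiting integrand up to $\pi/2$, whereas you stop at an interior angle $\theta<\pi/(4s)$ and evaluate the limit directly via $\int_0^\infty r^{\kappa s-1}e^{-\lambda r}\,dr=\lambda^{-\kappa s}\Gamma(\kappa s)$ with $\lambda=-ie^{i\theta}$; just note that the vanishing of the large arc near the real axis must be attributed to the regularizer $e^{-x^{-2s}\mathrm{Re}(z^{2s})}$ rather than to $e^{iz}$ (which does not decay there), exactly as in the paper's estimate.
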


\begin{proof}
For every $x>0$, we have 
\begin{equation*}
\begin{split}
\int_0^\infty \sin(z) z^{\kappa s-1}e^{-(z/x)^{2s}}\,dz &=\text{Im}\int_0^\infty 
 e^{iz}z^{\kappa s-1}e^{-(z/x)^{2s}}\,dz \\
&=\text{Im} \int_0^\infty h_x(z) \,dz ,
\end{split}
\end{equation*}
where 
$$h_x(z):= z^{\kappa s-1}e^{iz-(z/x)^{2s}}. $$
Let us also denote
$$h_\infty(z):= z^{\kappa s-1}e^{iz}. $$

For $0\leq \theta\leq \pi/2$, let $\gamma_\theta$ be the half-line from the origin
making an angle $\theta$ with the positive $x$-axis. We will next see that,
for certain angles $\theta$,
$\text{Im}\int_{\gamma_\theta} h_x(z)\,dz$ are all equal and independent of those $\theta$. 
For this, given two angles $0\leq \theta_1 < \theta_ 2\leq \pi/2$
and $R>0$, we integrate counter-clockwise on the contour given by
the segments of length $R$ starting from $0$ on  $\gamma_{\theta_1}$ and on
$\gamma_{\theta_2}$, and by the arc $\Gamma^R_{\theta_1,\theta_2}$
of radius $R$ with center at the origin and
joining the two end points of the previous segments. We also need to remove a 
neighborhood of zero and add a
small arc  with center at the origin connecting the two half-lines.
The integrals of $h_x$ and of $h_\infty$ in this small arc will tend to zero as the radius
tends to zero, since $|h_x(z)|+|h_\infty(z)|\leq C|z|^{\kappa s-1}$ near the origin.

The key point is to make sure that the integral of $h_x$, and later of $h_\infty$,
on the arc $\Gamma^R_{\theta_1,\theta_2}$ of radius $R$ tends to zero as $R\to\infty$ 
if we choose the angles $0\leq \theta_1 < \theta_ 2\leq \pi/2$ correctly. 
Note that if $z\in \CC$ belongs to such an arc, then $z$ belongs to the sector
$$
S_{\theta_1,\theta_2}:=\{z\in \CC\, :\, \theta_1\leq \text{Arg}(z)\leq \theta_2\}.
$$
To guarantee the convergence to zero of the integral on the arc, note that
\begin{equation}\label{boundarcs}
|h_x(z)|= |z|^{\kappa s-1}\exp\{-\text{Im}(z)-x^{-2s}\text{Re}(z^{2s})\}
\end{equation}
and
\begin{equation}\label{boundinfarc}
|h_\infty(z)|= |z|^{\kappa s-1}\exp\{-\text{Im}(z)\}
\end{equation}
for all $z\in\CC$ in the first quadrant.

We need to distinguish two cases.

\smallskip
{\it Case}~1. Suppose that $s\leq 1/2$. In this case we take 
$\theta_1=0$ and $\theta_2=\pi/2$. Then, if $z$ lies in the sector $S_{0,\pi/2}$
(the first quadrant), then $z^{2s}$ is also in the first quadrant, since $2s\leq 1$. 
Thus, the real and imaginary parts appearing in \eqref{boundarcs}  are both nonnegative,
and at least one of them positive up to the boundary of the quadrant. Thus, by  \eqref{boundarcs},
$|h_x|\to 0$ exponentially fast ---as $\exp\{-c(x)|z|^{2s}\}$--- uniformly in all the quadrant.
Hence, the integral on the arc $\Gamma^R_{0,\pi/2}$ tends to zero as $R\to\infty$.
We deduce
\begin{equation*}
\begin{split}
\int_0^\infty \sin(z) z^{\kappa s-1}e^{-(z/x)^{2s}}\,dz &=
\text{Im} \int_{\gamma_0} h_x(z) \,dz = \text{Im} \int_{\gamma_{\pi/2}} h_x(z) \,dz \\
 &= \text{Im}\left\{  e^{i \kappa s \pi /2}  \int_0^\infty  y^{\kappa s-1}
e^{-y -i^{2s}(y/x)^{2s}}\,dy \right\}.
\end{split}
\end{equation*}
Note that the function in the last integral
is integrable since 
$$
|e^{-y -i^{2s}(y/x)^{2s}}|=|e^{-y -(\cos(s\pi)+i\sin(s\pi))(y/x)^{2s}}|
=e^{-y -\cos(s\pi)(y/x)^{2s}}\leq e^{-y}
$$ 
due to $s\leq 1/2$.
Thus, the limit as $x\to +\infty$ exists and is equal to
\begin{eqnarray}\label{case1}
\lim_{x\to +\infty} \int_0^\infty \sin(z) z^{\kappa s-1}e^{-(z/x)^{2s}}\,dz 
&=& \text{Im}\left\{  e^{i \kappa s \pi /2}  \int_0^\infty  y^{\kappa s-1}
e^{-y} \,dy \right\} \nonumber \\
&=& \sin( \kappa s \pi /2)  \int_0^\infty  y^{\kappa s-1}
e^{-y} \,dy \nonumber \\
&=& \sin(\kappa s \pi/2)\Gamma(\kappa s),
\end{eqnarray}
as claimed.

\smallskip
{\it Case}~2. Suppose now that $1/2<s<1$. In this case \eqref{boundarcs} does not tend
to zero at infinity in all the first quadrant, since $2s>1$ and thus $\text{Re}(z^{2s})$
becomes negative somewhere in the quadrant. Here, we need to take
$$
\theta_1 =0 \quad\text{ and }\quad \theta_2=\frac{\pi}{4s}.
$$ 
Now, in the sector $S_{0,\pi/(4s)}$, the real and imaginary parts appearing 
in \eqref{boundarcs}  are both nonnegative,
and at least one of them positive up to the boundary of the sector. Thus, as before, 
we now deduce
\begin{equation*}
\begin{split}
\int_0^\infty \sin(z) z^{\kappa s-1}e^{-(z/x)^{2s}}\,dz &=
\text{Im} \int_{\gamma_0} h_x(z) \,dz = \text{Im} \int_{\gamma_{\pi/(4s)}} h_x(z) \,dz .
\end{split}
\end{equation*}
Note that in the last integral on $\gamma_{\pi/(4s)}$, we have
\begin{eqnarray*}
|h_x(z)| &= & |z|^{\kappa s-1}\exp\{-\text{Im}(z)-x^{-2s}\text{Re}(z^{2s})\} \\
&=& |z|^{\kappa s-1}\exp\{-\text{Im}(z)\}=|h_\infty(z)|
\end{eqnarray*}
for $z\in \gamma_{\pi/(4s)}$. Besides, by the last expression, $h_\infty$
is integrable on $\gamma_{\pi/(4s)}$. Thus, by dominated convergence, we have
\begin{equation}\label{case2}
\lim_{x\to +\infty} \int_0^\infty \sin(z) z^{\kappa s-1}e^{-(z/x)^{2s}}\,dz =
\text{Im} \int_{\gamma_{\pi/(4s)}} h_\infty(z) \,dz .
\end{equation}

Finally, for this last integral we work on the sector $S_{\pi/(4s), \pi/2}$.
By \eqref{boundinfarc}, $h_\infty(z)$ tends to zero exponentially fast and uniformly
as $|z|\to\infty$ on the sector. Thus,
\begin{eqnarray*}
\text{Im} \int_{\gamma_{\pi/(4s)}} h_\infty(z) \,dz &=&
\text{Im} \int_{\gamma_{\pi/2}} h_\infty(z) \,dz \\
&=& \text{Im}\left\{  e^{i \kappa s \pi /2}  \int_0^\infty  y^{\kappa s-1}
e^{-y} \,dy \right\}.
\end{eqnarray*}
Recalling \eqref{case2}, one concludes as in \eqref{case1}.
\end{proof}

Finally, we can prove our results on the explicit layer.

\medskip
\noindent
{\it Proof of Theorem}~\ref{examen}.
Let $v^t_s$ be defined by \eqref{explicitv}. It is clear that 
\begin{equation}\label{formt}
 v^t_s(x)=v^1_s(t^{-1/(2s)}x).
\end{equation}
Hence, by the definition \eqref{deflapl}
of the fractional Laplacian,
we have $(-\partial_{xx})^sv^t_s(x)=t^{-1}(-\partial_{xx})^sv^1_s(t^{-1/(2s)}x)$.
Thus, having proved all the statements for $v^1_s$, they will also hold for
$v^t_s$ with nonlinearity $f^t_s(v)=t^{-1}f^1_s(v)$.

Hence, we may take $t=1$. To simplify notation, we denote 
$$
v:=v^1_s \qquad\text{ and }\qquad f:=f^1_s.
$$
{From} $v'(x)=2p_s(1,x)$ and expression \eqref{explicitp}, it is clear that
$v\in C^\infty(\RR)$. By expression \eqref{defv}, we have $v(-\infty)=-1$.  
Since $v'=2q_s=2p_s(1,\cdot)>0$, $v$ is increasing.

The fact that $v(+\infty)=1$ is a consequence of \eqref{propps}, 
$\int_\R p_s(1,y)\,dy=1$.
It also follows from expression \eqref{explicitv}
and the well-known fact that, in principal value sense, 
$\int_0^\infty\sin(z)z^{-1}\, dz =\pi/2$.
This can also be proved adding a factor $z^{\kappa s}$ in the integral
\eqref{explicitv}, using then Lemma~\ref{calcul} and noting that 
$\sin(\kappa s \pi/2)\Gamma(\kappa s)=\sin(\kappa s \pi/2)(\kappa s)^{-1}
\Gamma(\kappa s+1)\to \pi/2$ as $\kappa\downarrow 0$.
 
We now prove that there exists a function $f$ such that 
$$(-\partial_{xx})^s v=f(v) \,\,\,\,\text{in}\,\,\RR.$$
For this, we use the expression \eqref{explicitq} and that $p_s$ solves the fractional
heat equation \eqref{fracHeat}.  Because of the commutation of the derivative with the 
fractional Laplacian, we deduce
\begin{eqnarray*}
\{ (-\partial_{xx})^s v\}'(x) &=& (-\partial_{xx})^s v' (x)=2(-\partial_{xx})^s q_s (x) =
-2\partial_t p_s(1,x)\\
&=& \frac{1}{s}\left\{ q_s(x) + x q_s'(x)\right\}.
\end{eqnarray*}
Therefore, integrating by parts,  
\begin{eqnarray*}
(-\partial_{xx})^s v(x)
= \frac{1}{s}\int_{-\infty}^x \{q_s(z) + z q_s'(z) \}\,dz
=\frac{1}{s} x q_s(x)=\frac{1}{2s} x v'(x).
\end{eqnarray*}

Since $v'>0$, the $C^\infty$ function $v=v(x)$ is invertible on $\RR$, with inverse $x=x(v)$,
a $C^\infty$ function on the open interval $(-1,1)$. 
We now set 
\begin{equation}\label{deff}
 f(v):=\frac{1}{2s} x(v) v'(x(v)),
\end{equation}
so that our semilinear fractional equation is satisfied.
We know that $f\in C^\infty (-1,1)$. Also, since $v$ is an odd function, 
its inverse $x$ is also odd and therefore $f$ is odd, by \eqref{deff}.
This expression also gives that $f>0$ in $(0,1)$. 

It remains to verify that $f \in C^1([-1,1])$ once we set $f(\pm 1)=0$ and $f'(\pm 1)=-1$,
and that $f$ is twice differentiable in $[-1,1]$ and having  values for $f''(\pm 1)$ 
given by \eqref{secondf} with $t=1$. 
It also remains to establish the asymptotic behavior of $v'$. 

For all this, using \eqref{explicitp} we compute 
\begin{eqnarray}\label{lima}
(\pi/2) v'(x) &=& \pi q_s(x)=\frac{1}{x} \int_0^\infty \cos(z) e^{-(z/x)^{2s}}\,dz\nonumber \\
&=& \frac{1}{x} \int_0^\infty \{\sin(z)\}' e^{-(z/x)^{2s}}\,dz \nonumber\\
&=& 2s x^{-1-2s} \int_0^\infty \sin (z) z^{2s-1}e^{-(z/x)^{2s}}\,dz ,
\end{eqnarray}
by integration by parts. Hence using Lemma \ref{calcul} with $\kappa=2$, we deduce 
\begin{equation}\label{limab}
\lim_{x \to +\infty} x^{1+2s}v'(x)=\lim_{x \to +\infty} 2x^{1+2s}q_s(x)=
\frac{4s}{\pi}\sin(\pi s)\Gamma(2s),
\end{equation}
as claimed in \eqref{limder} ---for other values of $t$, simply use \eqref{formt}. 
In particular, $\lim_{x \to +\infty} xv'(x)=0$ and thus, by \eqref{deff},
$f$ is continuous on $[-1,1]$ defining $f(\pm 1)=0$.
In addition, we also deduce
\begin{equation}\label{limapr}
 1-v(x)=\int_x^\infty v'(y)\,dy= \frac{2}{\pi}\sin(\pi s)\Gamma(2s)   x^{-2s}+\text{o}(x^{-2s})
\end{equation}
as $x\to +\infty$.

Next, we differentiate \eqref{deff},
that is, $f(v(x))=(2s)^{-1}xv'(x)=(2s)^{-1}x2q_s(x)$, to obtain
$$
f'(v)v'=\frac{1}{2s}\left \{ v'+x(v) 2q_s'(x(v)) \right \}
$$
and hence
\begin{equation}\label{expfprime}
 f'(v)=\frac{1}{2s}\left \{ 1+x(v)\frac{q'_s(x(v))}{q_s(x(v))} \right \}. 
\end{equation}

Thus, using \eqref{explicitp} we compute
\begin{eqnarray*}
\pi x q'_s(x) &=& -\int_0^\infty xr \sin(xr) e^{-r^{2s}}\,dr\\
&=& -x^{-1}\int_0^\infty z\sin(z)e^{-(z/x)^{2s}}\,dz\\
&= & -x^{-1}\int_0^\infty \{ \sin(z)-z\cos(z)\}'e^{-(z/x)^{2s}}\,dz\\
&= & -2s x^{-1-2s}\int_0^\infty \{\sin(z)-z\cos(z)\}z^{2s-1} e^{-(z/x)^{2s}}\,dz. 
\end{eqnarray*}
We also compute $\pi \{ (1+2s)q_s+xq'_s\}$ by adding \eqref{lima} (multiplied by
$1+2s$) to the previous expression. Integrating by parts,  
and at the end invoking Lemma~\ref{calcul}
with $\kappa=4s$, we obtain
\begin{eqnarray}\label{limc}
& & \hspace{-1.5cm} \pi \{ (1+2s)q_s+xq'_s\} = \nonumber \\
&=&   2sx^{-1-2s} \int_0^\infty 
\{2s\sin(z)+z\cos(z)\} z^{2s-1} e^{-(z/x)^{2s}}\,dz\nonumber\\
 &=& 2sx^{-1-2s} \int_0^\infty \{\sin(z)z^{2s}\}'e^{-(z/x)^{2s}}\,dz\nonumber\\
 &=& (2s)^2 x^{-1-4s} \int_0^\infty \sin(z)z^{4s-1} e^{-(z/x)^{2s}}\,dz\nonumber\\
&=& x^{-1-4s}\{4s^2\sin(2\pi s) \Gamma (4s)+\text{o}(1)\}\nonumber\\
&=& x^{-1-4s}\{8s^2\sin(\pi s) \cos(\pi s)\Gamma (4s)+\text{o}(1)\}
\end{eqnarray}
as $x \to +\infty$.

Therefore, from \eqref{expfprime}, \eqref{limc}, and \eqref{limab}, one has
\begin{equation}\label{formfpri}
 f'(v(x)) = -1+\frac{1}{2s}\,\, \frac{(1+2s)q_s+xq_s'}{q_s}=-1+\text{O}(x^{-2s}).
\end{equation} 
Thus, setting $f'(\pm 1)=-1$ and using that $f'$ is even, 
we have that $f$ is differentiable at $\pm 1$. 

Finally, using \eqref{formfpri}, \eqref{limc}, \eqref{limapr}, and \eqref{limab}, we have 
\begin{eqnarray*}
& &  \hspace{-2cm}\frac{f'(v(x))-f'(1)}{v(x)-1} = \frac{f'(v(x))+1}{v(x)-1}\nonumber \\ 
 &=&\frac{1}{2s}\,\, \frac{(1+2s)q_s+xq_s'}{(v-1)q_s}
\to -\pi\frac{\cos(\pi s) \Gamma(4s)}{\sin(\pi s)(\Gamma(2s))^2}
\end{eqnarray*}
as $x \to +\infty$. This establishes that  $f\in C^1([-1,1])$ and also that
$f$ is twice differentiable in all of $[-1,1]$ with
\begin{equation}\label{values2}
f''(\pm 1)=\mp \pi \frac{\cos(\pi s)}{\sin(\pi s)}\,\frac{\Gamma(4s)}{(\Gamma(2s))^2}.
\end{equation}
The proof is now complete.
\hfill\qed

\section{Minimizers of the Dirichlet-Neumann problem in bounded domains}

In this section, we concentrate on the existence of absolute
minimizers of the functional
$E_\Omega(u)$ on bounded domains $\Omega$. 
This is an important step since, as in \cite{CSM}, the existence theory of
layer solutions goes through a localization argument in half-balls of
$\R^{n+1}_+$. 

Let $\Omega\subset\R^{n+1}_+$ be a bounded Lipschitz domain.
We define the following subsets of $\partial\Omega$:
\begin{align}
\label{notation1}
& \partial^0\Omega= \{(x,0)\in \partial\R^{n+1}_+ \, : \,
B_\epsilon^+(x,0)\subset\Omega \, \text{ for some }
\epsilon>0\} \quad
\text{ and}
\\  
\label{notation2}
& \partial^+\Omega=\overline{\partial\Omega\cap\R^{n+1}_+}.
\end{align}

Let $H^1(\Omega,y^a)$ denote the weighted Sobolev space 
$$H^1(\Omega,y^a) =\left \{ u: \Omega \rightarrow \RR\, : \, y^a (u^2 + 
|\nabla u|^2) \in L^1(\Omega)\right \}$$ 
endowed with its usual norm.

Let $u\in C^\beta({\overline\Omega})\cap H^1(\Omega,y^a)$ be a given function
with $|u|\le 1$, where $\beta\in (0,1)$. 
We consider the energy functional
\begin{equation}\label{enerfunct}
E_{\Omega}(v)=
\int_{\Omega}\dfrac{y^a}{2}|\nabla v|^2
+\int_{\partial^0\Omega}\frac{1}{1+a}G(v) 
\end{equation}
in the class 
$$
{\mathcal C}_{u,a}(\Omega)=\{v \in H^1(\Omega,y^a) : -1 \le v \le 1 
\text{ a.e. in }\Omega  
\text{ and } v\equiv u \text{ on }\partial^+\Omega\},
$$ 
which contains $u$ and thus is nonempty. 

The set ${\mathcal C}_{u,a}(\Omega)$ is a closed convex subset of the affine space 
\begin{equation}
\label{defaffine}
H_{u,a}(\Omega)=\{v \in H^1(\Omega,y^a) : v\equiv u \text{ on }\partial^+\Omega\},
\end{equation}
where the last condition should be understood as that
$v-u$ vanishes on $\partial^+\Omega$ in the weak sense.

\begin{lem}\label{existmin}
Let $n\geq 1$ and $\Omega\subset\R^{n+1}_+$ be a bounded Lipschitz domain. 
Let $u\in C^\beta({\overline\Omega})\cap H^1(\Omega,y^a)$
be a given function with $|u|\le 1$, where $\beta\in (0,1)$. 
Assume that 
\begin{equation}\label{subsuper}
f(1)\le 0 \le f(-1).
\end{equation}

Then, the functional
$E_{\Omega}$ admits an absolute minimizer $w$ in ${\mathcal C}_{u,a}(\Omega)$.
In particular, $w$ is a weak solution of 
\begin{equation}
\label{probw}
\begin{cases}
L_a w=0&\text{ in } \Omega\\ 
(1+a) \dfrac{\partial w}{\partial\nu^a} =f(w)&\text{ on }\partial^0\Omega \\
w=u &\text{ on }\partial^+\Omega .
\end{cases}
\end{equation}
Moreover, $w$ is a stable solution of \eqref{probw},
in the sense that
\begin{equation}
\label{stabw}
\int_\Omega y^a |\nabla\xi|^2 - \int_{\partial^0\Omega} \frac{1}{1+a}f'(w) \xi^2 \ge 0
\end{equation}
for every $\xi \in H^1(\Omega, y^a)$ 
such that $\xi\equiv 0$ on 
$\partial^+\Omega$ in the weak sense.
\end{lem}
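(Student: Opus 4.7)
The plan is to apply the direct method of the calculus of variations to produce a minimizer $w$, then exploit the convexity of $\mathcal{C}_{u,a}(\Omega)$ together with one-sided truncations and the sign hypothesis \eqref{subsuper} to upgrade the constrained variational inequality to the free Euler--Lagrange equation, and finally derive stability from a second-order expansion.

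For existence, every $v \in \mathcal{C}_{u,a}(\Omega)$ satisfies $|v| \le 1$, so the potential part of $E_\Omega(v)$ is bounded below by $-\|G\|_{L^\infty([-1,1])}|\partial^0\Omega|/(1+a)$ and $E_\Omega$ is bounded below. For a minimizing sequence $\{v_k\} \subset \mathcal{C}_{u,a}(\Omega)$, the bound on $E_\Omega(v_k)$ yields a uniform bound on $\int_\Omega y^a |\nabla v_k|^2$, while $|v_k| \le 1$ and $\int_\Omega y^a < \infty$ (using $a \in (-1,1)$ and boundedness of $\Omega$) control $\|v_k\|_{L^2(\Omega, y^a)}$; hence $\{v_k\}$ is bounded in $H^1(\Omega, y^a)$. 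A weakly convergent subsequence $v_k \rightharpoonup w$ lies in the weakly closed convex set $\mathcal{C}_{u,a}(\Omega)$. Weak lower semicontinuity of the quadratic gradient term is standard; for the potential term I invoke compactness of the trace $H^1(\Omega, y^a) \hookrightarrow L^2(\partial^0\Omega)$ (valid since $y^a$ is an $A_2$-weight on the bounded Lipschitz domain, as recorded in \cite{CS1}) to get $v_k \to w$ a.e.\ on $\partial^0\Omega$ along a further subsequence, and dominated convergence with majorant $\|G\|_{L^\infty([-1,1])}$ gives convergence of the potential term. Thus $w$ is an absolute minimizer.

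To extract the free Euler--Lagrange equation, I first use the convexity of $\mathcal{C}_{u,a}(\Omega)$: for any $v \in \mathcal{C}_{u,a}(\Omega)$ the function $\lambda \mapsto E_\Omega(w + \lambda(v - w))$ on $[0,1]$ is minimized at $\lambda = 0$, so its right derivative at $0$ yields
\[ \int_\Omega y^a \nabla w \cdot \nabla (v-w) - \int_{\partial^0\Omega} \tfrac{1}{1+a} f(w)(v-w) \ge 0. \]
Given $\varphi \ge 0$ with compact support in $\Omega \cup \partial^0\Omega$ and $\varphi \equiv 0$ on $\partial^+\Omega$, I take the one-sided truncated competitor $v^+_\delta := \min(w + \delta \varphi, 1) \in \mathcal{C}_{u,a}(\Omega)$, so that $v^+_\delta - w = \min(\delta \varphi, 1-w)$. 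Dividing the variational inequality by $\delta$ and passing $\delta \downarrow 0$, using that $\nabla w = 0$ a.e.\ on $\{w = 1\}$ to handle the transient contribution from $\{\delta \varphi \ge 1 - w\}$, I obtain
\[ \int_\Omega y^a \nabla w \cdot \nabla \varphi - \int_{\partial^0\Omega \cap \{w<1\}} \tfrac{1}{1+a} f(w) \varphi \ge 0. \]
Since $f(1) \le 0$ and $\varphi \ge 0$, the missing contribution $\int_{\{w=1\}} \tfrac{1}{1+a} f(1)\varphi$ is nonpositive, so the inequality remains valid when integrated over all of $\partial^0\Omega$. The symmetric argument with $v^-_\delta := \max(w - \delta \varphi, -1)$ and $f(-1) \ge 0$ gives the reverse inequality. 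Splitting $\varphi = \varphi^+ - \varphi^-$ produces the full Euler--Lagrange identity $\int_\Omega y^a \nabla w \cdot \nabla \varphi - \int_{\partial^0\Omega} \tfrac{1}{1+a} f(w) \varphi = 0$ for every admissible $\varphi$, i.e., the weak form of \eqref{probw}.

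Finally, for stability I take $\xi \in H^1(\Omega, y^a)$ compactly supported with $\xi \equiv 0$ on $\partial^+\Omega$ and consider $v_t := \min(\max(w + t\xi, -1), 1) \in \mathcal{C}_{u,a}(\Omega)$ for small $|t|$. Expanding $E_\Omega(v_t) \ge E_\Omega(w)$ to second order in $t$, the first-order term $\int_\Omega y^a \nabla w \cdot \nabla(v_t - w) - \int_{\partial^0\Omega} \tfrac{1}{1+a} f(w)(v_t - w)$ vanishes by the Euler--Lagrange identity just proved (applied to the admissible test function $v_t - w$), and the remaining second-order terms reduce, after using $\nabla w = 0$ a.e.\ on $\{w = \pm 1\}$ to control the truncation errors at order $o(t^2)$, to $\tfrac{t^2}{2}\bigl(\int_\Omega y^a |\nabla \xi|^2 - \int_{\partial^0\Omega} \tfrac{1}{1+a} f'(w) \xi^2\bigr) + o(t^2) \ge 0$; dividing by $t^2$ and letting $t \to 0$ gives \eqref{stabw}, with a density argument extending to all $\xi$ vanishing on $\partial^+\Omega$. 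The main obstacle lies in the middle step: the one-sided truncation perturbs $w$ not by $\delta \varphi$ but by $\min(\delta \varphi, 1-w)$, so the passage to the limit requires exploiting the Sobolev fact $\nabla w = 0$ a.e.\ on the level sets $\{w = \pm 1\}$ together with the sign hypothesis $f(1) \le 0 \le f(-1)$ to prevent any spurious Lagrange-multiplier term from spoiling the free Euler--Lagrange identity.
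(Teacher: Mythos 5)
Your existence argument and your derivation of the Euler--Lagrange equation are correct, but note that they follow a genuinely different route from the paper's. The paper never minimizes over the constrained convex set: it extends $f$ by the constant values $f(\pm 1)$ outside $[-1,1]$, minimizes the corresponding functional $\tilde E_\Omega$ over the whole affine space $H_{u,a}(\Omega)$ (coercivity survives because $\tilde G$ has linear growth), reads off the first \emph{and second} variations with no constraint in the way, and only afterwards proves $-1\le w\le 1$ by testing the first variation with $(w-1)^+$ and using \eqref{subsuper}. You instead keep the constraint and kill the potential Lagrange multiplier on the contact set by one-sided truncated competitors together with the sign of $f(\pm 1)$. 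For the first variation this works: the extra term $-\delta^{-1}\int_{\{\delta\varphi\ge 1-w\}}y^a|\nabla w|^2$ has the favourable sign and may be discarded, and the missing boundary contribution on $\{w=1\}$ is controlled by $f(1)\le 0$.

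The genuine gap is in the stability step. Writing $\tau_t:=v_t-w$, the Dirichlet part of $E_\Omega(v_t)-E_\Omega(w)$ beyond first order equals $\frac{t^2}{2}\int_{U_t}y^a|\nabla\xi|^2+\frac12\int_{V_t}y^a|\nabla w|^2$, where $U_t=\{|w+t\xi|<1\}$ and $V_t=\Omega\setminus U_t$ is the set where the truncation is active and $\nabla\tau_t=-\nabla w$. To reach \eqref{stabw} you must show $\int_{V_t}y^a|\nabla w|^2=o(t^2)$. But $V_t$ is only contained in $\{1-t\|\xi\|_{L^\infty}\le |w|\le 1\}$, and the fact that $\nabla w=0$ a.e.\ on $\{|w|=1\}$ gives $\int_{V_t}y^a|\nabla w|^2\to 0$ with no rate: $o(1)$ is not $o(t^2)$, and this nonnegative quantity sits on the side of the inequality that you need to bound below by $0$ after dividing by $t^2$, so it cannot be dropped. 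Moreover, if the contact set $\{|w|=\pm1\}\cap\partial^0\Omega$ has positive measure, then on it $\tau_t^2/t^2$ converges only to $\xi^2\chi_{\{\mp t\xi>0\}}$ rather than to $\xi^2$, so the potential term does not produce the full $-\int\frac{1}{1+a}f'(w)\xi^2$ either; nothing available at this stage of the proof (no regularity, no strong maximum principle yet) rules out a contact set of positive measure or gives a quantitative decay of $|\nabla w|$ near it. This is precisely what the paper's device of minimizing the unconstrained extended functional is designed to avoid: there the second variation is computed along the free path $w+t\xi$, and the constraint $|w|\le 1$ is recovered a posteriori. To repair your argument you would either have to adopt that device or prove first that the truncation is inactive for small $t$.
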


Hypothesis \eqref{subsuper} states simply that
$-1$ and 1 are a subsolution and a supersolution, respectively,
of~\eqref{probw}.

\medskip
\noindent
{\it Proof of Lemma}~\ref{existmin}.
As in \cite{CSM}, it is useful to consider the following continuous extension $\tilde f$ of
$f$ outside $[-1,1]$:
\begin{equation*}
\tilde f (t)=
\begin{cases}
f(-1)&\text{ if } s\le -1\\ 
f(s) &\text{ if } -1\le s \le 1\\
f(1) &\text{ if } 1\le s . 
\end{cases}
\end{equation*}
Let
$$
\tilde G (s)=-\int_0^s \tilde f,
$$
and consider the new functional
$$
\tilde E_{\Omega}(v)=
\int_{\Omega}\dfrac{y^a }{2}|\nabla v|^2
+\int_{\partial^0\Omega}\frac{1}{1+a}\tilde G(v), 
$$
in the affine space
$H_{u,a}(\Omega)$ defined by \eqref{defaffine}.

Note that $\tilde G=G$ in $[-1,1]$, up to an additive constant.
Therefore, any minimizer $w$ of $\tilde E_\Omega$ in $H_{u,a}(\Omega)$ such
that $-1\le w \le 1$ is also
a minimizer of $E_\Omega$ in ${\mathcal C}_{u,a}(\Omega)$.

To show that  $\tilde E_\Omega$ admits a minimizer in
$H_{u,a}(\Omega)$, we use a standard compactness argument. Indeed, let $v\in H_{u,a}(\Omega)$. 
Since $v-u\equiv 0$ on $\partial^+\Omega$,
we can extend $v-u$ to be identically $0$ in $\R^{n+1}_+\setminus 
\Omega$, and we have $v-u\in H^1 (\R^{n+1}_+,y^a)$. By Nekvinda's result \cite{nekvinda}, 
the trace space of $H^1(\R^{n+1}_+,y^a)$ is the Gagliardo space $W^{\frac{1-a}{2},2}(\R^{n})
=H^s(\R^{n})$. The Sobolev embedding 
(see \cite{adams}) 
$$H^s(\R^{n}) \hookrightarrow L^{\frac{2n}{n-2s}}(\R^n)$$ 
(or into any $L^p(\R^{n})$ if $n=1\leq 2s$)
gives then the compactness of the inclusion  
$$
H_{u,a}(\Omega) \subset \subset L^2(\partial^0\Omega).
$$

Now, since $H_{u,a}(\Omega) \subset L^2(\partial^0\Omega)$ and 
$\tilde G$ has linear growth at infinity, 
it follows that  $\tilde E_{\Omega}$ is well 
defined, bounded below, and coercive in $H_{u,a}(\Omega)$. 
Hence, using the compactness
of the inclusion $H_{u,a}(\Omega)\subset \subset L^2(\partial^0\Omega)$,
taking a minimizing sequence in $H_{u,a}(\Omega)$
and a subsequence convergent in $L^2(\partial^0\Omega)$, 
we conclude that $\tilde E_{\Omega}$ admits an absolute
minimizer $w$ in $H_{u,a}(\Omega)$. 

Since $\tilde f$ is a continuous function, $\tilde E$ is a $C^1$
functional in $H_{u,a}(\Omega)$. 
Making first at second order variations of $\tilde E$
at the minimum $w$, we obtain that $w$ is a weak solution of~\eqref{probw} 
which satisfies~\eqref{stabw},
with $f$ and $f'$ replaced by $\tilde f$ and $\tilde f'$,
respectively, in both \eqref{probw} and~\eqref{stabw}.

Therefore, it only remains to show that the minimizer $w$
satisfies
$$
-1\le w \le 1 \quad\text{a.e. in } \Omega.
$$ 
We use that $-1$ and $1$ are, respectively, a subsolution
and a supersolution of \eqref{probw}, due to hypothesis \eqref{subsuper}.
We proceed as follows. We use that the first variation of
$\tilde E_\Omega$ at $w$ in the direction $(w-1)^+$ (the positive
part of $w-1$), is zero.  Since $|w|=|u|\le 1$ on $\partial^+\Omega$
and hence $(w-1)^+$ vanishes on $\partial^+\Omega$,
we have that $w+\epsilon (w-1)^+\in H_{u,a}(\Omega)$
for every~$\epsilon$. We deduce
\begin{equation*}
\begin{split}
0 & =\int_\Omega y^a \nabla w \, \nabla (w-1)^+ -\int_{\partial^0\Omega}
\tilde f(w) (w-1)^+ \\
& =\int_{\Omega\cap\{w\ge 1\}} y^a |\nabla (w-1)^+|^2 -
\int_{\partial^0\Omega\cap\{w\ge 1\}} f(1) (w-1)^+ \\
&\ge
\int_{\Omega} y^a |\nabla (w-1)^+|^2 ,
\end{split}
\end{equation*}
where we have used that $\tilde f(s) =f(1)$ for $s\ge 1$,
and that $f(1)\le 0$ by assumption.
We conclude that $(w-1)^+$ is constant, and hence identically zero.
Therefore, $w\le 1$ a.e. The inequality $w\ge -1$ is proved
in the same way, now using $f(-1)\ge 0$.
\hfill\qed

\section{Local minimality of layers and consequences.\\
Proof of Theorem~\ref{minimality}}

The fact that for reactions in the interior (that is, $s=1$ in our equation), 
layer solutions in $\R^n$ are necessarily local minimizers was found
by Alberti, Ambrosio, and one of the authors in \cite{AAC}. 
For the fractional case, this is the statement in Theorem~\ref{minimality} a) above. 
The proof in  \cite{AAC} also works in the fractional case, 
working with the extension problem. It uses two ingredients: the existence result from the previous
section (Lemma~\ref{existmin}) and the following uniqueness result
in the presence of a layer.

\begin{lem}
\label{uniqDN}
Assume that problem \eqref{extAlpha} admits a layer solution $u$. Then, for
every $R>0$, $u$ is the
unique weak solution of the problem 
\begin{equation}\label{prDNpm1}
\begin{cases}
L_a w=0&\text{ in } B^+_R\subset\R^{n+1}_+ \\ 
-1\le w\le 1  &\text{ in } B^+_R \\
(1+a)\dfrac{\partial w}{\partial\nu^a} =f(w)&\text{ on }\Gamma^0_R \\
w=u &\text{ on }\Gamma^+_R.
\end{cases}
\end{equation}
\end{lem}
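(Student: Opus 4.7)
The plan is to apply the sliding method in the $x_1$-direction, comparing an arbitrary weak solution $w$ of \eqref{prDNpm1} with the translates $u^t(x,y):=u(x+te_1,y)$ of the layer $u$, for $t\in\R$. By Lemma~\ref{regularity1}, $w\in C^\beta(\overline{B_R^+})$, so the pointwise comparison is legitimate.

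I would first verify the initial configuration of the sliding. Since $u(\cdot,y)=P_s(\cdot,y)*v$ with $v(\pm\infty)=\pm 1$, a standard dominated-convergence argument with the Poisson kernel gives $u^t\to\pm 1$ uniformly on the compact set $\overline{B_R^+}$ as $t\to\pm\infty$. Since $|u|<1$ in $\overline{\R^{n+1}_+}$ and $w\equiv u$ on $\Gamma_R^+$, the function $w$ cannot be identically $\pm 1$ in $\overline{B_R^+}$; together with $-1\le w\le 1$ this yields $T_\pm>0$ with $u^{T_+}>w$ and $u^{-T_-}<w$ throughout $\overline{B_R^+}$.

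Next, I set
$$
t^*:=\inf\{t\ge 0 : u^s\ge w\ \text{in}\ \overline{B_R^+}\ \text{for all}\ s\ge t\}
$$
and argue that $t^*=0$. Assume, toward a contradiction, $t^*>0$. By continuity $u^{t^*}\ge w$ in $\overline{B_R^+}$, with equality at some $P_0\in\overline{B_R^+}$. A touching on $\Gamma_R^+$ is excluded because $u_{x_1}$ solves the homogeneous linearized problem $L_a u_{x_1}=0$ in $\R^{n+1}_+$ with $(1+a)\partial_{\nu^a}u_{x_1}=f'(u)u_{x_1}$ on $\{y=0\}$ and is strictly positive on $\{y=0\}$; hence the strong maximum principle of \cite{CS1} for $L_a$ with oblique boundary condition forces $u_{x_1}>0$ in $\overline{\R^{n+1}_+}$, so that $u(\cdot+t^*e_1,\cdot)>u=w$ on $\Gamma_R^+$. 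Therefore $P_0\in B_R^+\cup\Gamma_R^0$. The nonnegative difference $\phi:=u^{t^*}-w$ satisfies $L_a\phi=0$ in $B_R^+$ together with
$$
(1+a)\frac{\partial\phi}{\partial\nu^a}=f(u^{t^*})-f(w)=c(x)\phi \quad\text{on }\Gamma_R^0,
$$
where $c(x):=\int_0^1 f'\bigl(\tau u^{t^*}(x,0)+(1-\tau)w(x,0)\bigr)\,d\tau$ is bounded. If $P_0\in B_R^+$, the strong maximum principle for the $A_2$-weighted operator $L_a$ yields $\phi\equiv 0$, contradicting $\phi>0$ on $\Gamma_R^+$. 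If $P_0\in\Gamma_R^0$, the Hopf-type boundary lemma of \cite{CS1} (applied after absorbing the bounded coefficient $c^+\phi$ into a one-sided linear inequality) gives $\partial_{\nu^a}\phi(P_0)>0$, which is incompatible with the boundary equation since its right-hand side vanishes at $P_0$. In either case we reach a contradiction, so $t^*=0$ and hence $u\ge w$ in $\overline{B_R^+}$.

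A symmetric sliding from the negative side produces $u\le w$, whence $u\equiv w$. The principal obstacle will be the rigorous application of the strong maximum principle and the Hopf lemma at a touching point on $\Gamma_R^0$ for the degenerate, $A_2$-weighted operator $L_a$ under the oblique boundary condition with bounded reaction $c(x)$; the appropriate versions of both results are proved in \cite{CS1}, and the standard device of replacing $c$ by $c^+$ in a one-sided inequality reduces the linearized mixed problem to their scope.
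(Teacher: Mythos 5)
Your overall strategy is exactly the paper's: the proof given there simply defers to Lemma~3.1 of \cite{CSM}, whose argument is the sliding method in the $x_1$-direction (legitimate because $L_a$ is translation-invariant in $x$) combined with the strong maximum principle and the Hopf boundary lemma of \cite{CS1} (Proposition~4.11 and Corollary~4.12). Your treatment of the touching point at $t^*>0$ --- excluding $\Gamma_R^+$ via $u_{x_1}>0$, and ruling out $B_R^+$ and $\Gamma_R^0$ via the strong maximum principle and the Hopf lemma applied to $\phi=u^{t^*}-w$ --- is the right argument and is what \cite{CSM} does.

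There is, however, a genuine gap at the start of the sliding. You write that since $w$ is not identically $\pm 1$ and $-1\le w\le 1$, there exist $T_\pm$ with $u^{T_+}>w$ and $u^{-T_-}<w$ on $\overline{B_R^+}$. This does not follow: $w$ could equal $1$ on part of $\overline{B_R^+}$ without being identically $1$, and at such points $u^{T_+}<1=w$ for every $T_+$, so the initial configuration fails. What you actually need is the strict bound $\sup_{\overline{B_R^+}}|w|<1$. This is obtained by first comparing $w$ with the constant solutions $\pm 1$: in the presence of a layer one has $f(1)=f(-1)=0$ (Lemma~4.8(i) of \cite{CS1}, invoked in the proof of Theorem~\ref{minimality}), so $1-w\ge 0$ satisfies $L_a(1-w)=0$ with boundary condition $(1+a)\partial_{\nu^a}(1-w)=f(1)-f(w)=c(x)(1-w)$, and the strong maximum principle plus Hopf lemma force either $w\equiv 1$ or $w<1$ on $B_R^+\cup\Gamma_R^0$; the first alternative is excluded because $w=u<1$ on $\Gamma_R^+$. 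The same for $-1$. Only with this preliminary step (and the uniform convergence $u^t\to\pm 1$ on compacts, which does require an argument beyond the pointwise limits in the definition of layer, e.g.\ your Poisson-kernel observation together with the uniform H\"older bounds) can $T_\pm$ be chosen and $t^*$ be well defined. With that repaired, your proof is the paper's proof.
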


\begin{proof}
We refer the reader to the proof of Lemma~3.1 in \cite{CSM} 
since the proof is identical in our case. 
Indeed, since the operator $L_a$ is invariant under translations in $x$, this allows 
to use the sliding method as in Lemma~3.1 of \cite{CSM} to get the uniqueness.
The only other important ingredient in the proof is the
Hopf boundary lemma; in our present context it can be found in Proposition 4.11 
and Corollary 4.12 of \cite{CS1}. 
\end{proof}

Part b) of Theorem~\ref{minimality} will follow from the
following proposition. It will  be useful also in other future arguments. 
Notice that the result for $n=1$ follows from our Modica estimate, Theorem 2.3
of \cite{CS1} (rewritten in Theorem~\ref{modthm} of the present paper). Instead, 
the following proof also works in higher dimensions
but only gives $G\ge G(L^-) = G(L^+)$ in $[-1,1]$ ---in contrast with the
strict inequality $G> G(-1) = G(1)$ obtained in dimension one from the Modica estimate
when $L^\pm=\pm 1$.

\begin{propo}\label{sameheight}
Let $u$ be a solution of \eqref{extAlpha} such that
$|u|<1$, and
\begin{equation*}
\label{limitsL2}
\lim_{x_1\to\pm\infty}u(x,0)=L^\pm  \quad\text{for every }
(x_2,\ldots ,x_{n})\in \R^{n-1},
\end{equation*}
for some constants $L^-$ and $L^+$ $($that could be equal$)$. 
Assume that $u$ is a local minimizer relative
to perturbations in $[-1,1]$. Then, 
\begin{equation*}\label{Gge}
G\ge G(L^-) = G(L^+)\quad \text{ in } [-1,1].
\end{equation*}
\end{propo}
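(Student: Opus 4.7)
My plan is to contradict local minimality by constructing explicit competitors on a long box, treating both statements in parallel. When $n=1$, both conclusions already follow from the Hamiltonian identity of Theorem~\ref{modthm}, so I focus on $n\geq 2$. Fix parameters $r,H>0$ (to be chosen large; specifically $H=r^\alpha$ for some $0<\alpha<1/(1+a)$) and let $R\to\infty$. Work in the box $C_R:=(-R,R)\times B'_r\times(0,H)\subset\R^{n+1}_+$, where $B'_r\subset\R^{n-1}$ denotes the ball of radius $r$. By the pointwise limits and the uniform $C^{2,\beta}$ regularity of $u$ (Lemma~\ref{regNL}, applied via the Poisson extension), $u\to L^\pm$ and $\nabla u\to 0$ locally uniformly in $(x',y)$ as $x_1\to\pm\infty$, so that
$$
E_{C_R}(u)=\frac{|B'_r|\,R}{1+a}\bigl(G(L^-)+G(L^+)\bigr)+o(R)\quad\text{as } R\to\infty.
$$
To prove $G(L^-)=G(L^+)$, I argue by contradiction and assume $G(L^-)>G(L^+)$. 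I introduce the competitor $w:=(1-\phi)u+\phi L^+$ where $\phi=\chi(x_1)\rho(x')\eta(y)$ is a product of smooth cutoffs: $\chi\equiv 1$ on $(-R+1,R-1)$ and $\chi\equiv 0$ off $(-R,R)$; $\rho\equiv 1$ on $B'_{r-1}$ and $\rho\equiv 0$ off $B'_r$; and $\eta\equiv 1$ on $[0,H/2]$, $\eta\equiv 0$ for $y\geq H$, with $|\eta'|\leq 2/H$. Then $w\equiv L^+$ on the core $\{\phi=1\}$, $w=u$ outside $C_R$, and $|w|\leq 1$; embedded in a sufficiently large half-ball, $\psi:=w-u$ is admissible in the sense of Definition~\ref{defsolns}\,b).

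On the core, $w\equiv L^+$ yields a boundary gain of order $\frac{c_n R r^{n-1}}{1+a}(G(L^-)-G(L^+))$ relative to $u$'s boundary integral. Using $\nabla w=(1-\phi)\nabla u-(u-L^+)\nabla\phi$ and AM--GM, the portion $\int y^a|\nabla\phi|^2(u-L^+)^2$ of $|\nabla w|^2$ splits into three transition contributions of sizes $O(r^{n-1}H^{a+1})$, $O(Rr^{n-2}H^{a+1})$, and $O(Rr^{n-1}H^{a-1})$, coming respectively from the $\chi$-, $\rho$-, and $\eta$-cutoffs (the last via $|\eta'|^2\sim H^{-2}$ and $\int_{H/2}^H y^a\,dy\leq CH^{a+1}$). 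The remaining $(1-\phi)^2|\nabla u|^2$ part is dominated by $\tfrac12\int_{C_R}y^a|\nabla u|^2$, which is itself $o(Rr^{n-1})$ once the smallness of $|\nabla u|$ at $x_1=\pm\infty$ is used to select $R$ large enough (depending on $r$ and $H$). With $H=r^\alpha$ and $0<\alpha<1/(1+a)$, the cost exponents $n-2+\alpha(a+1)$ and $n-1+\alpha(a-1)$ are both strictly less than $n-1$, so the total cost is $o(Rr^{n-1})$. Choosing $r$ large and then $R\to\infty$ gives $E_{C_R}(w)<E_{C_R}(u)$, contradicting local minimality.

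To prove $G\geq G(L^\pm)$ in $[-1,1]$ with $G_\infty:=G(L^-)=G(L^+)$, I suppose $G(t_0)<G_\infty$ for some $t_0\in[-1,1]$ and repeat the construction with $L^+$ replaced by $t_0$: the boundary savings is now of order $\frac{2c_n Rr^{n-1}}{1+a}(G_\infty-G(t_0))$, the cost structure is identical, and the same contradiction arises. The delicate point is balancing the three transition costs against the boundary savings; the crucial fact is $a-1<0$ (forced by $a\in(-1,1)$), which makes $H^{a-1}\to 0$ as $H\to\infty$: this is what allows the $y$-direction transition to be cheap in weighted energy despite the possibly singular weight $y^a$ near $y=0$. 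A secondary technical point is upgrading the pointwise convergence hypothesis to the local uniform convergence in $(x',y)$ used above, which is supplied by the uniform $C^{2,\beta}$ regularity of the extension.
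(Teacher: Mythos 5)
Your competitor construction is sound and rests on the same mechanism as the paper's proof: contradict local minimality by replacing $u$ with a constant of lower potential on a large core, and beat the weighted Dirichlet transition cost (which is sublinear in the relevant scale because $a-1<0$, equivalently because $R^{n-2s}\ll R^n$) with the linear-in-measure potential gain. The implementations differ in two instructive ways. First, the paper never works across the transition from $L^-$ to $L^+$: it slides half-balls $B_R^+(b,0,0)$ to $b\to+\infty$, so the entire competitor lives where $u\approx L^+$; the lower bound $\varliminf_b E_{B_R^+(b,0,0)}(u)\ge c(n)\varepsilon R^n$ then comes from the potential term alone, and the term $(1-\xi)^2|\nabla u|^2$ vanishes in the limit $b\to\infty$. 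Your long box contains the full transition, which forces you to estimate $\int_{C_R}y^a|\nabla u|^2$ including the middle region; this is doable (the middle is a fixed compact set contributing $o(R)$ for fixed $r,H$, and the tails decay), but it is exactly the extra work the sliding avoids. Second, the paper proves only the one-sided inequality $G\ge G(L^+)$ (and, by the reflection $x_1\mapsto -x_1$, $G\ge G(L^-)$), from which $G(L^-)=G(L^+)$ follows for free; your separate first construction for $G(L^-)=G(L^+)$ is therefore redundant. Your anisotropic scaling $H=r^\alpha$ with $\alpha<1/(1+a)$ is correct but more elaborate than the paper's single isotropic annulus of relative width $\eta$, where the choice ``$\eta$ small, then $R$ large'' closes the argument via $\eta R^n$ versus $\eta^{-2}R^{n-2s}$.

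Two points in your write-up need repair. The dispatch of $n=1$ via Theorem~\ref{modthm} is not legitimate: that theorem is proved for layer solutions (monotone, with limits $\pm1$), whereas Proposition~\ref{sameheight} assumes neither monotonicity nor $L^\pm=\pm1$; fortunately your box construction degenerates harmlessly to $n=1$ (drop the $\rho$-cutoff), so just run it there too. Also, the assertion that $\nabla u\to 0$ locally uniformly, justified by ``uniform $C^{2,\beta}$ regularity of the extension,'' is not literally available: only $u$, $\nabla_x u$ and $y^a u_y$ are H\"older up to $\{y=0\}$, and $u_y$ itself may blow up there. What you actually need, and what is true, is the decay of the weighted energy density $\int y^a|\nabla u|^2$ over unit cells as $x_1\to\pm\infty$; this follows from a Caccioppoli estimate applied to $u-L^\pm$ on translated balls (using the locally uniform convergence of the trace and the boundedness of $f$), and is precisely the content of Lemma~4.8(i) of \cite{CS1} that the paper invokes. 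With those two corrections your argument is complete.
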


\begin{proof}
It suffices to show that $G\ge G(L^-)$ and $G\ge G(L^+)$ in
$[-1,1]$. It then follows that $G(L^-)=G(L^+)$.
By symmetry, it is enough to establish that $G\ge G(L^+)$ in
$[-1,1]$. Note that this inequality, as well as the notion of
local minimizer, is independent of adding a constant to $G$.
Hence, we may assume that
$$
G(s)=0<G(L^+) \quad \text{ for some } s\in [-1,1],
$$
and we need to obtain a contradiction.
Since $G(L^+) > 0$, we have that 
$$
\frac{1}{1+a}G(t) \geq \varepsilon > 0 \quad\text{ for $t$ in
a neighborhood in $[-1,1]$ of } L^+
$$ 
for some $\varepsilon>0$.

Consider the points $(b,0,0)=(x_1=b,x_2=0,\ldots,x_n=0,y=0)$
on $\partial\R^{n+1}_+$. Since for $R>0$,
$$
E_{B^+_R(b,0,0)}(u) \geq \int_{\Gamma^0_R(b,0)}\frac{1}{1+a} G(u(x,0))\, dx
$$
and $u(x,0) \underset{x_1 \to + \infty}{\longrightarrow} L^+$, we
deduce
\begin{equation} \label{lowbou}
\varliminf_{b\to +\infty} E_{B^+_R(b,0,0)}(u) \geq c(n)\, \varepsilon R^{n}
\qquad \text {for all }  R > 0.
\end{equation}
The constant $c(n)$ depends only on $n$.

The lower bound \eqref{lowbou} 
will be a contradiction with an upper bound for the energy of $u$,
that we obtain using the local minimality of $u$.

For $R>1$, let $\xi_R$ be a smooth function in $\RR^{n+1}$ such that
$0\leq \xi_R\leq 1$,
$$
\xi_R= 
\begin{cases} \displaystyle 
1 & \text{ in} \,\,\,B^+_{(1-\eta)R} \\
0 & \text{ on} \,\,\,\RR^{n+1}_+ \backslash B_R^+ ,
\end{cases}
$$
and $|\nabla\xi_R|\leq C(n)(\eta R)^{-1}$,
where $\eta \in (0,1)$ is to be chosen later.
Let 
$$
\xi_{R,b}(x,y):=\xi_R(x_1+b,x_2,\ldots,x_n,y).
$$
Since
$$
(1-\xi_{R,b})u+\xi_{R,b}s=u+\xi_{R,b}(s-u)
$$
takes values in $[-1,1]$ and agrees with $u$ on $\Gamma^+_R(b,0,0)$,
we have that 
$$
E_{B^+_R(b,0,0)}(u) \leq E_{B^+_R(b,0,0)}(u+\xi_{R,b}(s-u)).  
$$

Next, we bound by above this last energy. Since $G(s)=0$, the potential energy is 
only nonzero in $B^+_{R}\setminus B^+_{(1-\eta)R}$, which has measure
bounded above by $C(n)\eta R^{n}$. On the other hand, since we proved
in Lemma~4.8(i) of \cite{CS1} that
$$
\|\nabla_x u\|_{L^\infty(B^+_R(x,0))} \rightarrow 0\,\,\mbox{as $x_1
  \rightarrow \pm \infty$},
$$
we deduce that
\begin{eqnarray*}
& & \hspace{-2cm}\varlimsup_{b\to +\infty} \int_{B^+_R(b,0,0)} y^a  
|\nabla\{u+\xi_{R,b}(s-u)\}|^2  \leq 
2\int_{B_R^+} y^a |\nabla\xi_R|^2 \\
&\leq&  \frac{C(n)}{\eta^2 R^2} R^n \int_0^R y^a\,dy
=  C(n)\frac{R^{n+1+a}}{\eta^2 R^2} =  C(n)\frac{R^{n-2s}}{\eta^2}.
\end{eqnarray*}
Putting together the bounds for Dirichlet and potential energies, we conclude that 
\begin{eqnarray*}
\varlimsup_{b\to +\infty} E_{B^+_R(b,0,0)}(u) &\leq&  
\varlimsup_{b\to +\infty}E_{B^+_R(b,0,0)}(u+\xi_{R,b}(s-u))\\
&\leq & C\{ \eta  R^{n} + \eta^{-2}R^{n-2s}\},
\end{eqnarray*}
for some constant $C>0$ depending only on $n$, $a$, and $G$. 

Recalling the lower bound \eqref{lowbou}, we now choose $\eta$ small enough so that
$C\eta=(1/2)c(n)\varepsilon$. In this way,  \eqref{lowbou} and the last upper bound lead to
$(1/2)c(n)\varepsilon R^n\leq C \eta^{-2}R^{n-2s}$. This is a contradiction
when $R$ is large enough. 
\end{proof}

\medskip
\noindent
{\it Proof of Theorem}~\ref{minimality}.
We proceed exactly as in the proof of Theorem~1.4 in \cite{CSM}, page 1708.

To prove part a), for $R>1$ we consider problem \eqref{prDNpm1} in a half-ball. 
Lemma \ref{existmin} gives the existence of a minimizer $w$ with $-1\leq w\leq 1$.
Note that in the lemma one needs condition \eqref{subsuper}. But in the presence of a layer,
we showed in Lemma 4.8(i) of \cite{CS1} that one has $f(-1)=f(1)=0$.

On the other hand, 
Lemma~\ref{uniqDN} states that the layer $u$ is the unique solution of
\eqref{prDNpm1} .
Thus, $u\equiv w$ in $B_R^+$. This shows that $u$ is a local minimizer.

To prove part b),
$G'(-1)=G'(1)=0$ was shown in Lemma 4.8(i) of \cite{CS1}.
We have established
the other relation, $G\geq G(-1)=G(1)$ in $[-1,1]$, in Proposition~\ref{sameheight}
above.
\hfill\qed

\section{Monotonicity and 1D symmetry of stable solutions in $\RR^2$.
Proof of Theorem~\ref{symNL}}

To prove Theorem~\ref{symNL}, we need two lemmas.
The following one, applied with $d(x)=-(1+a)^{-1}f'(u(x,0))$,
establishes an alternative 
criterium for a solution $u$ of~\eqref{extAlpha} to be stable.

\begin{lem}\label{polipo}
Let $d$ be a bounded and H\"older continuous 
function on $\partial\R^{n+1}_+$. Then,
\begin{equation}\label{stablebis}
\int_{\R^{n+1}_+} y^a |\nabla\xi|^2 +\int_{\partial\R^{n+1}_+}
d(x)\xi^2\ge 0
\end{equation}
for every function $\xi\in C^1(\overline{\R^{n+1}_+})$ with compact support in 
$\overline{\R^{n+1}_+}$, if and only if 
there exists a H\"older continuous function $\varphi$ in 
$\overline{\R^{n+1}_+}$ such that $\varphi >0$ in 
$\overline{\R^{n+1}_+}$, $\varphi\in H^1_{\rm loc}(\overline{\R^{n+1}_+},y^a)$, and
\begin{equation} \label{linear}
\begin{cases}
L_a \varphi = 0&\text{ in } \R^{n+1}_+\\ 
\dfrac{\partial\varphi}{\partial\nu^a}+d(x)\varphi = 0&\text{ on }
\partial\R^{n+1}_+ .
\end{cases}
\end{equation}
\end{lem}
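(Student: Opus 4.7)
My plan is to prove the two implications separately.

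\textbf{Sufficiency.} Given $\varphi > 0$ as in the statement and a test function $\xi \in C^1(\overline{\R^{n+1}_+})$ with compact support, I will set $\eta := \xi/\varphi$, which is a valid $H^1(\R^{n+1}_+, y^a)$ function with compact support since $\varphi$ is H\"older continuous, strictly positive and thus locally bounded below away from zero. The algebraic identity
\[
|\nabla\xi|^2 = \varphi^2|\nabla\eta|^2 + \nabla(\eta^2\varphi)\cdot\nabla\varphi,
\]
combined with the weak formulation of $L_a\varphi=0$ tested against $\eta^2\varphi$, and the boundary condition $\partial_{\nu^a}\varphi = -d\varphi$, will yield after integration by parts
\[
\int_{\R^{n+1}_+} y^a |\nabla\xi|^2 + \int_{\partial\R^{n+1}_+} d(x)\,\xi^2 = \int_{\R^{n+1}_+} y^a \varphi^2 |\nabla\eta|^2 \ge 0,
\]
which is the desired inequality \eqref{stablebis}.

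\textbf{Necessity.} For the converse I will construct $\varphi$ as a limit of principal eigenfunctions on expanding half-balls, in the spirit of the Allegretto--Piepenbrink theorem. For each $R>0$, I define
\[
\lambda_R := \inf\left\{\, \frac{\int_{B_R^+} y^a|\nabla\xi|^2 + \int_{\Gamma_R^0} d(x)\xi^2}{\int_{\Gamma_R^0} \xi^2}\, :\, \xi \in H^1(B_R^+, y^a),\ \xi\equiv 0 \text{ on } \Gamma_R^+,\ \xi|_{\Gamma_R^0}\not\equiv 0\, \right\}.
\]
The stability hypothesis \eqref{stablebis}, applied to such $\xi$ extended by zero outside $B_R^+$, gives $\lambda_R\ge 0$, and monotonicity under extension by zero forces $\lambda_R\searrow\lambda_\infty\ge 0$ as $R\to\infty$. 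The infimum is achieved by a minimizer $\varphi_R$ which, after replacing by $|\varphi_R|$ and invoking the strong maximum principle and Hopf boundary lemma for $L_a$ (Proposition~4.11 and Corollary~4.12 of \cite{CS1}), may be taken strictly positive in $B_R^+\cup\Gamma_R^0$ and solves $L_a\varphi_R=0$ in $B_R^+$, $\partial_{\nu^a}\varphi_R+d\varphi_R=\lambda_R\varphi_R$ on $\Gamma_R^0$, and $\varphi_R=0$ on $\Gamma_R^+$.

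\textbf{Passage to the limit and main obstacle.} After normalizing $\varphi_R(P_0)=1$ at a fixed interior point $P_0$, I will use the Harnack inequality for $L_a$ (uniform on compact subsets of $\overline{\R^{n+1}_+}$ as $R\to\infty$) together with the H\"older estimates of Lemma~\ref{regularity1} to extract, along a subsequence, a locally uniform and $C^\beta_{\rm loc}$ limit $\varphi$, with $\varphi(P_0)=1$ and $\varphi>0$ in $\overline{\R^{n+1}_+}$ by Harnack. This limit is H\"older continuous on $\overline{\R^{n+1}_+}$, lies in $H^1_{\rm loc}(\overline{\R^{n+1}_+}, y^a)$, and satisfies $L_a\varphi=0$ in $\R^{n+1}_+$ together with the limiting boundary condition $\partial_{\nu^a}\varphi+d\varphi=\lambda_\infty\varphi$ on $\partial\R^{n+1}_+$. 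The main technical obstacle is to upgrade this to $\lambda_\infty=0$. I plan to handle this either by exhibiting a suitable family of compactly supported test functions concentrating near infinity that forces the Rayleigh quotients defining $\lambda_R$ to tend to zero, or, alternatively, by bypassing the eigenvalue approach entirely: solving on each $B_R^+$ the mixed boundary value problem with the homogeneous condition $\partial_{\nu^a}\varphi_R+d\varphi_R=0$ on $\Gamma_R^0$ and prescribed positive Dirichlet data on $\Gamma_R^+$, using stability to obtain a priori bounds, and passing to the limit directly. Once $\lambda_\infty=0$ is secured, $\varphi$ enjoys all the properties required by \eqref{linear}.
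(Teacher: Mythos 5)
Your sufficiency argument is the same as the paper's (multiply $L_a\varphi=0$ by $\xi^2/\varphi$, integrate by parts, use Cauchy--Schwarz), and is fine. The gap is in the necessity direction. Your primary plan --- pass to the limit in the principal eigenfunctions and then ``upgrade'' to $\lambda_\infty=0$ --- cannot work, because $\lambda_\infty=0$ is false in general. Take $d\equiv c>0$ constant: then $Q_R(\xi)\ge c\int_{\Gamma_R^0}\xi^2$ for every admissible $\xi$, so $\lambda_R\ge c$ for all $R$ and $\lambda_\infty\ge c>0$; yet the lemma still asserts a positive solution of the homogeneous problem \eqref{linear} (explicitly, $\varphi(y)=1+\tfrac{c}{1-a}\,y^{1-a}$ works in this example). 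No family of test functions concentrating near infinity will drive the Rayleigh quotient to zero here, so your ``main obstacle'' is structural, not technical: the limit of the eigenfunctions satisfies $\frac{\partial\varphi}{\partial\nu^a}+d\varphi=\lambda_\infty\varphi$, which is the wrong boundary condition whenever $\lambda_\infty>0$.

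The correct route is the one you relegate to an ``alternatively,'' and it is exactly the paper's proof; but as you state it, it is missing its key ingredient. One must first show that $R\mapsto\lambda_R$ is \emph{strictly} decreasing: if $\lambda_{R_1}=\lambda_{R_2}$ with $R_1<R_2$, multiplying $L_a\phi_{R_1}=0$ by $\phi_{R_2}$ and integrating by parts forces $\int_{\Gamma^+_{R_1}}\frac{\partial\phi_{R_1}}{\partial\nu^a}\,\phi_{R_2}=0$, which contradicts the Hopf lemma since $\phi_{R_2}>0$ there. Combined with $\lambda_R\ge0$, strict monotonicity yields $\lambda_R>0$ for every $R$, and this strict positivity is what delivers the two things your sketch glosses over with ``using stability to obtain a priori bounds'': it gives the coercivity $Q_R(\xi)\ge\epsilon_R\int_{B_R^+}y^a|\nabla\xi|^2$ on $H_{0,a}(B_R^+)$, which makes the homogeneous mixed problem with Dirichlet data $\varphi_R=c_R$ on $\Gamma^+_R$ solvable by direct minimization, and it forces $\varphi_R\ge0$ (one checks $Q_R(\varphi_R^-)=0$, which with $\lambda_R>0$ gives $\varphi_R^-\equiv0$, and then Hopf gives strict positivity). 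The final limiting step --- normalize $\varphi_R(0,0)=1$, apply Harnack and the H\"older estimates of Lemma~\ref{regularity1}, extract a subsequence --- is as you describe. Without the strict positivity of $\lambda_R$, however, neither the existence nor the positivity of $\varphi_R$ is justified, so as written the necessity half of your proof does not go through.
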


\noindent
{\it Proof.} 
First, assume the existence of a positive solution
$\varphi$ of \eqref{linear}, as in the statement of the
lemma. Let $\xi\in C^1(\overline{\R^{n+1}_+})$
have compact support in $\overline{\R^{n+1}_+}$.
We multiply $L_a \varphi=0$ by $\xi^2/\varphi$, integrate by
parts and use Cauchy-Schwarz inequality to obtain~\eqref{stablebis}.

For the other implication, we follow \cite{CSM}. 
Assume that \eqref{stablebis}
holds for every  $\xi\in C^1(\overline{\R^{n+1}_+})$
with compact support in~$\overline{\R^{n+1}_+}$.
For every $R>0$, let $\lambda_R$
be the infimum of the quadratic form
\begin{equation}\label{quad}
Q_R(\xi)= \int_{B^+_R} y^{a}|\nabla\xi|^2 +\int_{\Gamma^0_R}
d(x)\xi^2
\end{equation}
among functions in the class $S_R$, defined by
\begin{eqnarray*}
& S_R=  \left\{ \xi\in H^1(B_R^+,y^a )\, : \, \xi\equiv 0 \text{ on } \Gamma^+_R
\text{ and } \int_{\Gamma^0_R} \xi^2 =1\right\}\\
&  \subset H_{0,a}(B_R^+)=\left\{ \xi\in H^1(B_R^+,y^a)\, : \, \xi\equiv 0 
\text{ on } \Gamma^+_R \right\}.
\end{eqnarray*}
We recall that the space $H_{0,a}(B_R^+)$ was already defined in \eqref{defaffine}.

By our assumption, $\lambda_R\ge 0$ for every $R$. 
By definition it is clear 
that $\lambda_R$ is a nonincreasing function of $R$.
Next, we show that $\lambda_R$ is indeed a decreasing
function of~$R$. As a consequence, we deduce that
$\lambda_R >0$ for every $R$, and this will be important in the sequel.

To show that $\lambda_R$ is decreasing in $R$,
note first that since $d$ is assumed to be a bounded function, 
the functional $Q_R$ is bounded below in the class $S_R$. For the
same reason, any minimizing sequence $(\xi_k)$ has
$(\nabla\xi_k)$ uniformly bounded in $L^2(B_R^+,y^a)$. Hence, by the
compact inclusion $H_{0,a} (B_R^+) \subset\subset
L^2(\Gamma_R^0)$ (already mentioned in the proof of Lemma~\ref{existmin}), 
we conclude that the infimum of
$Q_R$ in $S_R$ is achieved by a function $\phi_R\in S_R$.

Moreover,
we may take $\phi_R\ge 0$, since $|\phi|$ is a
minimizer whenever $\phi$ is a minimizer.
Note that $\phi_R \ge 0$ is a solution, not identically zero, of
\begin{equation*} 
\begin{cases}
L_a \phi_R = 0 & \text{ in } B^+_R\\ 
\dfrac{\partial\phi_R}{\partial\nu^a}+d(x)\phi_R = 
\lambda_R\phi_R & \text{ on } \Gamma^0_R \\
\phi_R = 0 & \text{ on } \Gamma^+_R .
\end{cases}
\end{equation*}
It follows from the strong maximum principle that $\phi_R >0$
in $B_R^+$.

We can now easily prove that $\lambda_R$ is decreasing in $R$.
Indeed, arguing by contradiction, assume that $R_1<R_2$ and 
$\lambda_{R_1}=\lambda_{R_2}$. Multiply $L_a \phi_{R_1}=0$ 
by $\phi_{R_2}$, integrate by parts, use the equalities
satisfied by $\phi_{R_1}$ and $\phi_{R_2}$,
and also the assumption $\lambda_{R_1}=\lambda_{R_2}$. We obtain
$$
\int_{\Gamma^+_{R_1}} \frac{\partial\phi_{R_1}}{\partial\nu^a}
\phi_{R_2} =0,
$$
and this is a contradiction since, on $\Gamma^+_{R_1}$, we have 
$\phi_{R_2}>0$ and
the derivative $\partial\phi_{R_1}/\partial\nu^a <0$.

Next, using that $\lambda_R>0$ we obtain
$$
\int_{B^+_R}y^a |\nabla\xi|^2 +\int_{\Gamma^0_R} d(x)\xi^2 \ge
\lambda_R \int_{\Gamma^0_R} \xi^2 \ge -\delta_R \int_{\Gamma^0_R} 
d(x)\xi^2,
$$
for all $\xi\in H_{0,a}(B_R^+)$, where $\delta_R$ is taken
such that  $0< \delta_R \le 
\lambda_R/\Vert d\Vert_{L^\infty}$. From the last inequality,
we deduce that
\begin{equation}\label{coercfirst}
\int_{B^+_R} y^a |\nabla\xi|^2 +\int_{\Gamma^0_R} d(x)\xi^2 \ge
\epsilon_R \int_{B^+_R} y^a |\nabla\xi|^2
\end{equation}
for all $\xi\in H_{0,a}(B_R^+)$, for $\epsilon_R >0$
given by $\epsilon_R =1-1/(1+\delta_R)$.

It is now easy to prove that, 
for every constant $c_R>0$, there
exists a solution $\varphi_R$~of
\begin{equation}\label{prvarR}
\begin{cases}
L_a \varphi_R = 0 & \text{ in }B_R^+ \\
\dfrac{\partial\varphi_R}{\partial\nu^a}+d(x)\varphi_R = 0 &\text{ on }
\Gamma^0_R \\
\varphi_R  =c_R  & \text{ on }\Gamma^+_R .
\end{cases}
\end{equation}
Indeed, rewriting this problem for the function
$\psi_R=\varphi_R-c_R$, we need to solve
\begin{equation*}
\begin{cases}
L_a \psi_R = 0 & \text{ in }B_R^+ \\
\dfrac{\partial\psi_R}{\partial\nu^a}+d(x)\psi_R +c_R d(x) =0 &\text{ on }
\Gamma^0_R \\
\psi_R  =0  & \text{ on }\Gamma^+_R .
\end{cases}
\end{equation*}
This problem can be solved by minimizing the functional
$$
\int_{B^+_R} \frac{y^a}{2}|\nabla\xi|^2 +
\int_{\Gamma^0_R} \left\{ \frac{1}{2} d(x)\xi^2 +
c_Rd(x)\xi\right\}
$$
in the space $H_{0,a}(B_R^+)$.
Note that the functional is bounded below and coercive,
thanks to inequality \eqref{coercfirst}. Finally, 
the compact inclusion $H_{0,a}(B_R^+)\subset\subset L^2(\Gamma_R^0)$
gives the existence of a minimizer. 

Next, we claim that
\begin{equation*}\label{posphiR}
\varphi_R>0 \quad\text{ in } \overline{B_R^+}.
\end{equation*}
Indeed, the negative part $\varphi^-_R$ of $\varphi_R$
vanishes on $\Gamma^+_R$. Using this, \eqref{prvarR}, and the definition
\eqref{quad} of $Q_R$, it is easy to verify that $Q_R(\varphi^-_R)=0$. 
By definition of the first eigenvalue
$\lambda_R$ and the fact that $\lambda_R >0$, this implies that 
$\varphi_R^- \equiv 0$, i.e., $\varphi_R\ge 0$. Now, the Hopf's
maximum principle (Corollary 4.12 of \cite{CS1}) 
gives $\varphi_R>0$ up to the boundary.

Finally, we choose the constant $c_R>0$ in
\eqref{prvarR} to have $\varphi_R(0,0)=1$. 
Then, by the Harnack inequality in Lemma 4.9 of \cite{CS1}
applied to $\varphi_S$ with $S>4R$,  we deduce
$$
\sup_{B_R^+} \varphi_S \le C_R \quad\text{ for all }S>4R.
$$
Now that $(\varphi_S)$ is uniformly bounded in $B_R^+$, we use
\eqref{reg11} in Lemma~\ref{regularity1} 
to get a uniform $C^\beta(\overline{B_{R/2}^+})$ bound for the
sequence. Note that the constant $C_R^1$ in \eqref{reg11} depends on
the $L^\infty$ (and not on the $C^\sigma$) of $d\varphi_S$, which we
already controlled. However, to apply  Lemma~\ref{regularity1} we need to know
that $d\varphi_S$ is $C^\sigma$. This is a consequence of the linear problem solved
by $\varphi_S$ and the fact that $d\varphi_S\in L^\infty$. This leads to
$\varphi_S\in C^\sigma$ as shown in the beginning of the proof
of Lemma~4.5 of \cite{CS1}.

Now, the uniform $C^\beta(\overline{B_{R/2}^+})$ bound gives that
a subsequence of $(\varphi_S)$
converges locally in $\overline{\R^{n+1}_+}$ to a 
$C^\beta_{\rm loc}(\overline{\R^{n+1}_+})$ solution $\varphi>0$ of \eqref{linear}.
\hfill\qed
\medskip

The previous lemma
provides a direct proof of the fact that every 
layer solution $u$ of \eqref{problem} is stable, which was 
already known by the local minimality property established in section~5. 
Indeed, we simply note that 
$\varphi =u_{x_1}$ is strictly
positive and solves the linearized 
problem \eqref{linear}, with $d(x)=-(1+a)^{-1}f'(u(x,0))$. 
Hence, the stability of $u$ 
follows from Lemma~\ref{polipo}.

We use now the previous lemma to establish
a result that leads easily to the monotonicity
and the 1D symmetry of stable solutions in dimensions $n=1$ and $n=2$,
respectively.

\begin{lem}
\label{partials}
Assume that $n\le 2$ and that $u$ is a bounded stable solution of
\eqref{extAlpha}. Then, there exists a H\"older continuous
function $\varphi >0$  in $\overline{\R^{n+1}_+}$ such that,
for every $i=1,\dots,n$,
$$
u_{x_i} =c_i \varphi \quad \text{in }\R^{n+1}_+
$$
for some constant $c_i$.
\end{lem}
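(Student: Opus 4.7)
The plan is to adapt the Alberti-Ambrosio-Cabr\'e ratio argument to the weighted extension setting. Since $u\in C^{2,\beta}$ by Lemma \ref{regNL} and $f\in C^{1,\gamma}$, the function $d(x):=-(1+a)^{-1}f'(u(x,0))$ is bounded and H\"older continuous on $\partial\R^{n+1}_+$, so Lemma \ref{polipo} furnishes a H\"older continuous function $\varphi>0$ in $\overline{\R^{n+1}_+}$, with $\varphi\in H^1_{\rm loc}(\overline{\R^{n+1}_+},y^a)$, solving the linearized problem \eqref{linear}. By the translation invariance of $L_a$ and $\partial_{\nu^a}$ in $x$ together with the $C^{2,\beta}$ regularity of $u$, each $u_{x_i}$ likewise solves this linearized problem, so the ratio $\sigma_i:=u_{x_i}/\varphi$ is well defined and the goal is to prove it is constant.

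Expanding $L_a(\sigma_i\varphi)=L_a\varphi=0$ and cancelling gives the divergence-form equation
\begin{equation*}
\operatorname{div}(y^a\varphi^2\nabla\sigma_i)=0 \quad\text{in }\R^{n+1}_+ ,
\end{equation*}
and subtracting the Neumann boundary conditions for $u_{x_i}$ and $\sigma_i\varphi$ produces $\lim_{y\to0^+} y^a\varphi\,\partial_y\sigma_i=0$ on $\partial\R^{n+1}_+$. For any cutoff $\eta\in C_c^1(\overline{\R^{n+1}_+})$, I would test this equation against $\sigma_i\eta^2$; after integration by parts (the boundary term drops out by the vanishing flux) and Cauchy-Schwarz,
\begin{equation*}
\int_{\R^{n+1}_+} y^a\varphi^2|\nabla\sigma_i|^2\eta^2 \;\le\; 4\int_{\R^{n+1}_+} y^a u_{x_i}^2|\nabla\eta|^2 \;\le\; 4\|u_{x_i}\|_\infty^2 \int_{\R^{n+1}_+} y^a|\nabla\eta|^2.
\end{equation*}

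The conclusion would follow by choosing a family $\eta_R$ with $\eta_R\to 1$ pointwise and $\int y^a|\nabla\eta_R|^2\to 0$ as $R\to\infty$: passing to the limit forces $\nabla\sigma_i\equiv 0$ in $\R^{n+1}_+$, and since $\varphi>0$ is continuous this identifies $\sigma_i$ with a constant $c_i$, yielding $u_{x_i}=c_i\varphi$. The main obstacle is the construction of such $\eta_R$: the weight $y^a$ with $a\in(-1,1)$ makes the effective dimension of the half-space $n+1+a$ sometimes exceed the value $2$ at which standard logarithmic cutoffs cease to have vanishing weighted capacity. The hypothesis $n\le 2$ is precisely what makes it possible to produce such a cutoff—tailored to the weight $y^a$ and exploiting that only a Dirichlet-type (not an $L^2$-type) quantity of $\eta_R$ appears on the right-hand side—and this dimensional restriction reflects the analogous limitation of the classical Ghoussoub-Gui argument for the interior Laplacian in $\R^2$.
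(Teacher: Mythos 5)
Your setup coincides with the paper's: Lemma~\ref{polipo} applied with $d(x)=-(1+a)^{-1}f'(u(x,0))$ gives $\varphi>0$, the ratio $\sigma_i=u_{x_i}/\varphi$ satisfies $\textrm{div\,}(y^a\varphi^2\nabla\sigma_i)=0$ with vanishing conormal flux on $\{y=0\}$, and the Caccioppoli inequality $\int y^a\varphi^2|\nabla\sigma_i|^2\eta^2\le 4\int y^a u_{x_i}^2|\nabla\eta|^2$ is exactly the first step of the standard Liouville argument. The gap is in the final step. A family of cutoffs $\eta_R\to1$ with $\int_{\R^{n+1}_+}y^a|\nabla\eta_R|^2\to0$ exists only when the weighted capacity of large balls vanishes, i.e.\ when the effective dimension $n+1+a$ is at most $2$, which means $n+a\le1$. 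Even the logarithmic cutoff gives $\int y^a|\nabla\eta_R|^2\approx(\log R)^{-2}\int_R^{R^2}r^{n-2+a}\,dr$, which blows up whenever $n+a>1$. For $n=2$ this is the case for every $a\in(-1,1)$, and for $n=1$ it already fails when $a>0$ (i.e.\ $s<1/2$). So your closing claim that ``$n\le2$ is precisely what makes it possible to produce such a cutoff'' is false, and the argument as written does not cover the main case of the lemma.

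What $n\le 2$ actually buys is different: it yields the \emph{quadratic growth} bound $\int_{B_R^+}y^a|\nabla u|^2\le C(R^{n-2s}+R^n)\le CR^2$, obtained by testing $\textrm{div\,}(y^a\nabla u)=0$ against $\xi^2u$ with a standard $|\nabla\xi|\le 2/R$ cutoff (this step is missing from your proposal). With that bound in hand, since $\varphi\sigma_i=u_{x_i}$ one has $\int_{B_R^+}y^a(\varphi\sigma_i)^2\le CR^2$, and the conclusion follows from the Liouville-type theorem for $\textrm{div\,}(y^a\varphi^2\nabla\sigma)=0$ under quadratic growth (Theorem~4.10 of \cite{CS1}). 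That theorem is not a one-shot capacity estimate: it applies your Caccioppoli inequality with $|\nabla\eta|\le C/R$ to conclude $\int y^a\varphi^2|\nabla\sigma_i|^2<\infty$ globally, and then reuses the inequality on annuli, where the right-hand side is controlled by the tail of this convergent integral and hence tends to zero. You should either invoke that theorem directly or reproduce its two-step iteration; the single-cutoff route cannot be repaired for $n=2$.
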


\begin{proof}
Since $u$ is assumed to be a stable solution, then \eqref{stablebis}
holds with $d(x):=-(1+a)^{-1}f'(u(x,0))$. 
Note that $d\in C^\beta$ by Lemma~\ref{regularity1}.
Hence, by Lemma~\ref{polipo}, there exists a H\"older continuous function
$\varphi >0$ in $\overline{\R^{n+1}_+}$ such that
\begin{equation*} \label{linearf}
\begin{cases}
L_a \varphi = 0&\text{ in } \R^{n+1}_+\\ 
\dfrac{\partial\varphi}{\partial\nu^a }-(1+a)^{-1}f'(u(x,0))\varphi = 0&\text{ on }
\partial\R^{n+1}_+ .
\end{cases}
\end{equation*}

For $i=1,\ldots,n$ fixed, consider the function
$$
\sigma =\frac{u_{x_i}}{\varphi}.
$$
The goal is to 
prove that $\sigma$ is constant in $\R^{n+1}_+$.

Note first that 
$$
\varphi^2 \nabla \sigma =\varphi \nabla u_{x_i}-
u_{x_i}\nabla \varphi.
$$
Thus, we have that
$$
\textrm{div\,}(y^a \varphi^2 \nabla\sigma)=0 \quad\; \text{in }\R^{n+1}_+ .
$$
Moreover, we have $\frac{\partial \sigma}{\partial \nu^a}=0$ on
$\partial\R^{n+1}_+$ since
$$
\varphi^2 \sigma_y =\varphi u_{y x_i}-
u_{x_i}\varphi_{y} =0
$$
due to the fact that $u_{x_i}$ and
$\varphi$ both satisfy the same linearized boundary condition.

We can use the Liouville property that we established in \cite{CS1}
(Theorem~4.10 of \cite{CS1}),
and deduce that $\sigma$ is constant, provided
that the growth condition
\begin{equation}\label{quadratic}
\int_{B_R^+} y^a (\varphi\sigma)^2  \leq CR^2
\quad\quad\text{ for all } R>1 
\end{equation}
holds for some constant $C$ independent of $R$. 
But note that
$\varphi\sigma=u_{x_i}$, and therefore
$$
\int_{B_R^+} y^a (\varphi\sigma)^2 \le
\int_{B_R^+} y^a |\nabla u|^2 .
$$
Thus, we need to estimate this last quantity.

To do this, we perform a simple energy estimate. Multiply the equation
$\textrm{div\,}(y^a\nabla u)=0$ by
$\xi^2 u$ and integrate in $B_{2R}^+$, where $0\leq \xi\leq 1$ is a $C^\infty$ cutoff
function with compact support in $B_{2R}$ such that $\xi\equiv 1$ 
in $B_{R}$ and $|\nabla \xi|\le 2/R$. We obtain 
\begin{eqnarray*}
&&  \int_{B_{2R}^+}
y^a\, \{  \xi ^2|\nabla u |^2+2 \xi u \nabla \xi \cdot \nabla u \}=
\int_{\Gamma^0_{2R}} (1+a)^{-1}f(u) \xi^2u. 
\end{eqnarray*} 
Thus, by Cauchy-Schwarz inequality and since $u$ and $\xi$ are bounded,
$$
\int_{B_{2R}^+}y^a\,\xi^2|\nabla u|^2
\le \frac 12 \int_{B_{2R}^+}y^a\,\xi^2|\nabla u|^2
+C \int_{B_{2R}^+} y^a |\nabla \xi|^2
+C |\Gamma^0_{2R}|
$$
for a  constant $C$ independent of $R$. Absorbing the first term on the left hand side,
using that $\xi\equiv 1$ in $B_R$ and $|\nabla \xi|\le 2/R$,
and computing $\int_0^{2R} y^a\, dy$, we deduce
\begin{equation*}
\int_{B_{R}^+}y^a\,|\nabla u|^2 \leq C \{R^{-2}R^n R^{1+a}+R^n\}
=C \{R^{n-2s}+R^n\} \leq C R^2
\end{equation*}
since $n\leq 2$. This establishes \eqref{quadratic} and finishes the proof.
\end{proof}

We can now give the

\medskip
\noindent
{\it Proof of Theorem}~\ref{symNL}.
Let $n=2$. The extension $u$ of $v$ is a bounded stable solution of \eqref{extAlpha}
with $f$ replaced by $(1+a)d_s^{-1}f$. 

Lemma~\ref{partials} establishes that
$u_{x_i}\equiv c_i \varphi$ for some constants $c_i$, for $i=1,2$.
If $c_1=c_2=0$, then $u$ is constant. 
Otherwise we have that
$c_2u_{x_1}-c_1u_{x_2}\equiv 0$ and we conclude that
$u$ depends only on
$y$ and on the variable parallel to $(0,c_1,c_2)$.
That is,
$$
u(x_1,x_2,y)=u_0\left( c_1x_1+c_2x_2)/(c_1^2+c_2^2)^{1/2},y\right) =
u_0(z,y),
$$
where $z$ denotes the variable parallel
to $(0,c_1,c_2)$. We have that $u_0$ is a solution of the same nonlinear
problem now for $n=1$ thanks to the extension characterization; recall
that the constant $d_s$ in \eqref{cttNeumann} does not depend on the dimension.

In particular
$\partial_xu_0 = (c_1^2+c_2^2)^{1/2}\varphi$,
and hence $\partial_xu_0 >0$ everywhere. 
This finishes the proof of the theorem.
\hfill\qed

\section{Layer solutions in $\RR$}

This section is devoted to the case $n=1$. The Modica estimate 
that we proved in \cite{CS1} (see Theorems~\ref{modthm} and \ref{necLayers} above) gave that 
$$G>G(-1)=G(1)\,\text{ in }(-1,1)$$ 
is a  necessary condition for the existence of a layer solution in $\RR$.
Note the strict inequality in $G>G(\pm 1)$. 

The rest of the section is dedicated to prove
the existence of a layer solution under the above condition on $G$, 
in addition to $G'(-1)=G'(1)=0$, as stated 
in Theorem~\ref{existNonlocal}.
The existence part of Theorem~\ref{existNonlocal} is all contained in the following lemma.

\begin{lem}\label{keyexist}
Assume that $n=1$, and that
$$
G'(-1) = G'(1) = 0 \quad\text{ and }\quad G> G(-1)=G(1)\ \text{ in }
(-1,1).
$$
Then, for every $R>0$, there exists a function
$u_R\in C^\beta(\overline{B_R^+})$ for some $\beta \in (0,1)$ independent of $R$, such that 
$$
-1<u_R< 1 \quad\text{in }\overline{B_R^+},
$$ 
$$
u_R(0,0) = 0,
$$  
$$
\partial_x u_R \ge 0\quad \text{in } B_R^+ ,
$$
and $u_R$ is a minimizer of the energy in $B_R^+$, in the sense that
$$
E_{B_R^+}(u_R)\le E_{B_R^+}(u_R+\psi)
$$
for every $\psi\in C^1(\overline{B_R^+})$ with compact support in 
$B_R^+\cup\Gamma_R^0$ and such that $-1\le u_R+\psi\le 1$ in $B_R^+$. 

Moreover, as a consequence of the previous statements, 
we will deduce that a subsequence 
of $(u_R)$ converges in $C^\beta_{\rm loc}(\overline{\R^2_+})$
to a layer solution $u$  
of \eqref{extAlpha}.
\end{lem}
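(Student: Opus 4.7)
\medskip
\noindent
\textbf{Proof plan.} The plan is to adapt Lemma~5.3 of \cite{CSM} to the fractional setting, using the Hopf lemma and weighted regularity from \cite{CS1} together with the explicit layer of Theorem~\ref{examen}. For the Dirichlet datum on $\Gamma_R^+$ I will take the Poisson extension (under $L_a$) of the translated explicit layer $v_s^1(\cdot - \tau)$, denoted $U_\tau$; it is smooth, strictly increasing in $x$, takes values in $(-1,1)$, and belongs to the right weighted Sobolev space. Since $G'(\pm 1)=0$ forces $f(\pm 1)=0$, hypothesis \eqref{subsuper} of Lemma~\ref{existmin} will hold and produce, for each $R$ and $\tau$, an absolute minimizer $w_{R,\tau}\in \mathcal{C}_{U_\tau,a}(B_R^+)$ of $E_{B_R^+}$; by Lemma~\ref{regularity1} and the strong maximum principle of \cite{CS1} this $w_{R,\tau}$ will be $C^\beta(\overline{B_R^+})$ with $\beta$ uniform in $R$ and $\tau$, and will satisfy $-1<w_{R,\tau}<1$. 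I will then apply the sliding method to $w_{R,\tau}$ (using translation invariance of $L_a$ in $x$, the Hopf lemma of \cite{CS1}, and the strict monotonicity of $U_\tau(\cdot,0)=v_s^1(\cdot-\tau)$) to deduce $\partial_x w_{R,\tau}>0$ in $\overline{B_R^+}$. An intermediate-value argument in $\tau$ --- $w_{R,\tau}(0,0)\to 1$ as $\tau\to -\infty$ and $\to -1$ as $\tau\to +\infty$, combined with compactness/semicontinuity of the minimizer set in $\tau$ --- will produce $\tau_R$ with $w_{R,\tau_R}(0,0)=0$. Setting $u_R:=w_{R,\tau_R}$ will deliver the function claimed in the lemma, with all the stated properties.

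\emph{Limit and identification as a layer.} For the ``moreover'' part of the statement, the uniform $C^\beta(\overline{B_K^+})$ bounds from Lemma~\ref{regularity1} (valid once $R\ge K$) together with Arzel\`a--Ascoli and a diagonal subsequence extraction will give $u_R\to u$ in $C^\beta_{\rm loc}(\overline{\R^2_+})$ to a bounded solution $u$ of \eqref{extAlpha} with $|u|\le 1$, $u(0,0)=0$, $\partial_x u\ge 0$. Lower semicontinuity of the weighted Dirichlet term and uniform convergence of the potential term on compact subsets of $\partial\R^2_+$ will propagate the local minimality of the $u_R$'s to $u$, so that $u$ is a local minimizer of \eqref{extAlpha} relative to perturbations in $[-1,1]$. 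By monotonicity the limits $L^\pm:=\lim_{x\to\pm\infty}u(x,0)$ will exist and satisfy $L^-\le 0\le L^+$; Proposition~\ref{sameheight} will yield $G(L^-)=G(L^+)\le G$ on $[-1,1]$, and combined with the standing hypothesis $G>G(-1)=G(1)$ on $(-1,1)$ this forces $L^\pm\in\{-1,1\}$, hence $L^-=-1$ and $L^+=1$. Strict monotonicity $u_{x_1}>0$ will follow from the strong maximum principle and Hopf lemma applied to the linearized problem $L_a(u_{x_1})=0$, $\partial_{\nu^a}u_{x_1}=(1+a)^{-1}f'(u)u_{x_1}$, after ruling out $u_{x_1}\equiv 0$ (otherwise $u=h(y)$ and $(y^a h')'=0$ together with $|u|\le 1$ would force $h$ constant, contradicting $L^\pm=\pm 1$). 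This will complete the identification of $u$ as a layer solution.

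\emph{Hardest point.} The most technically delicate step will be the intermediate-value selection of $\tau_R$ in the first paragraph, which requires a semicontinuity statement for the set-valued map from boundary data to the set of minimizers; this should reduce to $L^\infty$-stability of absolute minimizers under uniform perturbations of the boundary data, which in turn follows from the compactness provided by Lemma~\ref{regularity1}. Conceptually the crux will lie in the identification $L^\pm=\pm 1$, where the strict inequality $G>G(\pm 1)$ in the hypothesis is invoked via Proposition~\ref{sameheight} to rule out any other limiting values of $u$ at infinity.
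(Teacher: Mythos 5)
Your overall architecture (minimize with monotone increasing boundary data, prove monotonicity by sliding, normalize so that the solution vanishes at the origin, pass to the limit and identify $L^\pm=\pm1$ via Proposition~\ref{sameheight}) matches the paper's, and your limit/identification paragraph is essentially correct. But the two key technical steps contain genuine gaps. First, the sliding method does not run in the half-ball $B_R^+$ with Dirichlet datum $U_\tau$ taking values strictly inside $(-1,1)$. To slide, one compares $w(\cdot+h,\cdot)$ with $w$ on the lens $B_R^+\cap(B_R^+-he_1)$, and on the part of its boundary coming from $\Gamma_R^+$ one must know that \emph{interior} values of $w_{R,\tau}$ are dominated by \emph{boundary} values of $U_\tau$ at points further to the right (a Berestycki--Nirenberg-type compatibility condition). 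Since $f=-G'>0$ near $1$ for a generic double-well $G$, the nonlinear Neumann condition on $\Gamma_R^0$ can push $w_{R,\tau}$ above $\sup_{\Gamma_R^+}U_\tau$, so this comparison is simply unavailable. The paper avoids this by minimizing in the long rectangle $Q_R^+=(-R,R)\times(0,R^{1/8})$ with datum $v^R(x)=\arctan x/\arctan R$, whose lateral values are exactly $\mp1$; then $|u^R|\le1$ makes the boundary comparison automatic and the sliding closes.

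Second, the intermediate-value selection of $\tau_R$ is not justified and, as stated, is false in general: absolute minimizers of a nonconvex functional do \emph{not} depend continuously on the boundary data, and upper semicontinuity of the set of minimizers (which is all that compactness gives you) does not imply that the set-valued map $\tau\mapsto\{w_{R,\tau}(0,0)\}$ attains intermediate values --- it can jump from near $1$ to near $-1$ at a critical $\tau$ without ever taking the value $0$. The paper gets the normalization by a completely different and more robust route: quantitative energy estimates ($E_{Q_R^+}(u^R)\le CR^{1/4}$, obtained by testing with $v^R$) force $|\{u^R(\cdot,0)>1/2\}|\ge R^{3/4}$ and $|\{u^R(\cdot,0)<-1/2\}|\ge R^{3/4}$, so the monotone continuous trace $u^R(\cdot,0)$ has a zero $x_R$ at distance at least $R^{3/4}$ from the lateral edges; translating and restricting to $B^+_{R^{1/8}}(x_R,0)$ then yields $u_S$ with $u_S(0,0)=0$. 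Your proposal contains no quantitative estimate of this kind, so even a fallback ``find the zero in $x$ and recenter'' argument would not guarantee that the zero sits far enough inside to leave a half-ball of the required radius. Both steps need to be reworked along the paper's lines (or with a genuinely new argument) before the proof is complete.
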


\begin{proof}
For $R >1$, let
$$
Q_R^+ = (-R,R) \times (0,R^{1/8}).
$$
Consider the function 
$$
v^R(x,y) =v^R(x)= 
\frac{\arctan x}{\arctan R} \quad \text{ for } (x,y)\in 
\overline{Q_R^+}.
$$
Note that $-1\le v^R \le 1$ in $Q_R^+$.

Let $u^R$ be an absolute minimizer of $E_{Q_R^+}$ in the
set of functions $v \in H^1(Q_R^+,y^a)$ such that $|v|\le 1$ 
in $Q_R^+$ and $v \equiv v^R$ on $\partial^+Q_R^+$ in the weak sense.
Since we are assuming $G'(-1)=G'(1)=0$, the existence of such minimizer
was proved in Lemma \ref{existmin}. We have that
$u^R$ is a weak solution of
\begin{equation*}
\begin{cases}
L_a u^R=0&\text{ in } Q_R^+\\ 
(1+a)\dfrac{\partial u^R}{\partial\nu^a} =f(u^R)&\text{ on }\partial^0 Q_R^+\\
u^R=v^R &\text{ on }\partial^+ Q_R^+,
\end{cases}
\end{equation*}
and, by the strong maximum principle and the Hopf's lemma (Corollary~4.12 of \cite{CS1}),
$$
\abs{u^R} < 1 \quad \text{in } \overline{Q_R^+}.
$$
The function $u^R$ is H\"older continuous by Lemma~\ref{regularity1}.

We follow the method developed in \cite{CSM} and proceed in three steps.
First we show:
\begin{equation}\label{enerur1}
\text{Claim 1:} \quad E_{Q_R^+}(u^R) \leq C R^{1/4}
\end{equation}
for some constant $C$ independent of $R$. Here we take
$G-G(-1)=G-G(1)$ as boundary energy potential.  We will
use this energy bound to prove in a second step that,
for $R$ large enough, 
\begin{equation}\label{sets1}
\text{Claim 2:} \quad\abs{\{u^R(\cdot,0) > 1/2\}} \geq R^{3/4} 
\text{ and } \abs{\{u^R(\cdot,0) <-1/2\}} \geq R^{3/4}.
\end{equation}
Finally, in a third step independent of the two previous ones, 
we prove that
\begin{equation}
\label{lastmon}
\text{Claim 3:} \quad
u_x^R = \partial_x u^R \ge 0 \quad \text{ in } Q_R^+.
\end{equation}

With the above three claims, we can easily finish the proof of the lemma,
as follows.
Since $u^R(\cdot,0)$ is nondecreasing (here, this is a key point)
and continuous in $(-R,R)$, we
deduce from \eqref{sets1} that for $R$ large enough,
\begin{equation*}
u^R(x_R,0) = 0 \quad \text{for some } x_R \text{ such that } \abs{x_R} \le
R - R^{3/4} .
\end{equation*}
Since $\abs{x_R} \leq R - R^{3/4} < R - R^{1/8}$, we have that
$$
\overline{B^+_{R^{1/8}}}(x_R,0) \subset (-R,R) \times [0,R^{1/8}]
\subset \overline{Q_R^+}.
$$
We slide $u^R$ and define
$$
u_{R^{1/8}}(x,y) = u^R(x+x_R,y) \quad \text{for}\quad (x,y)\in 
\overline{B^+_{R^{1/8}}}(0,0).
$$
Then, relabeling the index by setting $S=R^{1/8}$, we have that $u_S\in
C^\beta(\overline{B^+_S}(0,0))$,
$-1<u_S < 1$ in $\overline{B^+_S}(0,0)$, 
$u_S(0,0) = 0$, and $\partial_x u_S \ge 0$ in
$B_S^+(0,0)$. Moreover, $u_S$ is a minimizer in $B^+_S(0,0)$ 
in the sense of Lemma~\ref{keyexist}. This follows from extending
a given $H^1$ function $\psi$ with compact support in
$(B_S^+\cup \Gamma^0_S)(x_R,0)$, and with $|u+\psi|\le 1$ in 
$B_S^+(x_R,0)$,
by zero in $Q_R^+\setminus B_S^+(x_R,0)$. Hence $\psi$ is a
$H^1(Q_R^+)$ function. Then one uses the minimality of $u^R$
in $Q_R^+$ and the fact that the energies of $u^R$ and $u^R+\psi$ 
coincide in $Q_R^+\setminus B_S^+(x_R,0)$ to deduce the desired
relation between the energies in $B_S^+(x_R,0)$.

Now we prove the last statement of the lemma: a subsequence of $(u_R)$
converges to a layer solution. Note that we use the sequence $(u_R)$ 
just constructed, and not the sequence $(u^R)$ in the beginning of the
proof.

Let $S>0$. Since $\abs{u_R} < 1$, Lemma~\ref{regularity1} gives 
$C^\beta(\overline{B_S^+})$ estimates for $u_R$, uniform
for $R\geq 2S$.
Hence, for a subsequence (that we still denote by $u_R$), we
have that $u_R$ converges locally uniformly as $R\to \infty$ 
to some function
$u \in C^\beta_{\rm loc}(\overline{\R^2_+})$. By the additional bound \eqref{reg12} on 
$y^a u_y$ given by Lemma~\ref{regularity1}, one can pass to the limit in the 
weak formulation and $u$ weakly solves \eqref{extAlpha}

We also have that $|u|\le 1$,
$$
u(0,0) =0 \quad \text{ and }\quad u_x \geq 0 \text{ in }\R^2_+.
$$
Since $u(0,0)=0$, we have $|u|\not\equiv 1$ and hence $|u|<1$ in $\overline{\R^2_+}$, 
by the strong maximum principle and Hopf's lemma.
Note that $\pm 1$ are solutions of the problem since, by hypothesis,
$G'(\pm 1)=f(\pm 1)=0$.

Let us now show that $u$ is a local minimizer
relative to perturbations in $[-1,1]$.
Indeed, let $S>0$ and 
$\psi$ be a $C^1$ function with compact support in
$B_S^+\cup \Gamma^0_S$ and such that $|u+\psi|\le 1$ in $B_S^+$. 
Extend $\psi$ to
be identically zero outside $B_S^+$, so that $\psi\in
H^1_{\rm loc}(\overline{\R^2_+})$.
Note that, since $-1<u<1$ and $-1\le u+\psi\le 1$, we have 
$-1<u+(1-\epsilon)\psi <1$ in $\overline{B_S^+}$ for every $0<\epsilon<1$.
Hence, by the local convergence of $(u_R)$ towards $u$, 
for $R$ large enough we have $B_S^+\subset B_R^+$ and
$-1 \le u_R+(1-\epsilon)\psi \le 1$ in $B_S^+$, 
and hence also in $B_R^+$. 
Then, since $u_R$ is a minimizer in $B_R^+$, we have
$E_{B_R^+}(u_R) \leq E_{B_R^+}(u_R + (1-\epsilon)\psi)$ 
for $R$ large. Since $\psi$ has support in
$B_S^+\cup \Gamma^0_S$, this is equivalent to 
$$
E_{B_S^+}(u_R) \leq E_{B_S^+}(u_R + (1-\epsilon)\psi) 
\quad \text{for $R$ large}.
$$
Letting $R \to \infty$, we deduce $E_{B_S^+}(u) \leq
E_{B_S^+}(u +(1-\epsilon) \psi)$. We conclude now
by letting $\epsilon\to 0$.

Finally, since $u_x\ge 0$, the limits 
$L^{\pm}=\lim_{x\to\pm\infty}u(x,0)$ exist. To establish that
$u$ is a layer solution, it remains only to prove
that $L^\pm=\pm 1$. For this, note that we can 
apply Proposition~\ref{sameheight} to  $u$, a local minimizer
relative to perturbations in $[-1,1]$,
and deduce that
$$
G\ge G(L^-)=G(L^+) \quad\text {in } [-1,1].
$$
Since in addition $G>G(-1)=G(1)$ in $(-1,1)$ by hypothesis, we infer that $|L^\pm|=1$. 
But $u(0,0)=0$ and thus $u$ cannot be identically $1$ or $-1$.
We conclude that $L^-=-1$ and $L^+=1$, and therefore
$u$ is a layer solution.

We now go back to the functions $u^R$ defined in the beginning of the
proof, and proceed to establish the three claims made above.

\smallskip
{\it Step}~1. Here we prove \eqref{enerur1}
for some constant $C$ independent of $R$. We take
$G-G(-1)=G-G(1)$ as boundary energy potential. 

Since $E_{Q_R^+}(u^R) \leq E_{Q_R^+} (v^R)$, we simply 
need to bound the energy of $v^R$.
We have 
$$
\abs{\nabla v^R} = \abs{\partial_x v^R} =
\frac1{\arctan R}\  \frac1{1+x^2} \leq C \frac1{1+x^2},
$$ 
and hence
$$
\int_{Q_R^+} y^a \abs{\nabla v^R}^2 \leq CR^{\frac{1+a}{8}} \int_{-R}^R
\frac{dx}{(1+x^2)^2} \leq CR^{1/4} 
$$
since $0 < 1+a < 2$.

Next, since $G\in C^{2,\gamma}$, $G'(-1) = G'(1) = 0$ and $G(-1) = G(1)$, we have that 
$$
G(s) -G(1) \leq C(1 + \cos(\pi s)) \quad\text {for all }s\in
[-1,1],
$$ 
for some constant $C>0$.
Therefore, using that $\pi/\arctan R>2$, we have 
\begin{equation*}
\begin{split}
G(v^R(x,0))-G(1) 
&\le C\left\{ 1+\cos \Big( \pi \frac{\arctan x}{\arctan R}\Big)\right\} \\
&\le C\big( 1+\cos (2\arctan x)\big) \\
&= C2 \cos^2 (\arctan x) = \frac{2C}{1+x^2}.
\end{split}
\end{equation*}
We conclude that
$$
\int_{-R}^R \{G(v^R(x,0))-G(1)\}\, dx \leq C
\int_{-R}^R\frac{dx}{1+x^2} \leq C.
$$
This, together with the above bound for the Dirichlet energy, 
proves \eqref{enerur1}.

\smallskip
{\it Step}~2. Here we prove \eqref{sets1} for $R$ large enough.

Since $u^R \equiv v^R$ on $\{y = R^{1/8}\}$ and $\int_{-R}^R 
v^R(x) \, dx = 0$, we have
\begin{equation*}
\int_{-R}^R u^R(x,0) \, dx  = \int_{-R}^R u^R (x,0) \, dx - \int_{-R}^R
u^R(x,R^{1/8}) \, dx = -\int_{Q_R^+} u_y^R.
\end{equation*}
The energy bound \eqref{enerur1} and the
hypothesis that
$G-G(1) \geq 0$ give that the Dirichlet energy alone also satisfies the bound in \eqref{enerur1}.
We use this together with the previous equality and Cauchy-Schwarz inequality (writing $|u^R_y|= 
y^{-a/2} y^{a/2} |u^R_y|$), to deduce
\begin{equation} \label{intur}
\begin{split}
\Big|\int_{-R}^R u^R(x,0) dx\Big| & \leq \int_{Q_R^+} |u_y^R| 
   \leq \bigg\{ \int_{Q_R^+} y^{-a}\cdot \int_{Q_R^+} y^a \abs{\nabla
    u^R}^2\bigg\}^{1/2} \\
  & \leq C \Big\{ RR^{(1-a)/8}R^{1/4}\Big\}^{1/2}  \leq CR^{3/4},
\end{split}
\end{equation}
since $0 < 1-a < 2$.

Next, by \eqref{enerur1} we know that 
$\int_{-R}^R \{ G(u^R(x,0))-G(1)\} \, dx\le CR^{1/4}\leq
CR^{3/4}$. On the other hand,
$G(s)-G(1) \geq \varepsilon > 0$ if $s \in [-1/2, 1/2]$, for some
$\varepsilon > 0$ independent of $R$. Moreover, $G-G(1) \geq 0$ 
in $(-1,1)$. We deduce
$$
\varepsilon \big|\{\abs{u^R(\cdot,0)} \leq 1/2\}\big| \leq \int_{-R}^R 
\{ G(u^R(x,0))-G(1) \}\, dx \leq CR^{3/4},
$$
and therefore  $\big|\{\abs{u^R(\cdot,0)} \leq 1/2\}\big| \leq  CR^{3/4}$.
This combined with \eqref{intur} leads to
\begin{equation} \label{secintur}
\Big| \int_{(-R,R) \cap \{\abs{u^R(\cdot,0)} > 1/2\}} 
u^R(x,0)\, dx \Big|  \leq  CR^{3/4}.
\end{equation}

We claim that 
$$
\abs{\{u^R(\cdot,0) > 1/2\}} \geq R^{3/4} \quad\text{ for $R$ large
enough}.
$$
Suppose not.
Then, using \eqref{secintur} and  $\abs{\{u^R(\cdot,0) > 1/2\}} \leq R^{3/4}$,
we obtain
$$
\displaystyle \frac12\abs{\{u^R(\cdot,0) < -1/2\}} 
\leq  \Big| \displaystyle \int_{(-R,R)\cap \{u^R(\cdot,0) < -1/2\}} 
 u^R(x,0)\, dx \Big|\le CR^{3/4}.
$$
Hence, all the three sets $\{|u^R(\cdot,0)| \leq 1/2\}$, $\{u^R(\cdot,0) >
1/2\}$, and $\{u^R(\cdot,0) < -1/2\}$ would have length smaller than
$CR^{3/4}$. This is a contradiction for $R$ large, since these
sets fill $(-R,R)$.

\smallskip
{\it Step}~3. Here we establish the monotonicity result \eqref{lastmon}.
This is done exactly as in Step~3 in the proof in \cite{CSM}, to which we refer.
One simply uses the sliding method with the aid of the Hopf boundary lemma of \cite{CS1}. 
\end{proof}

\medskip
\noindent
{\it Proof of Theorem}~\ref{existNonlocal}.
The necessary conditions on $G$ follow from our previous paper \cite{CS1};
see Theorem~\ref{necLayers} above.

That the conditions are sufficient for the existence of a layer $v=v(x)$
follows from Lemma~\ref{keyexist}, which gives a layer solution $u=u(x,y)$
of the corresponding nonlinear extension problem \eqref{extAlpha} and then by taking
$v:=u(\cdot,0)$. Note that we consider the extension problem with $f$ 
replaced by $(1+a)d_s^{-1}f$ due to the relation \eqref{cttNeumann} between the
fractional Laplacian and the Neumann derivative.

Finally, the proof of the uniqueness result follows exactly that of 
Lemma~5.2 in \cite{CSM} for the half-Laplacian. It uses the sliding method
combined with the maximum principle Lemma 4.13 and Remark 4.14 in our previous paper \cite{CS1}.  
\hfill\qed

\medskip
\noindent
{\it Proof of Theorem}~\ref{classif}.
The proof is identical to that of Proposition~6.1 in \cite{CSM}, page 1727.
\hfill\qed

\bibliographystyle{plain} 
\bibliography{bibliofileCSB}

\end{document}